\newlength{\rulebreite}
\def\timesover#1#2#3{\ \xymatrix@1@=0pt@M=0pt{ _{#1}&\times&_{#2} \\& ^{#3}&}\ }
\def\otimesover#1#2#3{\ \xymatrix@1@=0pt@M=0pt{ _{#1}&\otimes&_{#2} \\& ^{#3}&}\ }
\theoremstyle{definition}
\newtheorem{thm}{Theorem}
\newtheorem{lem}[thm]{Lemma}
\newtheorem{cor}[thm]{Corollary}
\newtheorem{prop}[thm]{Proposition}
\theoremstyle{definition}
\newtheorem{defn}[thm]{Definition}
\newtheorem{rmk}[thm]{Remark}
\newtheorem{ex}[thm]{Example}
\numberwithin{thm}{subsection}
\numberwithin{equation}{subsection}
\newcommand{\e}{\epsilon}
\newcommand{\Spec}{{\rm Spec \,}}
\newcommand{\sF}{{\mathcal F}}
\newcommand{\sG}{{\mathcal G}}
\newcommand{\sH}{{\mathcal H}}
\newcommand{\sI}{{\mathcal I}}
\newcommand{\sP}{{\mathcal P}}
\newcommand{\A}{{\mathbb A}}
\newcommand{\C}{{\mathbb C}}
\newcommand{\F}{{\mathbb F}}
\renewcommand{\P}{{\mathbb P}}
\newcommand{\Q}{{\mathbb Q}}
\newcommand{\R}{{\mathbb R}}
\newcommand{\Z}{{\mathbb Z}}
\renewcommand{\bar}{\overline}
\def\tilde{\widetilde}
\numberwithin{equation}{subsection}
\begin{document}

\title{The (non-uniform) Hrushovski-Lang-Weil estimates}
\author{K. V. Shuddhodan}
\email{kvshud@ihes.fr}
\address{Institut des Hautes \'Etudes Scientifiques \\ Le Bois-Marie 35 rte de Chartres\\ 91440 Bures-sur-Yvette France}
\subjclass[2010]{14F05,~14F43,~14F45,~14G15,~14G17}

\begin{abstract}
In this article, we obtain a non-uniform version of Hrushovski's generalization of the Lang-Weil estimates using $\ell$-adic methods and without recourse to alterations.~Our method also implies that the associated generating function is rational.~Along the way we obtain a bound for the local terms for a class of non-proper correspondence which could be of independent interest.
\end{abstract}

\maketitle

\begingroup
\hypersetup{linkcolor=black}
\tableofcontents
\endgroup

\section{Introduction}

Let $k$ be an algebraically closed field of characteristic $p>0$.~For any $p$-primary number $q=p^r$,~and any scheme $X/k$,~~by $X^{(q)}$ we mean the base change of $X/k$ along the $r^{\mathrm{th}}$ iterate of the absolute Frobenius of $k$.~Let $F^{(q)}_{X/k}$ be the induced relative Frobenius morphism from $X$ to $X^{(q)}$.


Now suppose $X$ is a closed subvariety of $\A^n_k$.~Consider a \textit{correspondence} between $X$ and $X^{(q)}$ given by a closed subvariety $C$ of $X \times_k X^{(q)}$.~Let $c_1$ and $c_2$ denote the projections from $C$ to $X$ and $X^{(q)}$ respectively.~Suppose that $c_1$ and $c_2$ are dominant,~and at least one of them is quasi-finite. 

Note that $X$ and $X^{(q)}$ have natural compactifications (say $\bar{X}$ and $\bar{X^{(q)}}$ respectively) inside $\P^n_k$.~Similarly $C$ has a natural compactification (say $\bar{C}$) inside $\P^{n}_k \times \P^n_k$ which in turn can be embedded inside $\P^{n^2+2n}_k$ (the Segre embedding).~Let $\Delta^{(q)}_{X/k}$ be the graph of $F^{(q)}_{X/k}$ considered as a subscheme of $X \times_k X^{(q)}$.

Combining techniques from model theory and intersection theory, Hrushovski proved the following generalisation of the Lang-Weil estimates (\cite{Lang_Weil},~Theorem 1).

\begin{thm}[Hrushovski-Lang-Weil estimates]\label{Hrushovski_Lang_Weil}(\cite{Hrushovski},~Theorem 1.1 (1))

Let $n, d_1$ and $d_2$ be nonnegative integers.~There exists an integer $M(n,d_1,d_2)$ depending only on $n$,~$d_1$ and $d_2$ satisfying the following properties.

\begin{enumerate}

\item For any choice of $p, q, k, n, X$ and $C$ as above with $\text{deg}(\bar{X}) \leq d_1$,~$\text{deg}(\bar{C}) \leq d_2$ and $q \geq M(n,d_1,d_2)$,~the schematic intersection $C \cap \Delta^{(q)}_{X/k}$ is finite.

\item Moreover we have the following bound for the number of points in the intersection

\begin{equation}
|\#C \cap \Delta^{(q)}_{X/k}(k)-\frac{\delta}{\delta'}q^{\text{dim}(X)}| \leq M(n,d_1,d_2)q^{\text{dim}(X)-\frac{1}{2}}.
\end{equation}

\noindent Here $\delta$ and $\delta'$ are the degree and the inseparable degree of $c_1$ and $c_2$ respectively.

\end{enumerate}

\end{thm}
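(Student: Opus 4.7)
The plan is to interpret $\#(C \cap \Delta^{(q)}_{X/k})(k)$ as a Lefschetz number for a correspondence twisted by Frobenius, and to extract the main term from top cohomology while controlling the remainder by Deligne's weight bounds. A $k$-point of the intersection is a point $x\in X(k)$ with $(x,F^{(q)}_{X/k}(x))\in C$; equivalently, a fixed point of the endomorphism of $X$ defined ``correspondence-theoretically'' by $c_1$, $c_2$, and the Frobenius $F^{(q)}_{X/k}$. Because relative Frobenius is totally contracting, such fixed points are isolated and transverse for $q$ large (yielding part (1)), and each contributes multiplicity $1$ in a suitable trace formula of Fujiwara--Pink type for contracting correspondences.

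\textbf{Reduction to a smooth proper situation.} Using the given projective closures $\bar X\subset\P^n_k$ and $\bar C\subset\P^{n^2+2n}_k$, I would pass to smooth proper models $\tilde X\to\bar X$ and $\tilde C\to\bar C$ via de Jong's alteration. Since $c_1$ is dominant (or $c_2$ is quasi-finite), the relations at the boundary, and the points in the singular locus, contribute only $O(q^{\dim X - 1/2})$ to the count: boundary/exceptional loci have dimension $<\dim X$, so a Lang--Weil-type bound applied recursively in dimension, together with degree estimates for the closures controlled by $n,d_1,d_2$, absorbs them into the error. This reduces the problem to the trace-theoretic statement on the smooth proper model.

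\textbf{Trace formula and weights.} On the smooth proper model, the correspondence $(c_1,c_2)$ composed with $F^{(q)}_{X/k}$ acts on $H^*_c(X,\Q_\ell)$, and the Fujiwara/Pink trace formula gives
\[
\#(C\cap\Delta^{(q)}_{X/k})(k)\;=\;\sum_{i=0}^{2\dim X}(-1)^i\,\Tr\bigl(\phi_q\mid H^i_c(X,\Q_\ell)\bigr).
\]
On the top group $H^{2\dim X}_c\cong\Q_\ell(-\dim X)$, Frobenius acts by $q^{\dim X}$, and the correspondence $c_{1,*}\circ c_2^*$ acts by multiplication by $\delta/\delta'$ (the projection formula, combined with the fact that the inseparable degree of $c_2$ cancels the purely inseparable part of the Frobenius twist on $X^{(q)}$). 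This yields the main term $(\delta/\delta')q^{\dim X}$. For $i<2\dim X$, Deligne's purity gives eigenvalues of absolute value $\leq q^{i/2}$ on $H^i_c$, so the contribution to the error is bounded by $\bigl(\sum_i\dim H^i_c(X,\Q_\ell)\bigr)\cdot\|\,c^*\,\|\cdot q^{\dim X-1/2}$, where both the Betti sum and the norm of the correspondence action on cohomology are bounded by a function of $n,d_1,d_2$ via Bezout-type estimates for subvarieties of $\P^n_k$ of bounded degree (Katz's bounds on sums of Betti numbers).

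\textbf{Main obstacle.} The delicate point is controlling the reductions to a smooth proper situation while keeping the degree data (and hence the Betti-number bounds) effectively in terms of $n,d_1,d_2$; equivalently, ensuring that the alteration and the passage between $X$, $\bar X$, $\tilde X$ do not inflate complexity in a way depending on $q$ or $k$. The second delicate point is identifying the main term as exactly $\delta/\delta'$ rather than $\delta/\deg(c_2)$ or similar: this requires a careful treatment of the separable/inseparable decomposition of $c_2$ in the presence of the Frobenius twist on the target, and it is there that the asymmetry of the statement (degree of $c_1$ vs.\ inseparable degree of $c_2$) arises.
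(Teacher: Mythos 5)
The paper does not prove Theorem~\ref{Hrushovski_Lang_Weil}; it cites Hrushovski and establishes only the \emph{non-uniform} variant (Theorem~\ref{non_uniform_estimate}), so there is no ``paper's own proof'' of the uniform statement to compare against. Your sketch therefore aims higher than the paper, but it contains a genuine gap that is precisely the difficulty the paper is organized around.

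The step ``boundary/exceptional loci have dimension $<\dim X$, so a Lang--Weil-type bound applied recursively absorbs them into the error'' does not work as stated. Even if you arrange (via Varshavsky's construction, \cite{Varshavsky_Hrushovski}) for the boundary $Z=\bar X\setminus X$ to be \emph{locally} $\bar c$-invariant, the Fujiwara/Pink or Lefschetz--Verdier formula on the compactification does not express $\#(C\cap\Delta^{(q)}_{X/k})(k)$ as a Lefschetz number plus a count of fixed points on $Z$. It expresses the global Lefschetz number as a sum of \emph{local terms}, and the local terms of $u^{(n)}$ supported on $Z$ are not naive fixed point counts: $c_1$ is not proper, $Z$ is not globally invariant, and the restriction $u^{(n)}|_Z$ need not be contracting. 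Bounding $\operatorname{LT}(u^{(n)}|_Z)$ is exactly Theorem~\ref{bound_local_term_invariant_subscheme_introduction}, which the paper devotes Section~\ref{local_terms_invariant_subset} to (the pairing $\Phi$, the classes $[\Delta^{(n)}]$, and the partial-Frobenius weight argument). There is no recursive Lang--Weil shortcut because the boundary correspondence does not satisfy the Lang--Weil hypotheses (its projections need not be dominant or quasi-finite), and the ``error'' lives in the abstract local term, not in a lower-dimensional point count. This gap is not cosmetic: the paper explicitly notes (following Hrushovski) that the non-properness of $c_1$ ``rules out a direct argument.''

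Two further problems. First, de Jong's alterations carry no degree/complexity bounds in terms of $n,d_1,d_2$, so even if the boundary issue were resolved, you would lose the uniformity that is the entire content of Theorem~\ref{Hrushovski_Lang_Weil} (you acknowledge this but do not resolve it; it is fatal for the uniform statement, though acceptable for the non-uniform one). The alteration $\tilde X\to\bar X$ also introduces exceptional fibers over $\operatorname{Sing}(\bar X)$ whose contribution to the fixed point count must be analyzed separately, and the lifted correspondence on $\tilde X$ need not be contracting there. Second, the assertion that $c_{1,*}\circ c_2^*$ acts on top cohomology by $\delta/\delta'$ because ``the inseparable degree of $c_2$ cancels the purely inseparable part of the Frobenius twist'' is too vague to be checkable; the paper's handling is a preliminary reduction (Proposition~\ref{reduction_generically_etale}) replacing $C$ by the reduced base change $C'=(C\times_{X\times X,1\times F^{r}}(X\times X))_{\mathrm{red}}$ so that $c_2'$ becomes generically \'etale, and then showing $\operatorname{Fix}(c'^{(n')})$ is in bijection with $\operatorname{Fix}(c^{(n'+r)})$. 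That bookkeeping is where the $\delta/\delta'$ comes from, and it needs to be carried out explicitly.

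In contrast, the paper's route (for the non-uniform statement) avoids alterations entirely: it shrinks $X$ to be smooth, constructs a cohomological correspondence on $\operatorname{IC}_{\bar X}$ using Deligne's decomposition and relative hard Lefschetz (Corollary~\ref{cohomological_correspondence_lifting_correspondence}), identifies the global term via Proposition~\ref{pushforward_pullback_intersection_cohomology}, and bounds the boundary local terms via the pairing machinery. If you want to salvage your sketch, the boundary local terms must be addressed head-on; purity alone will not do it.
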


Now suppose that $k$ is an algebraic closure of a finite field $\F_q$.~For any scheme $X/k$ defined over $\F_q$,~let $F_{X/\F_q} \colon X \to X$ be the geometric Frobenius with respect to $\F_q$.~Let $\Delta^{(n)}$ be the graph of  $F^{n}_{X/\F_q}$,~considered as a closed subscheme of $X \times_k X$.~Recently Varshavsky (\cite{Varshavsky_Hrushovski}) gave a geometric proof of the following corollary to Hrushovski's result.

\begin{cor}\label{dense_correspondence}(\cite{Hrushovski},~Corollary 1.2,~\cite{Varshavsky_Hrushovski},~Theorem 0.1)

Let $c:C \to X \times_k X$ be a morphism of schemes finite type over $k$ such that,

\begin{enumerate}

\item $C$ and $X$ are irreducible.

\item $c_1$ and $c_2$ are dominant.

\item $X$ is defined over $\F_q$.

\end{enumerate}

Then for $n$ sufficiently large,~$c^{-1}(\Delta^{(n)})$ is non-empty .

\end{cor}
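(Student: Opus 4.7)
The corollary will be deduced from Theorem~\ref{Hrushovski_Lang_Weil} applied with the prime power $q$ of that theorem replaced by $q^m$ for $m \gg 0$. Two ingredients are needed: (i) a canonical identification of $X$ with $X^{(q^m)}$ stemming from the $\F_q$-structure of $X$, which converts the given correspondence on $X \times_k X$ into a correspondence on $X \times_k X^{(q^m)}$ and carries $\Delta^{(m)}$ to $\Delta^{(q^m)}_{X/k}$; and (ii) a reduction arranging that one of $c_1, c_2$ be quasi-finite.

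For (i), fix a model $X_0/\F_q$ of $X$. For each $m \geq 1$ the identity of $X_0$ induces a canonical $k$-isomorphism $\sigma_m\colon X = X_0 \times_{\F_q} k \xrightarrow{\sim} X^{(q^m)}$, under which the graph of $F^m_{X/\F_q}$ corresponds to the graph of the relative Frobenius $F^{(q^m)}_{X/k}$. Choosing any projective embedding $\bar X \subseteq \P^n_k$, the degrees $d_1 = \deg \bar X$ and $d_2 = \deg \bar C$ (the latter via the Segre embedding) are constants independent of $m$.

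For (ii), one may first replace $C$ by the closure of $c(C)$ in $X \times_k X$: a Lang--Weil-type density argument shows that $c(C)$ contains a dense open subset of $\overline{c(C)}$ whose complement has strictly smaller dimension, hence strictly smaller order of intersection with $\Delta^{(m)}$, so non-emptiness of $\overline{c(C)} \cap \Delta^{(m)}$ forces $c^{-1}(\Delta^{(m)}) \neq \emptyset$. Assume therefore that $c$ is a closed immersion. If $\dim C > \dim X$, a Bertini-type cut by general hyperplanes in the compactification $\bar C$ produces an irreducible closed subvariety $C' \subseteq C$ of dimension $\dim X$ with both $c_1|_{C'}$ and $c_2|_{C'}$ still dominant; shrinking $C'$ to a dense open on which (say) $c_1|_{C'}$ becomes finite gives a correspondence to which Theorem~\ref{Hrushovski_Lang_Weil} applies, with degree data bounded independently of $m$.

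Applying Theorem~\ref{Hrushovski_Lang_Weil} with $q^m$ in place of $q$ now yields $C' \cap \Delta^{(q^m)}_{X/k} \neq \emptyset$ (indeed, with the quantitative Lang--Weil count) for $m$ such that $q^m \geq M$, where $M$ depends only on the fixed degree data; translating back along $\sigma_m$ and the inclusion $C' \subseteq C$ gives the desired $c^{-1}(\Delta^{(m)}) \neq \emptyset$. The main technical obstacle is the Bertini step of (ii): one must choose hyperplanes generic enough to preserve dominance of \emph{both} projections simultaneously, and then reconcile the passage to a quasi-finite dense open subset with the closed-subvariety hypothesis of Theorem~\ref{Hrushovski_Lang_Weil} while keeping the associated projective degrees uniformly bounded.
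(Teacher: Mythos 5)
Your proposal deduces the corollary from Theorem~\ref{Hrushovski_Lang_Weil} (Hrushovski's uniform result), whereas the paper deduces it from Theorem~\ref{non_uniform_estimate}, the paper's own new non-uniform theorem. This is not merely a stylistic difference: the stated purpose of the paper is to give a \emph{geometric} proof of the Hrushovski--Lang--Weil estimates and of this corollary, so routing the argument through Theorem~\ref{Hrushovski_Lang_Weil} (which is cited as a black box, proved by entirely different, model-theoretic means) would defeat the point. The paper's proof is the short paragraph following the statement of Theorem~\ref{non_uniform_estimate}: after invoking Varshavsky's reduction steps (\cite{Varshavsky_Hrushovski}, Claim 5.2, Steps 1, 2, 5) to place oneself under the hypotheses of Theorem~\ref{non_uniform_estimate}, the asymptotic~(\ref{asymptotic_main_theorem}) coming from~(\ref{main_theorem_estimate}) immediately gives non-emptiness of $\text{Fix}(c^{(n)})$. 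Your identification of $X$ with $X^{(q^m)}$ via the $\F_q$-structure (your point (i)) is correct and is exactly the implicit dictionary used by both Hrushovski and Varshavsky.

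Beyond this change of input theorem, your reduction steps contain genuine gaps, and they are precisely the ones that the paper outsources to Varshavsky's carefully worked-out Claim 5.2. The ``Lang--Weil-type density argument'' for passing from $\overline{c(C)}$ to $c(C)$ requires an \emph{upper} bound on $\#\bigl((\overline{c(C)}\setminus c(C)) \cap \Delta^{(m)}\bigr)$; Theorem~\ref{Hrushovski_Lang_Weil} does not directly furnish such a bound, because the boundary $\overline{c(C)}\setminus c(C)$ as a correspondence need not be irreducible, need not have dominant projections, and need not have a quasi-finite projection. Producing such an upper bound is essentially what the paper's Lemma~\ref{main_estimate_implies_weak_estimate} (the ``weak estimates'' $\text{WHLW}(d)$) is for, via a careful induction on dimension that handles reducible correspondences, components permuted by Frobenius, etc. Similarly, your Bertini cut followed by shrinking to a quasi-finite open requires verifying that \emph{both} projections stay dominant, that irreducibility is preserved, and that the locus you excise can be bounded, all while keeping the projective degree data uniform in $m$ --- the same circle of issues. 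You acknowledge the Bertini obstacle yourself, but the density step has the same character and is not filled in. The paper avoids these difficulties entirely by citing Varshavsky's reductions rather than reconstructing them.
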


Corollary \ref{dense_correspondence} has applications to algebraic dynamics (\cite{Fakhruddin},\cite{Amerik}),~group theory (\cite{Borisov_Sapir}) and algebraic geometry (\cite{Esnault_Mehta},~\cite{Esnault_Srinivas_Bost}).

This article aims to prove a non-uniform avatar of Theorem \ref{Hrushovski_Lang_Weil} using geometric methods.~Our methods also imply that generating function keeping track of the fixed points is rational.~More precisely we obtain the following result.

\begin{thm}\label{non_uniform_estimate}

Let $c:C \to X \times_k X$ be a morphism of schemes finite type over $k$.~Suppose that $X$ is defined over $\F_q$ and that $c_2$ is quasi-finite.~Then 

\begin{enumerate}

\item there exists an integer $N$ such that,~$\text{Fix}(c^{(n)}):=c^{-1}(\Delta^{(n)})$ is finite (over $k$) for every $n \geq N$.~Further there exists a real number $M$ such that for any $n>>0$,

\begin{center}

$\#\text{Fix}(c^{(n)})(k) \leq Mq^{n\text{dim}(C)}$.

\end{center}

\item The formal series $Z(c,t):=\sum_{n \geq N} \#\text{Fix}(c^{(n)})(k)t^n \in \Z[[t]] \subset \Q((t))$ is a rational function in $t$,~that is it belongs to $\Q(t)$.

\item Moreover when $C$ and $X$ are irreducible with $c_1$ and $c_2$ dominant,~there exists a real number $M'$ such that for any $n >>0$

\begin{equation}\label{main_theorem_estimate}
|\#\text{Fix}(c^{(n)})(k)-\frac{\delta}{\delta'}q^{n \text{dim}(X)}| \leq M'q^{n(\text{dim}(X)-\frac{1}{2})},
\end{equation}

\noindent where $\delta$ and $\delta$' are the degree and inseparable degree of $c_1$ and $c_2$ respectively.

\end{enumerate}

\end{thm}

\begin{rmk}\label{Theorem_implies_Varshavsky}

The inequality (\ref{main_theorem_estimate}) under the assumptions of Corollary \ref{dense_correspondence} implies that

\begin{equation}\label{asymptotic_main_theorem}
\lim_{n \to \infty} \frac{\#\text{Fix}(c^{(n)})(k)}{q^{n\text{dim}(X)}}=\frac{\delta}{\delta'},
\end{equation}

\noindent and hence $\#\text{Fix}(c^{(n)})=c^{-1}(\Delta^{(n)})$ is nonempty for $n$ sufficiently large (see Corollary \ref{dense_correspondence}).

\end{rmk}

Our proof of Theorem \ref{non_uniform_estimate} is closely related to the circle of ideas around Deligne's conjecture on the Lefschetz-Verdier trace formula for non-proper varieties over an algebraic closure of a finite field.~The conjecture was first verified by Pink (\cite{Pink}) assuming the resolution of singularities and later by Fujiwara (\cite{Fujiwara}) unconditionally,~following an idea of Gabber.~Subsequently Varshavsky obtained an effective generalization of Fujiwara's trace formula (\cite{Varshavsky}).~The key notion in both Fujiwara and Varshavsky's approach is that of a \textit{contracting correspondence},~which ensures vanishing of local terms along the boundary of a compactification.

The connection between Theorem \ref{non_uniform_estimate} and Deligne's conjecture was already observed by Hrushovski (\cite{Hrushovski},~Section 1.1).~However as noted there,~the non-properness of $c_1$ (crucial to make sense of Deligne's conjecture) rules out a `direct' argument.~This connection was reestablished in the recent proof of Corollary \ref{dense_correspondence} by Varshavsky (\cite{Varshavsky_Hrushovski}).

Indeed,~as a first step in the proof of Corollary \ref{dense_correspondence} in \cite{Varshavsky_Hrushovski} it is shown that (at the cost of shrinking $X$) one can assume the natural compactification of $c$ is \textit{locally invariant} along the boundary.~That the boundary can be made only locally invariant and not globally,~is a manifestation of $c_1$ not being proper.~Then using a construction of Pink (\cite{Pink}),~Varshavsky obtains a trace formula which in turn implies an asymptotic growth of the form (\ref{asymptotic_main_theorem}) for a modified correspondence,~which is sufficient to show the desired nonemptiness.

In this article, we attempt to compute $\text{Fix}(c^{(n)})$ directly using the Lefschetz-Verdier trace formula.~In particular we do not use alterations or resolution of singularities \footnote{Though one is tempted to find a middle path combining the methods of this article with those in \cite{Varshavsky_Hrushovski},~it appears that a plausible proof of uniformity using this approach leads to many difficulties,~primarily among them being the lack of a suitable \textit{norm} on correspondences of $\ell$-adic sheaves.} unlike Hrushovski (in \cite{Hrushovski}) and Varshavsky (in \cite{Varshavsky_Hrushovski}).~As mentioned earlier the non-properness of $c_1$ immediately leads to technical difficulties,~most important of which is the possible nonvanishing of local terms along the boundary.~Hence an important step for us in the proof of Theorem \ref{non_uniform_estimate} is the following estimate (Theorem \ref{bound_local_term_invariant_subscheme_introduction}) for these local terms,~which could be of independent interest.~We now describe the various terms appearing in Theorem \ref{bound_local_term_invariant_subscheme_introduction}.

As before let $k$ be an algebraic closure of a finite field $\F_q$.~Let $c  \colon C \to X \times_k X$ be a correspondence (define over $\F_q$),~with $C$ and $X$ proper over $k$.~Let $c_1$ and $c_2$ be the induced maps from $C$ to $X$.~Denote by $c^{(n)}$ the correspondence $(F^{n}_{X/\F_q} \circ c_1,c_2) \colon C \to X \times_k X$.

Let $Z \subseteq X$ be a closed subset of $X$ defined over $\F_q$,~which is \textit{locally} $c$-invariant over $\F_q$.~That is there exists a cover of $Z$ by open sets $U_i$ of $X$,~ defined over $\F_q$,~such that $c_2^{-1}(U_i \cap Z) \cap c_1^{-1}(U_i) \subseteq c_1^{-1}(Z)$.

Let $\sF_0 \in D^{b}_{\leq w}(X_0,\bar{\Q}_{\ell})$ be a mixed sheaf of weight less than or equal to $w$ on $X_0$ (the chosen model of $X$ over $\F_q$).~Assume that $\sF_0|_{Z_0}$ belongs to $~^pD^{\leq a}(Z_0,\bar{\Q}_{\ell})$.~Let $\sF$ be the base change of $\sF_0$ to $k$.

Let $u$ be an element in $\text{Hom}_{D^b_c(C,\bar{\Q}_{\ell})}(c_1^*\sF,c_2^{!}\sF)$ (a \textit{cohomological correspondence} of $\sF$ lifting $c$).~Then for any $n \geq 1$ we have a cohomological correspondence $u^{(n)}$ of $\sF$ lifting $c^{(n)}$,~given by the natural structure of a Weil sheaf on $\sF$.

Moreover fix a field isomorphism (say $\tau$) of $\bar{\Q}_{\ell}$ with $\C$.

Since $\text{Fix}(c^{(n)})$ is proper,~and $Z$ a locally $c^{(n)}$-invariant closed subset,~we can make sense of the local terms $\text{LT}(u^{(n)}|_Z)$ (see Lemma \ref{neighbourhood_fixed_points} and Section \ref{local_term_subscheme_invariant_neighbourhood_fixed_points}).~In this setting we obtain the following bound on the local terms.

\begin{thm}\label{bound_local_term_invariant_subscheme_introduction}
For any $\e>0$,~there exists a natural number $N(\e)$ and a positive real number $M(\tau)$,~such that for any $n \geq N(\e)$,

\begin{equation}\label{bound_local_term_invariant_subscheme_introduction_1}
|\text{LT}(u^{(n)}|_Z)| \leq M(\tau)q^{n(\frac{(w+a+\text{dim}(Z))}{2}+\e)}.
\end{equation}

Here the norm on the left is with respect to the chosen isomorphism $\tau$.
\end{thm}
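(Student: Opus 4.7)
The plan is to use local $c$-invariance to restrict $u^{(n)}$ to a proper cohomological correspondence on $Z$, identify $\text{LT}(u^{(n)}|_Z)$ with a global trace on the cohomology of $\sF|_Z$, and then bound that trace using Deligne's weight theorem together with Artin vanishing for the perverse $t$-structure.

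Since $X$ is proper and $Z$ is closed in $X$, both $Z$ and $C_Z \colonequals c_2^{-1}(Z) \cap c_1^{-1}(Z)$ are proper over $k$. Local $c$-invariance is precisely the statement that $C_Z$ is open-and-closed inside $c_2^{-1}(Z)$. Because $Z$ is defined over $\F_q$, the Frobenius $F^n_{X/\F_q}$ preserves $Z$, so the same $C_Z$ serves $c^{(n)}$; one obtains a proper correspondence $c^{(n)}_Z \colon C_Z \to Z \times_k Z$ over $\F_q$ together with a cohomological correspondence $u^{(n)}_Z \colon (c_1|_{C_Z})^\ast(\sF|_Z) \to (c_2|_{C_Z})^!(\sF|_Z)$ obtained from $u^{(n)}$ by base change. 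By the local nature of the local-term construction in Section \ref{local_term_subscheme_invariant_neighbourhood_fixed_points}, the local terms of $u^{(n)}|_Z$ at the components of $\text{Fix}(c^{(n)}) \cap C_Z$ coincide with those produced by applying Lefschetz-Verdier to $u^{(n)}_Z$ on the proper pair $(C_Z, Z)$. Summing over components yields the identity
\begin{equation*}
\text{LT}(u^{(n)}|_Z) = \text{Tr}\bigl(u^{(n)}_Z \bigm| R\Gamma(Z, \sF|_Z)\bigr).
\end{equation*}

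To bound the right-hand side, apply Deligne's main theorem in \emph{Weil II} to $\sF_0|_{Z_0}$: the cohomology $H^i(Z, \sF|_Z) = H^i_c(Z, \sF|_Z)$ is mixed of weight $\leq w + i$, so every Frobenius eigenvalue on it has $\tau$-absolute value at most $q^{(w+i)/2}$. The hypothesis $\sF_0|_{Z_0} \in {}^pD^{\leq a}$, combined with Artin vanishing for semi-perverse sheaves on the proper variety $Z$ (i.e.\ the right $t$-exactness of $R\Gamma(Z,-)$ up to the shift $\dim Z$), forces $H^i(Z, \sF|_Z) = 0$ for $i > a + \dim(Z)$. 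The induced endomorphism of $R\Gamma(Z, \sF|_Z)$ is $F^n \circ u_Z^\ast$ with $u_Z^\ast$ a fixed operator coming from $u_Z$; writing $F$ in Jordan form on each $H^i(Z, \sF|_Z)$ bounds the trace by a polynomial in $n$ times $q^{n(w+i)/2}$. Summing over $i \leq a + \dim(Z)$, choosing $M(\tau)$ proportional to $\|u_Z^\ast\| \cdot \sum_i \dim H^i(Z, \sF|_Z)$, and absorbing the polynomial factors into $q^{n\e}$ for $n \geq N(\e)$ yields the claimed estimate.

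The main obstacle is the displayed identity. Local $c$-invariance is witnessed only on an open cover $\{U_i\}$ of $Z$, so one must check cover-by-cover that the local terms defined in Section \ref{local_term_subscheme_invariant_neighbourhood_fixed_points} via neighborhoods of fixed points (as in Lemma \ref{neighbourhood_fixed_points}) agree with the Lefschetz-Verdier local terms for the globally-defined restricted correspondence $u^{(n)}_Z$, and verify independence of the cover. This is the technical content that Varshavsky's locally-invariant correspondence machinery supplies; once it is in place, the remaining weight and perverse-degree estimates follow routinely from Deligne's theory.
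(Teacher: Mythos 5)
There is a genuine gap, and it is exactly the obstacle the paper flags in the sentence ``even though the local terms along a locally invariant subset are defined, they do not correspond to a \emph{global} term in a natural fashion.''

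Your proposed identity
\begin{equation*}
\text{LT}(u^{(n)}|_Z) = \text{Tr}\bigl(u^{(n)}_Z \bigm| R\Gamma(Z, \sF|_Z)\bigr)
\end{equation*}
is the Lefschetz--Verdier formula for a proper self-correspondence of $Z$, and it is only available when $Z$ is \emph{globally} $c$-invariant. Your reduction hinges on the claim that local $c$-invariance means $C_Z \colonequals c_1^{-1}(Z)\cap c_2^{-1}(Z)$ is open-and-closed in $c_2^{-1}(Z)$. That is not what Definition \ref{locally_invariant_closed} says. Local invariance only gives, over each $U_i$ in the cover, the inclusion $c_1^{-1}(U_i)\cap c_2^{-1}(U_i)\cap c_2^{-1}(Z)\subseteq c_1^{-1}(Z)$; there can be points $y\in c_2^{-1}(Z)\setminus C_Z$ in the closure of $C_Z$ for which $c_1(y)$ and $c_2(y)$ lie in no common $U_i$, and local invariance says nothing about them. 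Consequently $C_Z$ need not be open in $c_2^{-1}(Z)$, and more to the point, there is no globally defined correspondence $c_Z\colon C_Z\to Z\times_k Z$ with a cohomological correspondence $u^{(n)}_Z$ of $\sF|_Z$ lifting it: the restriction construction of Section \ref{restriction_cohomological_correspondence_invariant_subscheme} requires $c_1(c_2^{-1}(Z))\subseteq Z$, which is precisely global invariance. What one actually obtains, via Lemma \ref{neighbourhood_fixed_points} and Section \ref{local_term_subscheme_invariant_neighbourhood_fixed_points}, is a cohomological correspondence $u^{(n)}|_{W,Z}$ lifting $c^{(n)}|_{W,Z}$, whose source is the \emph{open} subset $W\cap c_1^{-1}(Z)\cap c_2^{-1}(Z)$ of the proper scheme $c_1^{-1}(Z)\cap c_2^{-1}(Z)$. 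This source is not proper over $k$, so Corollary \ref{Lefschetz-Verdier} does not apply, and the sum of local terms has no reason to equal $\text{Tr}(R\Gamma(Z,\sF|_Z))$.

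The paper's route is deliberately indirect for this reason. It packages $c|_{W,Z}$ as a ``good'' correspondence over $\F_q$ (Definition \ref{good_correspondences}, Lemma \ref{local_invaraince_good}), then reduces to Theorem \ref{local_term_growth_good_correspondence}. The substitute for a Lefschetz--Verdier global term is the pairing (\ref{main_pairing}) on $H^{0}_c(U,j^*(\sF\boxtimes\mathbb{D}_X\sF))$: Proposition \ref{local_terms_using_functional} gives $\Phi_u([\Delta^{(n)}])=\text{LT}(u^{(n)})$, Lemma \ref{Frobenius_pullback_cohomology_classes} expresses $[\Delta^{(n)}]=(F_l^*)^n[\Delta]$ via the \emph{partial} Frobenius, and Lemma \ref{partial_weights_Frobenius} bounds the $F_l^*$-weights on $H^{0}_c(U,\cdot)$ by $w+a+\dim Z$ using the product structure of $U$. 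Your weight and perverse-degree estimates in the final paragraph (mixedness from Weil II, vanishing of $H^i$ for $i>a+\dim Z$, Jordan-form polynomial factors absorbed into $q^{n\e}$) are individually correct and parallel Lemmas \ref{partial_weights_Frobenius} and \ref{linear_algebra_bound}; but they are applied to the wrong object, because the displayed identity feeding them is unavailable. That identity \emph{is} the theorem's difficulty, not a checkable ``technical content'' as you suggest in your last paragraph, and the paper develops roughly four subsections of new machinery to get around it.
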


More generally we prove such a bound for a class of correspondences,~which we call \textit{essentially proper} over $\F_q$ (see Definition \ref{good_correspondences}).

Note that if $Z$ were $c$-invariant (and not just locally),~Theorem \ref{bound_local_term_invariant_subscheme_introduction} would be an immediate consequence of the Lefschetz-Verdier trace formula.~The main difficulty here is that even though the local terms along a locally invariant subset are defined,~they do not correspond to a \textit{global} term in a natural fashion.

Before we give a brief outline of the proof,~we describe the notations and conventions followed in this article.

\subsubsection{Acknowledgements:}

I am grateful to Prof.~Vasudevan Srinivas for his constant support and valuable insights at various stages of this project.~I would like to thank Prof.~H\'el\`ene Esnault for her consistent encouragement and stimulating discussions.~I am grateful to Prof.~Yakov Varshavsky for his constructive remarks and suggestions,~which have greatly improved the exposition in the article.~I would also like to thank Prof. Ehud Hrushovski for insightful discussions,~and in particular bringing the question of rationality to my attention.~I would also like to thank Prof.~Arvind Nair for helpful discussions.~This work has been supported by the Einstein Foundation Berlin and ISF grant 822/17 of Prof. Yakov Varshavsky.~Finally I am thankful
to the referees for a careful reading of the article and the many suggestions and corrections which have improved the exposition and readability of the article.

\section{Notations and conventions}\label{notations_convention}

\subsubsection{}All the schemes appearing in this article are assumed to be separated over $\Z$.~For any scheme $X$,~$\pi_0(X)$ denotes the set of its connected components.~A \textit{variety} over a field $k$ is a geometrically integral scheme of finite type over $k$.~For any integral scheme $X/k$,~by $k(X)$ we mean the function field of $X$.

\subsubsection{} Let $k$ be either a finite field or an algebraically closed field.~For a scheme $X$ of finite type over $k$,~by $D^b_c(X,\bar{\Q}_{\ell})$ we mean the bounded derived category of $\bar{\Q}_{\ell}$ sheaves with constructible cohomology (\cite{Deligne_Weil_II},~Section 1.1.2-3,~\cite{Bhatt_Scholze},~Section 6.6).~When we say sheaves on $X$,~we mean objects in $D^b_c(X,\bar{\Q}_{\ell})$. For any two sheaves $\sF$ and $\sG$ on $X$,~by $\text{Hom}(\sF,~\sG)$,~we mean $\text{Hom}_{D^b_c(X,\bar{\Q}_{\ell})}(\sF,\sG)$. The constant sheaf with coefficients in $\bar{\Q}_{\ell}$ on $X$ will be denoted by $\bar{\Q}_{\ell}$.

The usual $t$-structure on $D^b_c(X,\bar{\Q}_{\ell})$ will be denoted by $(D^{\leq 0},D^{\geq 0})$. The perverse $t$-structure will be denoted by $(^pD^{\leq 0},~^pD^{\geq 0})$.~We use $^pH^i$ to denote the corresponding perverse cohomology functors.~For any variety $X/k$ (possibly non-normal) we denote by 

\begin{equation}\label{definition_intermediate_extension}
\text{IC}_{X} \colonequals j_{!*}(\bar{\Q}_{\ell}[\text{dim}(X)]),
\end{equation}

\noindent where $j \colon X^{\text{reg}} \hookrightarrow X$ is the inclusion of the regular locus of $X$.~Here $j_{!*}$ is the intermediate extension functor (\cite{BBD},~D\'efinition 1.4.22).

\subsubsection{} We are mostly interested in the triangulated versions of the sheaf operations,~so we will denote them without the usual decorations of `R or `L'. For example, the derived direct image functors will be denoted by $f_*$.~The only exception to this is the derived (local) internal $\text{Hom}$ functor,~which will be denoted by $\mathcal{RH}\text{om}$.

For a morphism of schemes $f \colon X \to Y$,~finite type over $k$,~we have adjoint pairs $(f_!,f^!)$ and $(f^*,f_*)$.~Moreover when $f$ is proper we have an adjoint triple $(f^*,f_!=f_*,f^!)$.~For an embedding $f:Y \hookrightarrow X$ and any $\sF \in D^{b}_{c}(X,\bar{\Q}_{\ell})$ we write $\sF|_Y$ instead of $f^*\sF$.~We also follow a similar convention for a morphism of sheaves.

For any two sheaves $\sF$ and $\sG$ on $X$,~and a morphism $u$ from $\sF$ to $\sG$, $f_* u$ will denote the induced morphism from $f_* \sF$ to $f_*\sG$.~We will also follow a similar convention for the other functors.

\subsubsection{}When $k$ is an algebraically closed field,~we identify $D^{b}_{c}(\Spec(k),\bar{\Q}_{\ell})$ with the bounded derived category of finite dimensional $\bar{\Q}_{\ell}$ vector spaces.

\subsubsection{} For any scheme $X$ of finite type over $k$,~let $\pi_X \colon X \to \Spec(k)$ denote the structural morphism.~Let $K_{X} \colonequals \pi_X^{!}\bar{\Q}_{\ell}$,~be the dualizing complex of $X$.~We denote by $\mathbb{D}_X$ the Verdier duality functor.~For a morphism of sheaves $u \colon \sF \to \sG$,~we denote by $u^{\vee}$ the induced morphism from $\mathbb{D}_X \sG$ to $\mathbb{D}_X \sF$.

\subsubsection{} For schemes $X_1$ and $X_2$ over $k$,~let $\text{pr}_1$ and $\text{pr}_2$ denote the projections from $X_1 \times_k X_2$ onto $X_1$ and $X_2$ respectively.~Given any morphism $c \colon C \to X_1 \times_k X_2$,~by $c_1$ and $c_2$ we mean $\text{pr}_1 \circ c$ and $\text{pr}_2 \circ c$ respectively.

Let $\sF_1$ and $\sF_2$ be sheaves on $X_1$ and $X_2$ respectively.~Denote by $\sF_1 \boxtimes \sF_2$ the object $\text{pr}_1^* \sF_1 \otimes \text{pr}_2^{*} \sF_2$ in $D^{b}_{c}(X_1 \times_{k} X_2, \bar{\Q}_{\ell})$.~There is a canonical isomorphism (\cite{Illusie},~(1.7.6) and (2.2.4))
 
 \begin{equation}\label{duality_distributes_external_products}
  \mathbb{D}_{X_1} \sF_1 \boxtimes  \mathbb{D}_{X_2}\sF_2 \simeq \mathbb{D}_{X_1 \times_k X_2}(\sF_1 \boxtimes \sF_2).
 \end{equation}

\subsubsection{}\label{base_change_section_2} Consider a Cartesian (upto nilpotents) diagram of schemes (finite type over $k$)

\begin{equation}\label{notatation_base_change}
\xymatrix{X' \ar[r]^-{g'} \ar[d]^-{f'} & X \ar[d]^-{f} \\
             Y'                        \ar[r]^-{g}          &  Y }.
\end{equation}

We shall make repeated use of the proper base change theorem (\cite{SGA5},~Expos\'e VI, 2.2.3) either in the form $g^*f_! \simeq f'_{!}g^{'*}$ or its Verdier dual $g^!f_* \simeq g^{'!}f^{'}_{*}$.~Moreover there is also a natural transformation of functors $f^{'!}g^* \to g^{'*}f^!$.~In any case such morphisms will be simply denoted by $(\text{BC})$.

\subsubsection{}\label{cup_products_compactly_supported}

Let $X/k$ be a finite type scheme,~and $j:X \hookrightarrow \bar{X}$ a compactification.~Let $\sF$ and $\sG$ be sheaves on $X$.~Then there exists a natural commutative diagram

\begin{equation}\label{cup_product_compactly_supported_sheaves}
\xymatrix{ j_!\sF \otimes j_* \sG \ar[r]^-{\simeq} \ar[d] & j_!(\sF \otimes \sG) \ar[d] \\
                j_*\sF \otimes j_* \sG \ar[r] & j_*(\sF \otimes \sG) }.
\end{equation}

Here the isomorphism along the top row comes from the projection formula (\cite{SGA4},~XVII,~(5.2.9)).~The map on the bottom row is immediate from the adjunction between $(j^*,j_*)$.~The vertical maps are the usual forget support maps.~Applying $\text{Hom}(\hspace{0.5cm},\bar{\Q}_{\ell})$~we get 

\begin{equation}\label{cup_product_compactly_supported_cohomology}
\xymatrix{ H^0_c(X,\sF) \otimes H^0(X,\sG) \ar[r] \ar[d] &H^0_c(X,\sF \otimes \sG) \ar[d] \\
                H^0(X,\sF) \otimes H^0(X, \sG) \ar[r] & H^0(X,\sF \otimes \sG) }.
\end{equation}

Here the vertical maps are forget support maps,~and the horizontal ones are cup product maps.

\subsubsection{}\label{exceptional_cup_product}

Let $f:X \to Y$ be a morphism of schemes finite type over $k$.~We denote by $t_{f^!} \colon f^!\sF \otimes f^* \sG \to f^!(\sF \otimes \sG)$,~the natural map adjoint to the composition $f_!(f^! \sF \otimes f^* \sG) \simeq f_!f^!\sF \otimes \sG \to \sF \otimes \sG$.~Here the first isomorphism is due to the projection formula,~and the second arrow is due to the adjoint pair $(f_!,f^!)$.

\subsubsection{} For a closed subscheme $Z$ of $X$,~let $\sI_Z$ denote its ideal sheaf.~By $Z_{red}$ we mean the reduced closed subscheme underlying $Z$.~By $Z_d,~d \geq 1$,~we mean the closed subscheme of $X$ defined by the ideal sheaf $\sI_{Z}^{d}$.~In particular $Z_{1}=Z$,~and $Z_{r}$ is a closed subscheme of $Z_{s}$,~whenever $r \leq s$.~For a morphism of schemes $f \colon Y \to X$,~by $f_{\text{red}}$ we mean the induced morphism from $Y_{\text{red}}$ to $X_{\text{red}}$.

\subsubsection{}Let $k_0$ be an arbitrary finite field.~Let $k$ be an algebraic closure of $k_0$.~Objects over $k_0$ will be denoted by a subscript $0$ (for example $X_0,~f_0,~\sF_0$,~etc.).~The corresponding object over $k$ will be denoted without a subscript,~for example $X,~f,~\sF$,~etc.~For a scheme $X/k$ defined over $k_0$,~we denote by $F_{X/k_0}$ the geometric Frobenius morphism of $X$ (with respect to $k_0$ and the chosen model of $X$ over $k_0$).~By $F^{0}_{X/k_0}$ we mean the identity map on $X$.

We shall denote the graphs of $F^{n}_{X/k_0}$ by $\Delta^{(n)}_X$ (or $\Delta^{(n)}$,~if there is no scope of confusion) considered as subschemes of $X \times_k X$.~$\Delta^{(0)}_X$ will be simply denoted by $\Delta_X$ (or $\Delta$).

\subsubsection{}\label{twist_correspondence_notation} Let $c \colon C \to X \times_k X$ be a morphism of schemes finite type over $k$.~Suppose $X$ is defined over $k_0$.~Then for any $n \geq 0$,~we denote by $c^{(n)} \colon C \to X \times_k X$ the morphism induced by $(F^n_{X/k_0} \circ c_1,c_2)$.

Let $\sF$ be a Weil sheaf on $X$,~that is a sheaf $\sF$ and an isomorphism 

\begin{equation}\label{Weil_Sheaf_Notations}
F_{\sF}:F^*_{X/k_0} \sF \simeq \sF.
\end{equation}

Let $F^{n*}_{\sF}$ denote the induced isomorphism from $F^{n*}_{X/k_0}\sF$ to $\sF$ obtained by iterating (\ref{Weil_Sheaf_Notations}).~For any sheaf $\sG$ on $X$ and any element $u$ in $\text{Hom}(c_1^* \sF,~c_2^{!}\sG)$,~we denote by $u^{(n)}$ the element in $\text{Hom}(c^{(n)*}_1 \sF,~c_2^{!}\sG)$ obtained as follows

\begin{equation}\label{cohomological_correspondence_twist_notation}
\xymatrix{u^{(n)}:c_1^{*}F^{n*}_{X/k_0} \sF \ar[r]^-{\cong}_-{c_1^*F^{n}_{\sF}} & c_1^* \sF \ar[r]^-{u} & c_2^! \sG}.
\end{equation}

\subsubsection{} For a scheme $X_0$ of finite type over $k_0$,~we also have the category of mixed sheaves $D^b_m(X_0,\bar{\Q}_{\ell})$ (\cite{BBD},~Section 5.1.6).~As in \cite{BBD} we denote by $D^b_{\leq w}(X_0,~\bar{\Q}_{\ell})$ the full subcategory of objects in $D^b_m(X_0,\bar{\Q}_{\ell})$  of weight less than or equal to $w$.~Note that $\text{IC}_X$ as defined in (\ref{definition_intermediate_extension}) is pure of weight $\text{dim}(X)$ (\cite{BBD},~Corollaire 5.3.2).

\section{Outline of the proof}

Our proof can be divided into three distinct steps,~which are logically independent of each other.~We describe these now.

\subsubsection*{1. Preliminary reductions} 
As a first step towards proving Theorem \ref{non_uniform_estimate},~we show that we can reduce to the situation of Theorem \ref{non_uniform_estimate},~$(3)$ (Lemma \ref{basic_reduction}).~In addition we can also assume that $c_2$ is generically \'etale (Proposition \ref{reduction_generically_etale}).~The idea here is simple,~we use the absolute Frobenius to get rid of generic inseparability.~Since the desired property is generic,~we work over function fields (Lemma \ref{field_frobenius_etale}),~and then spread it out.


Next, we would like to show that there exists a compactification of $c$ that leaves the boundary locally invariant (see Definition \ref{locally_invariant_closed}).~This will guarantee that the boundary becomes contracting (see Definition \ref{contracting}) after twisting the correspondence by a high enough power of the Frobenius.~This, in turn, allows us to use a result of Varshavsky on the decomposition of local terms in the form of Corollary \ref{shape_of_trace_formula}.~As shown by Varshavsky (\cite{Varshavsky_Hrushovski},~Section 2) the local invariance along the boundary can always be achieved at the cost of shrinking $X$.~In conclusion,~Lemma \ref{basic_reduction} and Proposition \ref{reduction_to_locally_invariant_boundary} allow us to assume that 

\begin{enumerate}[(a)]

\item $X$ is smooth and quasi-projective over $k$.

\item $c$ is proper and defined over $\F_q$.

\item $c_2$ is \'etale.

\item There exists a compactification of $c$ that leaves the boundary locally invariant over $\F_q$.

\end{enumerate}

\subsubsection*{2. Constructing a cohomological correspondence}

We intend to prove Theorem \ref{non_uniform_estimate} using the Lefschetz-Verdier trace formula (see Theorem \ref{Lefschetz-Verdier}).~The formula takes as input a proper correspondence and a cohomological correspondence lifting it,~and produces equality between the associated local and global terms.

In the setting of Theorem \ref{non_uniform_estimate} we do have a proper correspondence (constructed in Step 1),~$\bar{C} \to \bar{X} \times_k \bar{X}$ (denoted by $\bar{c}$).~Let $j:X \hookrightarrow \bar{X}$ be the open immersion.~On $X$ we have the following natural cohomological correspondence

\begin{equation}\label{overview_1}
\xymatrix{c_1^*\bar{\Q}_{\ell} \ar[r]^-{\cong} & \bar{\Q}_{\ell} \ar[r]^-{\cong} & c_2^{!}\bar{\Q}_{\ell}},
\end{equation}

\noindent lifting $c$.~The second isomorphism in (\ref{overview_1}) is a consequence of $c_2$ being \'etale.

However (\ref{overview_1}) does not extend in a natural way to a cohomological correspondence lifting $\bar{c}$.~In fact if either $c_1$ (or $c_2$) were proper,~(\ref{overview_1}) can be extended to a cohomological correspondence of $j_!\bar{\Q}_{\ell}$ (or $j_*\bar{\Q}_{\ell}$),~but as mentioned earlier we cannot ensure this in general.~What does happen in general,~is that (\ref{overview_1}) can be extended to a morphism 

\begin{equation}\label{overview_2}
\xymatrix{\bar{c}_1^*j_!\bar{\Q}_{\ell} \to  \bar{c}_2^{!}j_{*}\bar{\Q}_{\ell}}.
\end{equation}

This \textit{duality} motivated us to consider the intermediate extension $\text{IC}_{\bar{X}}$.~Constructing pullback maps for the intersection complex has been considered by various authors (\cite{Gabber_pull_back}, \cite{Weber_1}, \cite{Weber_2}, \cite{Hanamura_Saito}). However none of these constructions are functorial for general maps.~For our purposes this lack of functoriality is not an issue.~Using the results obtained in Appendix \ref{cohomological_IC},~we construct a cohomological correspondence (Corollary \ref{cohomological_correspondence_lifting_correspondence})

\begin{equation}\label{overview_3}
u:\xymatrix{\bar{c}_1^*\text{IC}_{\bar{X}} \to  \bar{c}_2^{!}\text{IC}_{\bar{X}}},
\end{equation}

\noindent which restricts to (\ref{overview_1}) on $X$ (upto shift by $d:=\text{dim}(X)$).~Moreover our construction of (\ref{overview_3}) is such that the linear map $\bar{c}_{2*}\bar{c}_1^*:H^{d}(\bar{X},~\text{IC}_{\bar{X}}) \to H^{d}(\bar{X},~\text{IC}_{\bar{X}})$ induced by (\ref{overview_3}) is multiplication by the generic degree of $c_1$ (Proposition \ref{pushforward_pullback_intersection_cohomology}).~This can be seen as a manifestation of $\bar{c}_1$ and $\bar{c_2}$ being dominant,~and hence their images intersecting the smooth locus of $\bar{X}$.

Purity of $\text{IC}_{\bar{X}}$ then ensures that the global term associated to (\ref{overview_3}),~produces the \textit{correct} leading and error terms in the estimate (\ref{main_theorem_estimate}) (Proposition \ref{global_term_correspondence_intersection_complex})  .

Finally using Varshavsky's result on decomposition of local terms for a contracting correspondence (in the form of Corollary \ref{shape_of_trace_formula}),~we reduce the problem to computing the \textit{naive local terms} of $u^{(n)}$ on $X$,~and the local terms $\text{LT}(u^{(n)}|_Z)$,~where $Z$ is the closed complement of $X$ in $\bar{X}$.~Since $u|_X$ is the cohomological correspondence (\ref{overview_1}) up to shift,~and $c_2$ is \'etale,~the naive local terms are easily shown to be $1$ (Lemma \ref{naive_local_term_calculate}).

Thus it remains to show that $|\text{LT}(u^{(n)}|_Z)|$ asymptotically grows no faster than the error term,~which is of the order $q^{n(d-\frac{1}{2})}$.~Note that $u^{(n)}|_Z$ is a cohomological correspondence of $\text{IC}_{\bar{X}}|_Z$ lifting a possibly \textit{non-proper} correspondence (see Section \ref{local_term_subscheme_invariant_neighbourhood_fixed_points}).~The rest of the article is devoted to the proof of Theorem \ref{bound_local_term_invariant_subscheme_introduction}.

\subsubsection*{3. Local terms along a locally invariant subset}

We observe that (see Diagram \ref{Cartesian_locally_invariant}) local invariance of a closed subset $Z$ over $\F_q$,~gives rise to a correspondence $c \colon C \to Z \times_k Z$,~and a diagram of the form

\begin{equation}\label{essentially-proper_intro}
\xymatrix{ C   \ar[d]_-{\bar{c}_U} \ar[dr]^-{c} &  \\
                  U   \ar@{^{(}->}[r]^-{j}               & Z \times_k Z             }
\end{equation}

\noindent such that

\begin{enumerate}[(a)]

\item $Z$ is proper.

\item $\bar{c}_U$ is proper.

\item $U=\cup_{i=1}^{r} U_i \times_k U_i$,~where $U_i$'s are open subsets of $Z$ defined over $\F_q$ and cover $Z$.

\end{enumerate}

Such correspondences $c:C \to Z \times_k Z$ are defined to be \textit{essentially proper} over $\F_q$ (Definition \ref{good_correspondences}).~It is easy to see that the non-proper correspondences which come from locally invariant subsets are essentially proper (Lemma \ref{local_invaraince_good}).


Note that (b) implies $U$ contains $\Delta^{(n)}$ as a closed subscheme for all $n \geq 0$,~and is stable under the \textit{partial Frobenius},~$F_l \colonequals F_{Z/\F_q} \times 1_Z$.~Since $\Delta^{(n)}$ are closed subschemes of $U$,~$\text{Fix}(c^{(n)})$ is necessarily proper over $k$.~Thus given a cohomological correspondence $u$ of a Weil sheaf $\sF$ on $X$ lifting $c$,~we can make sense of $\text{LT}(u^{(n)})$.~The goal of Section \ref{local_terms_invariant_subset} is to prove Theorem \ref{local_term_growth_good_correspondence} on the growth of these local terms with respect to $n$,~which implies Theorem \ref{bound_local_term_invariant_subscheme_introduction},~and hence Theorem \ref{non_uniform_estimate} (see Section \ref{enough_to_bound_local}).

In general, there are two equivalent ways to compute local terms.~One via Varshavsky's recipe (see Section \ref{trace_maps}),~and the other via a pairing defined by Illusie.~In any case one constructs a trace map

\begin{equation}\label{overview_4}
\mathcal{T}\textit{r}_c \colon \text{Hom}(c_{2!}c^{*}_1\sF,\sF) \to H^{0}(\text{Fix}(c),K_{\text{Fix}(c)}).
\end{equation}

The local terms are then obtained by composing (\ref{overview_4}) with the adjunction $H^{0}(\text{Fix}(c),K_{\text{Fix}(c)}) \to \bar{\Q}_{\ell}$,~provided $\text{Fix}(c)$ is proper over $k$.

The local terms of $u^{(n)}$ are obtained by twisting the correspondence and applying (\ref{overview_4}),~whose target now becomes $H^{0}(\text{Fix}(c^{(n)}),K_{\text{Fix}(c^{(n)})})$. Since there are no natural maps in general between $\text{Fix}(c)$ and $\text{Fix}(c^{(n)})$,~it is not possible to \textit{compare} these maps.~To be able to compare these maps we need a common `target' for them. 

A source of this problem is that the trace map (and hence the Lefschetz-Verdier trace formula) is adapted to the diagonal.~This is unlike the Lefschetz trace formula for smooth projective varieties which allows for the intersection between arbitrary cohomology classes in the right degree.~In Section \ref{the_pairing} we give an alternate description of the trace map which has this additional flexibility.~In the notation of (\ref{essentially-proper_intro}),~we define a pairing 

\begin{equation}\label{main_pairing_outline}
\Phi:\text{Hom}(c_1^*\sF,c_2^!\sF) \otimes_{\bar{\Q}_{\ell}} H^{0}_c(U,~j^*(\sF \boxtimes \mathbb{D}_Z \sF)) \to H^{0}(Z \times_k Z,K_{Z \times_k Z}),
\end{equation}

\noindent and hence for any cohomological correspondence $u$,~ a linear functional 

\begin{equation}\label{linear_functional_outline}
\Phi_u(\beta):=\text{Tr}_{Z \times_k Z}(\Phi(u \otimes \beta)),
\end{equation}

\noindent on $H^{0}_c(U,~j^*(\sF \boxtimes \mathbb{D}_Z \sF))$.~Here $\text{Tr}_{Z \times_k Z}$ is the natural trace map on $H^{0}(Z \times_k Z,K_{Z \times_k Z})$.

Further using the fact that $\Delta^{(n)}$ are closed subschemes of $U$,~the Weil sheaf $\sF$ and the evaluation map (\ref{evaluation_trace}),~we define cohomology classes $[\Delta^{(n)}]$ in $H^{0}_c(U,~j^*(\sF \boxtimes \mathbb{D}_Z \sF))$ which satisfy (see Proposition \ref{local_terms_using_functional})

\begin{equation}\label{local_term_computed_globally}
\Phi_u([\Delta^{(n)}])=\text{LT}(u^{(n)}).
\end{equation}

The equality (\ref{local_term_computed_globally}) can be seen as a trace formula in this non-proper setting,~ where the object on the left is thought of as a \textit{global} term.

Recall that the partial Frobenius $F_l$ acts on $U$,~and since $\sF$ has the structure of a Weil sheaf,~it acts on $H^{0}_c(U,~j^*(\sF \boxtimes \mathbb{D}_Z \sF))$ too.~It essentially follows from the definition of $[\Delta^{(n)}]$ that (see Lemma \ref{Frobenius_pullback_cohomology_classes})

\begin{equation}\label{Frobenius_pullback_cohomology_classes_overview}
(F_l^{*})^n([\Delta])=[\Delta^{(n)}],
\end{equation}

\noindent for any $n \geq 0$.

When $\sF$ comes from a mixed sheaf $\sF_0 \in D^b_{\leq w}(Z_0,\bar{\Q}_{\ell})$ (on the chosen model $Z_0$ of $Z$),~the action of $F^*_l$ on $H^{0}_c(U,~j^*(\sF \boxtimes \mathbb{D}_Z \sF))$ is easy to understand using the \textit{almost product} structure of $U$ .~The key point here is that as far as the partial Frobenius is concerned,~$\sF_0 \boxtimes \mathbb{D}_{Z_0} \sF_0$ behaves as a mixed sheaf of weight less than or equal to $w$,~on a variety of $\text{dim}(Z)=\frac{\text{dim}(U)}{2}$.~Thus the weights of $F_l^*$ on $H^{0}_c(U,~j^*(\sF \boxtimes \mathbb{D}_Z \sF))$ are bounded above by $w+a+\text{dim}(Z)$ (see Lemma \ref{partial_weights_Frobenius}).

Having bounded the weights of the partial Frobenius,~Theorem \ref{local_term_growth_good_correspondence} follows immediately by combining (\ref{local_term_computed_globally}) and (\ref{Frobenius_pullback_cohomology_classes_overview}) using a linear algebra argument (Lemma \ref{linear_algebra_bound}).

\section{Varshavsky's trace formula}

In this section we recall the formalism behind Varshavsky's trace formula (\cite{Varshavsky}.)~Since we will be using the Lefschetz-Verdier trace formula (\cite{Illusie},~Corollary 4.7) we also recall the same.

\subsection{Correspondences and cohomological correspondences}In this section $k$ will denote an arbitrary algebraically closed field.

\begin{defn}[Correspondence]\label{correspondence}

A \textit{correspondence} from a scheme $X_1$ to $X_2$ is a morphism of schemes $c \colon C \to X_1 \times_k X_2$.~We will denote this by $[c]=(C,c_1,c_2)$.~When there is no risk of confusion we will also denote this simply by $c$.

\end{defn}

\subsubsection{}\label{the_trivial_correspondence}

The natural isomorphism $c_{tr}:\Spec(k) \to \Spec(k) \times_k \Spec(k)$ is a self-correspondence of $\Spec(k)$,~denoted by $[c_{tr}]=(\Spec(k),1_{\Spec(k)},1_{\Spec(k)})$.

\begin{defn}[Morphism of correspondences]\label{morphism_correspondences}

Let $[c]=(C,c_1,c_2)$ be a correspondence from $X_1$ to $X_2$ and let $[b]=(B,b_1,b_2)$ be a correspondence from $Y_1$ to $Y_2$.~A \textit{morphism of $[c]$ to $[b]$} consists of a triple of morphisms $[f] \colonequals (f_1,f^{\#},f_2)$ which fit into a commutative diagram \begin{center}

$\xymatrix{ X_1 \ar[d]^{f_1} & C \ar[l]_{c_1} \ar[r]^{c_2} \ar[d]^{f^{\#}}  & X_2 \ar[d]^{f_2} \\
                   Y_1                   & B \ar[l]^{b_1} \ar[r]_{b_2}                       &  Y_2                }$.

\end{center}

\end{defn}

\subsubsection{}\label{morphism_trivial_correspondence}
Let $c:C \to X_1 \times_k X_2$ be a correspondence from $X_1$ to $X_2$.~Then $[\pi]_c:=(\pi_{X_1}, \pi_C, \pi_{X_2})$ is a morphism from $[c]$ to $[c_{tr}]$ called the structural morphism of $[c]$.

\subsubsection{}\label{morphism_type_correspondence} We say a morphism of correspondences $[f]=(f_1, f^{\#}, f_2)$ is \textit{proper} (respectively an \textit{open immersion},~respectively a \textit{closed immersion}) if each of the $f_1$,~$f^{\#}$ and $f_2$ is proper (respectively an open immersion,~respectively a closed immersion).~We say a correspondence $[c]$ is proper,~if $C$,~$X_1$ and $X_2$ are proper are proper over $k$.

\begin{defn}[Compactification of correspondences]\label{compactification_correspondence}

A \textit{compactification of a correspondence} $c:C \to X_1 \times_k X_2$,~is an open immersion $[j]=(j_1, j^{\#}, j_2)$ of $[c]$ into a correspondence $\bar{c}: \bar{C} \to \bar{X}_1 \times_k \bar{X}_2$,~such that $[\bar{c}]$ is proper and $j_1, j^{\#}, j_2$ are dominant.

\end{defn}

We have the following Lemma.

\begin{lem}\label{compactification_Cartesian}

Let 
\begin{equation}\label{commutative_Cartesian}
\xymatrix{ C \ar@{^{(}->}[r]^-{j_C} \ar[d]^-{f} & \bar{C} \ar[d]^-{\bar{f}} \\
                 X \ar@{^{(}->}[r]^-{j} & \bar{X}}
\end{equation}

\noindent be a commutative diagram such that $f$ is a proper morphism.~Suppose that $j$ and $j_C$ are open immersions.~If $j_C$ has a dense image,~then (\ref{commutative_Cartesian}) is necessarily Cartesian.

\end{lem}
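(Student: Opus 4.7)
Set $C' := \bar{C} \times_{\bar{X}} X$ with projections $p_1 \colon C' \to \bar{C}$ and $p_2 \colon C' \to X$. Since $j$ is an open immersion, so is $p_1$, and thus $C'$ may be regarded as an open subscheme of $\bar{C}$. Commutativity of the given square yields a canonical morphism $\iota \colon C \to C'$ with $p_1 \circ \iota = j_C$ and $p_2 \circ \iota = f$. Because $j_C$ is an open immersion factoring through the open subscheme $p_1 \colon C' \hookrightarrow \bar{C}$, the morphism $\iota$ is itself an open immersion, realizing $C$ as an open subscheme of $C'$. The goal is to upgrade this open inclusion to an equality.

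The heart of the argument is that $\iota$ is simultaneously \emph{proper}. By hypothesis $p_2 \circ \iota = f$ is proper, and $p_2 \colon C' \to X$ is separated (the paper's conventions fix every scheme to be separated over $\Z$, so any morphism between such schemes is separated). The standard cancellation result---if $g \circ h$ is proper and $g$ is separated, then $h$ is proper---applies and forces $\iota$ to be proper. A proper open immersion is a closed immersion onto a clopen subset; hence $C$ is clopen inside $C'$, and in particular $C' \setminus C$ is open in $C'$.

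Finally, the density hypothesis closes the argument. Since $j_C(C)$ is dense in $\bar{C}$ and $C \subseteq C' \subseteq \bar{C}$, the image of $\iota$ is dense in $C'$. Combined with the openness of $C' \setminus C$ established in the previous step, density forces $C' \setminus C = \emptyset$, so $\iota$ is an isomorphism and the original square is Cartesian. The one delicate point I anticipate is the properness upgrade via cancellation, which hinges on having all schemes separated over the base; the rest is a routine manipulation of open subschemes and closures.
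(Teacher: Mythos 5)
Your proof is correct and follows essentially the same route as the paper's: form the fiber product $\bar{C}\times_{\bar{X}}X$, observe that the induced map from $C$ is both a dense open immersion and (by cancellation for proper morphisms, using that the schemes are separated) proper, and conclude it is an isomorphism. You have merely expanded the final step — proper open immersion gives a clopen image, density then forces surjectivity — which the paper leaves implicit.
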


\begin{proof}

Let $j'$ be the induced morphism from $C$ to $\bar{f}^{-1}(X)$.~Since $j_C$ is a dense open immersion,~so is $j'$.~Moreover since $f$ is proper (and our schemes are assumed to be separated),~$j'$ is also proper \cite[Corollaire 5.4.3]{EGAII} and hence is an isomorphism.~Thus (\ref{commutative_Cartesian}) is necessarily Cartesian.

\end{proof}

The following corollary is an immediate consequence of Lemma \ref{compactification_Cartesian} and will be used later. 

\begin{cor}\label{compactification_effect_open_part}
Let $\bar{c} \colon  \bar{C} \to \bar{X} \times_k \bar{X}$ be a compactification of a correspondence  $c \colon C \to X \times_k X$.~If $c$ is proper then $\bar{c}^{-1}(X \times_k X)=C$.
\end{cor}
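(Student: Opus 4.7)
The plan is to deduce the corollary as a direct application of Lemma \ref{compactification_Cartesian} to the commutative square
\[
\xymatrix{ C \ar@{^{(}->}[r]^-{j^{\#}} \ar[d]^-{c} & \bar{C} \ar[d]^-{\bar{c}} \\
X \times_k X \ar@{^{(}->}[r]^-{j_1 \times j_2} & \bar{X} \times_k \bar{X} }
\]
which commutes by the very definition of a morphism of correspondences (Definition \ref{morphism_correspondences}). Indeed, once we verify the three hypotheses of Lemma \ref{compactification_Cartesian} for this square, the lemma yields that it is Cartesian, which is exactly the statement $\bar{c}^{-1}(X \times_k X) = C$ that we want.

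The three hypotheses to check are: (i) the bottom horizontal map $j_1 \times j_2$ is an open immersion; (ii) the top horizontal map $j^{\#}$ is an open immersion with dense image; and (iii) the left vertical map $c$ is proper. Hypothesis (iii) is our standing assumption in the corollary. For (ii), the definition of a compactification (Definition \ref{compactification_correspondence}) requires $[j] = (j_1, j^{\#}, j_2)$ to be an open immersion of correspondences with $j_1, j^{\#}, j_2$ all dominant; by \ref{morphism_type_correspondence} each component is an open immersion, and a dominant open immersion has dense image. For (i), the product $j_1 \times j_2$ of the two open immersions $j_i \colon X \hookrightarrow \bar{X}$ is itself an open immersion, as follows from the standard fact that products of open immersions over $k$ are open immersions (this can be checked fibrewise, or by noting that $j_1 \times j_2$ factors as $j_1 \times \mathrm{id}$ followed by $\mathrm{id} \times j_2$, each of which is a base change of an open immersion).

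There is really no obstacle here — the content of the corollary is already encoded in Lemma \ref{compactification_Cartesian}, and the only thing to do is to match notation. The one place where one must be slightly careful is remembering that \emph{dominant} for an open immersion is the same as \emph{dense image}, so the dominance hypothesis in Definition \ref{compactification_correspondence} is exactly what Lemma \ref{compactification_Cartesian} requires for its upper horizontal map. Once this is in place, the corollary follows in one line.
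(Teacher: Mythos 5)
Your proof is correct and matches the paper's intended argument: the paper simply states the corollary is an immediate consequence of Lemma \ref{compactification_Cartesian}, and your write-up supplies exactly the verification of its three hypotheses (commutativity of the square from the two separate commuting squares in the definition of a morphism of correspondences, $j_1 \times j_2$ an open immersion, $j^{\#}$ a dense open immersion, $c$ proper) needed to make that one-liner rigorous.
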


\begin{defn}[Restriction of a correspondence to an open subscheme]\label{restriction_correspondence_open}

Let $[c]=(C,c_1,c_2)$ be a correspondence from $X$ to itself.~Let $U \subseteq X$ be an open subscheme.~Then the \textit{restriction of $c$ to $U$} is the correspondence, 
$[c]|_U \colonequals (c_1^{-1}(U) \cap c_2^{-1}(U),c_1|_{c_1^{-1}(U) \cap c_2^{-1}(U)},c_2|_{c_1^{-1}(U) \cap c_2^{-1}(U)})$ from $U$ to itself.~We shall also denote this correspondence by $c|_U$.

Similarly if $W \subseteq C$ is an open subscheme of $C$,~the restriction of $c$ to $W$ is the correspondence $[c]|_W \colonequals (W,c_1|_W,c_2|_W)$.~As before $c|_W$ shall denote the induced morphism from $W$ to $X \times_k X$,~and also the correspondence $[c]|_W$.
\end{defn}

\begin{defn}[Cohomological correspondence]\label{cohomological_correspondence}

Let $[c]=(C, c_1, c_2)$ be a correspondence from $X_1$ to $X_2$.~Let $\sF_1$ and $\sF_2$ be sheaves on $X_1$ and $X_2$ respectively.~A \textit{cohomological correspondence} from $\sF_1$ to $\sF_2$ lifting $c$ is an element of $\text{Hom}(c_{2!}c_1^*\sF_1,\sF_2)$.

\end{defn}

\subsubsection{Restriction of cohomological correspondence to an open subscheme}\label{restriction_cohomological_open}

Let $c \colon C \to X \times_k X$ be a self-correspondence of $X$.~Let $C^{0} \subseteq C$ and $X^{0}_i \subseteq X,~i=1,2$ be open subschemes.~Suppose that $c$ induces a correspondence $c^0 \colon C^0 \to X_{1}^{0} \times_{k} X^{0}_2$.~Thus we have a commutative diagram

\begin{center}

$\xymatrixcolsep{4pc} \xymatrix{ C^0 \ar@{^{(}->}[r] \ar[d]^-{c_2^0} & C \ar[d]^-{c_2} \\
                   X_2^0 \ar@{^{(}->}[r]                 & X_2                  }$.

\end{center}

For any sheaf $\sF$ on $C$,~there exists a natural adjunction morphism $\sF \to c_2^{!}c_{2!}\sF$.~Restricting the above morphism to $C^0$,~and using the adjunction between $c^0_{2!}$ and $c^{0!}_2$,~we get a morphism 

\begin{equation}\label{estriction_cohomological_open_1}
\text{BC}(\sF) \colon c^{0}_{2!}(\sF|_{C^0}) \to (c_{2!}\sF)|_{X^{0}_2}.
\end{equation}

Let $\sF_1$ and $\sF_2$ be sheaves on $X_1$ and $X_2$ respectively.~Let $u \in \text{Hom}(c_{2!}c^*_1\sF_1,\sF_2)$ be a cohomological correspondence from $\sF_1$ to $\sF_2$ lifting $c$.~Then we can \textit{restrict $u$} to give a cohomological correspondence from $\sF_1|_{X^0_1}$ to $\sF_2|_{X^0_2}$ lifting $c_0$ as follows,

\begin{center}

$\xymatrixcolsep{4pc} \xymatrix{ u^0 \colon c^{0}_{2!}c^{0*}_{1}(\sF_1|_{X^0_1}) \simeq c^0_{2!}(c^{*}_1\sF_1|_{C^0}) \ar[r]^-{\text{BC}(c_1^*\sF_1)} & (c_{2!}c^*_{1}\sF_1)|_{X^0_2} \ar[r]^-{u|_{X_2^0}} &  \sF_{2}|_{X^0_2}}$.

\end{center}

In particular,~for any open subscheme $U \subseteq X$ we have a cohomological correspondence $u|_U$ lifting $c|_U$ (see Definition \ref{restriction_correspondence_open}).

\subsubsection{Action of a correspondence on cohomology}\label{action_correspondence_cohomology}

Let $c \colon C \to X_1 \times_{k} X_2$ be a correspondence.~Let $u$ be a cohomological correspondence from $\sF_1$ to $\sF_2$ lifting $c$.~Suppose that $c_1$ is a proper morphism.~Consider the following sequence of morphisms

\begin{center}
$\xymatrix{\pi_{X_1!}\sF_1 \ar[r]^-{(A)} & \pi_{X_1!}c_{1*}c_1^* \sF_1 \simeq \pi_{C!}c_1^* \sF_1 \ar[r]^-{\pi_{C!}u} & \pi_{C!}c_2^{!}\sF_2 \simeq \pi_{X_2!}c_{2!}c_2^{!}\sF \ar[r]^-{(A)} & \pi_{X_2!}\sF_2}$.
\end{center}

Here the second isomorphism follows from the properness of $c_1$.~The morphisms $(A)$ are adjunctions.~In particular when $X_1=X_2$ and $\sF_1=\sF_2=\sF$,~we get an endomorphism $\text{R}\Gamma_c(u)$ of the perfect complex $\text{R}\Gamma_c(X,\sF)$.

Assume that there exists an open subscheme $X_1^{0} \subseteq X_1$,~such that $\sF_1$ is supported on $X_1^{0}$.~Suppose that $c_1|_{c_1^{-1}(X_1^{0})} \colon C^{0} \colonequals c_1^{-1}(X_1^{0}) \to X_1^{0}$ is proper.~Let $c_1^0$ and $c_2^0$ be the induced morphism from $C^0$ to $X^0_1$ and $X_2$ respectively.

Note that we have isomorphisms 

\begin{equation}\label{action_correspondence_cohomology_1}
R\Gamma_c(X_1,\sF_1) \simeq R\Gamma_c(X_1^0,\sF_1|_{X^{0}_1})=\pi_{X^{0}_1!}(\sF_1|_{X_0})
\end{equation}

\noindent and

\begin{equation}\label{action_correspondence_cohomology_2}
 \pi_{C^{0}!}c_{2}^{0!}(\sF_2) \simeq \pi_{X_2!}c^{0}_{2!}c^{0!}_{2}(\sF_{2}).
\end{equation}

Since $c_1^{0}$ is proper one also has  

\begin{equation}\label{action_correspondence_cohomology_3}
\pi_{X^{0}_1!}c^{0}_{1*}c^{0*}_1(\sF_1|_{X_0}) \simeq \pi_{C^{0}!} c^{0*}_1(\sF_1|_{X^{0}_1}).
\end{equation}

Further, there are morphisms induced by adjunction 

\begin{equation}\label{action_correspondence_cohomology_4}
\pi_{X^{0}_1!}(\sF_1|_{X_0}) \to c^{0}_{1*}c^{0*}_1\pi_{X^0_1!}(\sF_1|_{X_0}),
\end{equation}

\noindent and

\begin{equation}\label{action_correspondence_cohomology_5}
\pi_{X_2!}c^{0}_{2!}c^{0!}_{2}(\sF_{2}) \to \pi_{X_2!}(\sF_2).
\end{equation}

Further we have a correspondence $[c^0] \colonequals (C^0, c_1|_{C^0}, c_2|_{C^0})$ between $X_1^0$ and $X_2$,~and a cohomological correspondence $u^0$ between $\sF_1|_{X^{0}_1}$ and $\sF_2$ lifting $[c^0]$ (see Section \ref{restriction_cohomological_open}).~Applying $\pi_{C^{0}!}$ to $u^{0}$,~and using (\ref{action_correspondence_cohomology_3}) and (\ref{action_correspondence_cohomology_2}) we get a morphism 

\begin{equation}\label{action_correspondence_cohomology_6}
\pi_{X^{0}_1!}c^{0}_{1*}c^{0*}_1(\sF_1|_{X_0}) \to \pi_{X_2!}c^{0}_{2!}c^{0!}_{2}(\sF_{2}).
\end{equation}

Combining (\ref{action_correspondence_cohomology_1}),~(\ref{action_correspondence_cohomology_4}),~(\ref{action_correspondence_cohomology_5}) and (\ref{action_correspondence_cohomology_6}) we get a morphism $R\Gamma_c(u) \colon R\Gamma_c(X_1,\sF_1) \to R\Gamma_c(X_2,\sF_2)$. In particular if $X_1=X_2=X$ and $\sF_1=\sF_2=\sF$,~we get an endomorphism $R\Gamma_c(u)$ of the perfect complex $R\Gamma_c(X,\sF)$.

\subsection{The Lefschetz-Verdier trace formula}

In this section, we describe a recipe to obtain the local and global terms in the Lefschetz-Verdier trace formula.~Let $k$ be an arbitrary algebraically closed field.

\subsubsection{Scheme of fixed points}\label{scheme_fixed_points}

Let $c \colon C \to X \times_{k} X$ be a correspondence.~The \textit{scheme of fixed points} of the correspondence $c$ is the closed subscheme $\text{Fix}(c) \colonequals C \times_{X \times_k X} X$ of $C$.~Here $X$ is looked at as a scheme over $X \times_k X$ via the diagonal embedding $\Delta$.

Let $\Delta'$ denote the embedding of $\text{Fix}(c)$ inside $C$.~Let $c'$ be the restriction of $c$ to $\text{Fix}(c)$.~Thus we have a Cartesian diagram

\begin{equation}\label{scheme_fixed_points_1}
\xymatrixcolsep{4pc} \xymatrix{ \text{Fix}(c) \ar[r]^-{c'}  \ar@{^{(}->}[d]^-{\Delta'} & X  \ar@{^{(}->}[d]^-{\Delta} \\
                   C \ar[r]^-{c}                      & X \times X                  }.
\end{equation}

\subsubsection{}\label{local_trace_quasi_finite}

Let $c:C \to X \times_{k} X$ be a correspondence from $X$ to itself.~Let $u$ be a cohomological correspondence of a sheaf $\sF$ to itself lifting $c$.~Further assume that $c_2$ is quasi-finite.~Proper base change implies that for any closed point $x \in X$,~the stalk at $x$ of $c_{2!}c_1^{*}\sF$ is isomorphic to $\oplus_{c_2(y)=x} \sF_{c_{1}(y)}$. 

Hence $u|_{x}$ induces a morphism $\oplus_{c_2(y)=x} \sF_{c_{1}(y)} \to \sF_{x}$.~In particular for any closed point $y \in \text{Fix}(c)$ we have an induced endomorphism (denoted by $u_y$) of $\sF_{c'(y)}$.~Here $c'$ is the map induced from $\text{Fix}(c)$ to $X$ (see Diagram \ref{scheme_fixed_points_1}).

\begin{defn}[Naive local term] \label{Naive_local_trace}

Using the assumptions and notations in Section \ref{local_trace_quasi_finite},~for any closed point $y \in \text{Fix}(c)$,~we define the \textit{naive local term} at $y$ to be the trace of the endomorphism $u_y$.

\end{defn}

The Lefschetz-Verdier trace formula can be viewed as a consequence of the commutativity of certain \textit{trace maps} with proper push forward.~Now we describe these trace maps.

\subsection{Trace maps}\label{trace_maps}\cite[Section 1.2]{Varshavsky}

Let $c \colon C \to X \times_{k} X$ be a correspondence from $X$ to itself.~Let $\sF$ be a sheaf on $X$.~Let $b \colon B \to X \times_{k} X$ be another correspondence.~Consider the Cartesian diagram

\begin{equation}\label{scheme_fixed_points_2}
\xymatrixcolsep{4pc} \xymatrix{ A \ar[r]^-{c'}  \ar[d]^-{b'} & B  \ar[d]^-{b} \\
                   C \ar[r]^-{c}                      & X \times X                  }.
\end{equation}

Further, assume that we have a map 

\begin{equation}\label{evaluation_trace_general}
\text{ev}_{\sF,b} \colon \mathbb{D}_X\sF \boxtimes \sF \to b_* K_B.
\end{equation}

One has the natural evaluation map $\mathbb{D}_X\sF \otimes \sF \to K_X$.~Since $\Delta^*(\mathbb{D}_X\sF \boxtimes \sF) \simeq \mathbb{D}_X\sF \otimes \sF$,~by adjunction one gets a morphism 

\begin{equation}\label{evaluation_trace}
\text{ev}_{\sF} \colon \mathbb{D}_X\sF \boxtimes \sF \to \Delta_* K_X.
\end{equation}

Base change applied to the Cartesian diagram \ref{scheme_fixed_points_2} implies

\begin{equation}\label{trace_maps_1}
\xymatrix{c^!(b_*K_B) \ar[r]^-{\cong}_-{(\text{BC})} & b'_* c^{'!}K_B \ar[r]^-{\cong} & b'_*K_{A}}.
\end{equation}

Thus combining (\ref{evaluation_trace}) and (\ref{trace_maps_1}) we get a morphism

\begin{equation}\label{trace_maps_2}
\xymatrix{c^{!}(\mathbb{D}_X\sF \boxtimes \sF) \ar[r]^-{c^!\text{ev}_{\sF,b}} & c^!b_*K_B \ar[r]^{(\ref{trace_maps_1})} & b'_*K_{A}}.
\end{equation}

In \cite{Illusie} (see Sections 3.1.1 and 3.2.1),~Illusie obtained a canonical isomorphism

\begin{equation}\label{illusie_isomorphism}
\mathcal{RH}\textit{om}(c_1^*\sF,c_2^!\sF) \simeq c^!(\mathbb{D}_X\sF \boxtimes \sF).
\end{equation}

\noindent Combining (\ref{illusie_isomorphism}) and (\ref{trace_maps_2}) we get a morphism

\begin{equation}\label{trace}
\underline{\mathcal{T \textit{r}}}_{B}:\mathcal{RH}\textit{om}(c_1^*\sF,c_2^!\sF) \to b'_*K_{A}.
\end{equation}

\begin{ex}
The simplest case where one can use the above formalism is when $B=X$ and $b$ is the diagonal morphism.~indeed in that case one has the natural evaluation map $\mathbb{D}_X\sF \otimes \sF \to K_X$.~Since $\Delta^*(\mathbb{D}_X\sF \boxtimes \sF) \simeq \mathbb{D}_X\sF \otimes \sF$,~by adjunction one gets a morphism 

\begin{equation}\label{evaluation_trace}
\text{ev}_{\sF} \colon \mathbb{D}_X\sF \boxtimes \sF \to \Delta_* K_X.
\end{equation}

Note that in this case $A$ is $\text{Fix}(c)$.~Moreover applying $H^{0}(C,\hspace{0.4cm})$ to (\ref{trace}) one obtains the \textit{Trace map} 

\begin{equation}\label{Varashavsky_Trace}
\mathcal{T}\textit{r}_c:\text{Hom}(c_{2!}c^{*}_1\sF,\sF) \to H^{0}(\text{Fix}(c),K_{\text{Fix}(c)}).
\end{equation}

For an open subset $\beta$ of $\text{Fix}(c)$,~let $j_{\beta}$ denote the inclusion of $\beta$ into $\text{Fix}(c)$.~The natural adjunction morphism $K_{\text{Fix}(c)} \to j_{\beta*}j_{\beta}^{*}K_{\text{Fix}(c)}$ induces a morphism 

\begin{equation}\label{trace_maps_3}
\text{Res}_{\beta}:H^{0}(\text{Fix}(c),K_{\text{Fix}(c)}) \to H^{0}(\beta,K_{\beta}).
\end{equation}



Let $\mathcal{T}\textit{r}_{\beta} \colonequals \text{Res}_{\beta} \circ \mathcal{T}\textit{r}_c$.~If $\beta$ is proper over $k$,~then the adjunction $\pi_{\beta!}\pi^{!}_{\beta} \bar{\Q}_{\ell} \to \bar{\Q}_\ell$ gives rise to a morphism $\pi_{\beta!}:H^{0}(\beta,K_{\beta}) \to \bar{\Q}_{\ell}$.~Thus we get a morphism

\begin{equation}\label{local_term_trace}
\text{LT}_{\beta} \colonequals \pi_{\beta!} \circ \mathcal{T}\textit{r}_{\beta}:\text{Hom}(c_{2!}c^{*}_1\sF,\sF) \to \bar{\Q}_{\ell}.
\end{equation}

In particular if $\text{Fix}(c)$ is proper over $k$ we get a morphism

\begin{equation}\label{local_term_proper}
\text{LT}: \text{Hom}(c_{2!}c^{*}_1\sF,\sF) \to \bar{\Q}_{\ell}.
\end{equation}

\end{ex}

\begin{defn}[Local term]\label{Local_term}

For any proper connected component $\beta$ of $\text{Fix}(c)$,~and any cohomological correspondence $u$ lifting $c$,~the \textit{local term} at $\beta$ is defined to be $\text{LT}_{\beta}(u)$.~Moreover if $\text{Fix}(c)$ is proper over $k$ we define the local term of $u$ to be $\text{LT}(u)$.

Clearly when $\text{Fix}(c)$ is proper

\begin{equation}\label{Local_term_1}
\text{LT}(u)=\sum_{\beta \in \pi_0(\text{Fix}(c))} \text{LT}_{\beta}(u).
\end{equation}

\end{defn}

\begin{rmk}

Our definition of a local term is the one in \cite{Varshavsky},~Section 1.2.~It is compatible with the definition in \cite{Illusie},~Section 4.2.5 (see \cite{Varshavsky},~Appendix A).

\end{rmk}

\begin{ex}
The following example will be relevant to us in the proof of Theorem \ref{Hrushovski_Lang_Weil}. Let $k$ be an algebraic closure of $\F_q$.~Let $c:C \to X \times_k X$ be a correspondence defined over $\F_q$.~Let $\sF$ be a Weil sheaf on $X$.~Hence $\sF$ comes equipped with an isomorphism 

\begin{equation}\label{Weil_Sheaf}
F_{\sF} \colon F^*_{X/\F_q} \sF \simeq \sF.
\end{equation}

Dualizing (\ref{Weil_Sheaf}) we get an isomorphism

\begin{equation}\label{dual_Weil_structure}
F_{\sF}^{\vee} \colon \mathbb{D}_X \sF \simeq F^!_{X/\F_q} \mathbb{D}_X \sF.
\end{equation}

For each $n$ we have a Cartesian diagram

\begin{equation}\label{graph_Frobenius_local_terms_Appendix}
\xymatrix{\text{Fix}(c^{(n)}) \ar[r] \ar@{^{(}->}[d]_-{\Delta^{(n)'}} \ar@/^2pc/[rr]^-{c^{(n)'}} & X \ar@{^{(}->}[d]_-{\Delta^{(n)}} \ar[r]^{F^n_{X/\F_q}} & X \ar@{^{(}->}[d]^-{\Delta} \\
                  C \ar[r]^-{c} \ar@/_2pc/[rr]_-{c^{(n)}} & X \times_k X                       \ar[r]^-{F^n_{X/\F_q} \times 1_X}         &  X \times_k X }.
\end{equation}

By definition,~the composition of the arrows in the lower row is $c^{(n)} \colon C \to X \times_k X$ given by $(F^n_{X/\F_q} \circ c_1,c_2)$.~Denote the composition of the arrows in the topmost row by $c^{(n)'}$.
Moreover, since $!$-pullback commutes with external products (\cite{Illusie},~(1.7.3)) we have an isomorphism

\begin{equation}\label{Frobenius_cohomology_class_1}
(F^n_{X/\F_q} \times 1_X)^{!}(\mathbb{D}_X \sF \boxtimes \sF) \simeq F^{n!}_{X/\F_q} \mathbb{D}_X \sF \boxtimes \sF \simeq \mathbb{D}_X \sF \boxtimes \sF,
\end{equation}

\noindent induced by (\ref{dual_Weil_structure}).~Further by applying the functor $(F^{n}_{X/\F_q} \times 1_X)^{!}$ to the evaluation map (\ref{evaluation_trace}),~and using base change along the right Cartesian square in (\ref{graph_Frobenius_local_terms}) we get a morphism

\begin{equation}\label{Frobenius_cohomology_class_2}
(F^n_{X/\F_q} \times 1_X)^{!}(\mathbb{D}_X \sF \boxtimes  \sF) \to \Delta^{(n)}_{*}K_{X}.
\end{equation}

Thus combining (\ref{Frobenius_cohomology_class_1}) and (\ref{Frobenius_cohomology_class_2}) we get for each $n \geq 0$,~morphism

\begin{equation}\label{Frobenius_cohomology_class_3}
\text{ev}^{(n)}_{\sF} \colon \mathbb{D}_X \sF \boxtimes \sF \to \Delta^{(n)}_{*}K_{X}.
\end{equation}

Thus the above formalism gives rise to

 \begin{equation}\label{trace_Along_graph_Frobenius_Appendix}
\mathcal{T}\textit{r}^{'(n)}_c \colon \text{Hom}(c_{2!}c_1^*\sF,\sF) \to H^0(\text{Fix}(c^{(n)}),K_{\text{Fix}(c^{(n)}})
\end{equation}

\end{ex}

Now we are in a position to state the Lefschetz-Verdier trace formula.

\begin{thm}\label{Lefschetz-Verdier}(\cite{Illusie},~\cite{Varshavsky})

Let $c:C \to X \times_{k} X$ be a correspondence with $C$ and $X$ proper over $k$.~Then for any cohomological correspondence $u$ from $\sF$ to itself lifting $c$

\begin{center}

$\text{Tr}(\text{R}\Gamma_c(u))=\sum_{\beta \in \pi_0(\text{Fix}(c))} LT_{\beta}(u)$.

\end{center}

Here $Tr(\text{R}\Gamma_c(u))$ is the trace of the endomorphism $R\Gamma_c(u)$ of the perfect complex (of $\bar{\Q}_{\ell}$ vector spaces) $\text{R}\Gamma_c(X,\sF)$ induced by $u$ (see Section \ref{action_correspondence_cohomology}).

\end{thm}

%
%
%
%

\subsection{Locally contracting correspondences}

%

As before let $k$ be an arbitrary algebraically closed field.~Let $c \colon X  \to Y$ be a morphism of schemes finite type over $k$.

\begin{defn}[Ramification along a closed subscheme]\label{ramification_closed_subset}

For a reduced closed subscheme $Z \subseteq Y$ its \textit{ramification along $c$} is the smallest positive integer $n$ such that,~$c^{-1}(Z) \subseteq (c^{-1}(Z)_{red})_{n}$.~We denote this by $\text{Ram}(Z,c)$.

\end{defn}

\begin{defn}[Ramification degree of a morphism]\label{ramification_morphism}

If $c$ is quasi-finite,~then $\text{ram}(c)$ is defined as the maximum of $\text{ram}(y,c)$,~as $y$ varies over all the closed points of $Y$.~We denote this by $\text{ram}(c)$.

\end{defn}

\begin{rmk}

Note that our notation for ramification along a closed subscheme differs from the one in \cite{Varshavsky} to avoid any possibility of confusion with the notation for the ramification degree of a morphism.

\end{rmk}

Now let $c:C \to X \times_{k} X$ be a self-correspondence of $X$.

\begin{defn}[Invariant closed subset]\label{invariant_closed}

A closed subset $Z \subseteq X$ is said to be \textit{$c$-invariant} if $c_1(c_2^{-1}(Z))$ is set theoretically contained in $Z$.

\end{defn}

\begin{rmk}\label{restriction_correspondence_invariant_subscheme}
Suppose $Z \subseteq X$ be an invariant closed subset of $X$.~Then we define the \textit{restriction} of $[c]$ to $Z$ by $[c]|_Z \colonequals (c_2^{-1}(Z)_{\text{red}},c_{1,Z},c_{2,Z})$.~Here $c_{1,Z}$ and $c_{2,Z}$ are the maps induced by $c_1$ and $c_2$ respectively.~Note that $[c]|_Z$ is a self correspondence of $Z$.~We shall also denote this by $c|_Z$.
\end{rmk}

\begin{defn}[Locally invariant closed subset]\label{locally_invariant_closed}

A closed subset $Z \subseteq X$ is said to be \textit{locally $c$-invariant} if for each $x \in Z$ there exists an open neighbourhood $U$ of $x$ in $X$ such that $Z \cap U$ is $c|_{U}$-invariant (see Definition \ref{restriction_correspondence_open}) .

\end{defn}

\subsubsection{} \label{quasi-finite_local_invariant}

If $c_2$ is quasi-finite then any closed point of $X$ is locally $c$-invariant (see \cite{Varshavsky} Example 1.5.2).

\begin{defn}[Invariant in a neighbourhood of fixed points]\label{invariant_neighbourhood_fixed_points}
A closed subset $Z \subseteq X$ is said to be \textit{invariant in a neighbourhood of fixed points} if there exists an open neighbourhood $W$ of $\text{Fix}(c)$ in $C$ such that $Z$ is $c|_W$-invariant (see Definition \ref{restriction_correspondence_open}).
\end{defn}

\subsubsection{Restriction of cohomological correspondence to an invariant subset}\label{restriction_cohomological_correspondence_invariant_subscheme}(\cite{Varshavsky},~1.5.6 (a))
Let $c:C \to X \times_k X$ be a correspondence,~and $u$ a cohomological correspondence of $\sF$ lifting $c$.~Let $Z \subseteq X$ be a closed subset of $X$ left invariant by $c$.~Then we have a diagram

\begin{equation}\label{restrcition_invariant_subscheme_cohomological_correspondence}
\xymatrix{ Z \ar@{^{(}->}[d]_{i} & c_2^{-1}(Z)_{\text{red}} \ar[l]_{c_{1,Z}} \ar[r]^{c_{2,Z}} \ar@{^{(}->}[d]_{i_C}  & Z \ar@{^{(}->}[d]_{i} \\
                  X                   & C \ar[l]^{c_1} \ar[r]_{c_2}                       &  X                }
\end{equation} 
\noindent where the right-hand square is Cartesian up to nilpotents.~Thus as in Section \ref{base_change_section_2},~one has a natural transformation of functors,


\begin{equation}\label{first_tranformation_restriction_invariant_subscheme}
i_{C}^*c_2^{!} \to c_{2,Z}^{!}i^{*}.
\end{equation}

The cohomological correspondence $u$ is given by a map $c_1^*\sF \to c_2^{!}\sF$.~Applying $i_C^*$ to this,~we get a map from $c_{1,Z}^*i^*\sF \simeq i_C^*c_1^* \sF \to i_C^*c_2^!\sF$,~which using (\ref{first_tranformation_restriction_invariant_subscheme}) gives a map

\begin{equation}\label{restriction_along_invariant_subscheme}
u|_Z:c_{1,Z}^*(\sF|_{Z}) \to c_{2,Z}^{!}(\sF|_{Z}).
\end{equation}

Note that $u|_Z$ is a cohomological correspondence of $\sF|_Z$ lifting a self correspondence of $Z$ given by the top row in Diagram (\ref{restrcition_invariant_subscheme_cohomological_correspondence}).

\begin{rmk}\label{locally_invariant_subfield}(Compare \cite{Varshavsky_Hrushovski},~1.9,~(b))
Suppose $X$ is defined over a subfield $k'$ of $k$.~A closed subset $Z \subseteq X$ is said to be locally $c$-invariant \textit{over $k'$} if the open neighbourhoods $U$ in Definition \ref{locally_invariant_closed} can be chosen to be defined over $k'$.
\end{rmk}

\subsubsection{Local term along a subscheme invariant in a neighbourhood of fixed points}\label{local_term_subscheme_invariant_neighbourhood_fixed_points}(Compare \cite{Varshavsky},~1.5.6 (b))

Suppose $c:C \to X \times_k X$ is a correspondence such that $\text{Fix}(c)$ is proper over $k$.~Let $u$ be a cohomological correspondence of $\sF$ lifting $c$.~Let $Z$ be a closed subset of $X$ which is invariant in a neighbourhood of fixed points (see Definition \ref{invariant_neighbourhood_fixed_points}).~Let $W$ be an open neighbourhood of $\text{Fix}(c)$ in $C$ such that $c|_W$ leaves $Z$ invariant.~Then combining Definition \ref{restriction_correspondence_open} and Remark \ref{restriction_correspondence_invariant_subscheme} we can make sense of a self correspondence $c|_{W,Z}$ of $Z$ as the restriction of $c|_W$ to $Z$.

%

Moreover there exists a cohomological correspondence (say $u|_W$) of $\sF$ lifting $c|_W$ (see Section \ref{restriction_cohomological_open}).~Since $Z$ is $c|_W$ invariant,~we can restrict $u|_W$ along $Z$ to get a cohomological correspondence (say $u|_{W,Z}$) of $\sF|_Z$ (see Section \ref{restriction_cohomological_correspondence_invariant_subscheme}) lifting $c|_{W,Z}$.

Let $\beta$ be any connected component of $Fix(c|_{W,Z})$.~Since $W$ was chosen to be a neighbourhood of fixed points and $\text{Fix}(c)$ was assumed to be proper,~$\beta$ is necessarily proper over $k$.~Thus we can make sense of the local term (say  $\text{LT}_{\beta}(u|_{W,Z}) \in \bar{\Q}_{\ell}$ ) of $u|_{W,Z}$ at $\beta$.~We also have a well defined local term $\text{LT}(u|_{W,Z})$ (see Definition \ref{Local_term}).~Since the local term at a connected component of the scheme of fixed points,~depends only on an open neighbourhood of the connected component,~$\text{LT}_{\beta}(u|_{W,Z})$ (and hence $\text{LT}(u|_{W,Z})$ )  are independent of the chosen open neighbourhood $W$ of $\text{Fix}(c)$. Hence we can remove $W$ from the notation and simply denote it by $\text{LT}_{\beta}(u|_Z)$ (and $\text{LT}(u|_Z)$).

\begin{defn}\label{stabilizes}

A closed subscheme $Z$ is said to be \textit{stabilized} by $c$ if $c_2^{-1}(Z)$ is a closed subscheme of  $c_1^{-1}(Z)$.

\end{defn}

Recall that for a closed subscheme $Z$ of $X$,~$Z_{k}$ is the closed subscheme of $X$ defined by the ideal $\sI_{Z}^{k}$.

\begin{defn}\label{contracting}

$c$ is said to be \textit{contracting near a closed subscheme} $Z \subseteq X$ if $c$ stabilizes $Z$,~and $c_2^{-1}(Z_{n+1})$ is a closed subscheme of $c_1^{-1}(Z_{n})$ for some $n \geq 1$.

\end{defn}

We have the following local variant of Definition \ref{contracting}

\begin{defn}\label{locally_contracting}
$c$ is said to be \textit{locally contracting near a closed subscheme} $Z \subseteq X$ if for each $x \in Z$ there exists an open neighbourhood $U$ of $x$ in $X$ such that $c|_{U}$ (see Definition \ref{restriction_correspondence_open}) is contracting near $Z \cap U$.
\end{defn}

\begin{rmk}\label{locally_contracting_subfield}(Compare \cite{Varshavsky_Hrushovski},~1.9 (c))
Suppose $X$ is defined over a subfield $k'$ of $k$.~$c$ is said to be locally contracting near a closed subscheme $Z \subseteq X$ \textit{over $k'$} if the open neighbourhoods $U$ in Definition \ref{locally_contracting} can be chosen to be defined over $k'$.
\end{rmk}

\begin{defn}\label{contracting_fixed_points}

$c$ is said to be contracting near closed subscheme $Z \subseteq X$ \textit{in a neighbourhood of fixed points} if there exists an open subscheme $W$ of $C$ containing $\text{Fix}(c)$ such that $c|_W$ is contracting near $Z$ (see Definition \ref{restriction_correspondence_open}).

\end{defn}

 We will need the following key result in the form of Corollary \ref{shape_of_trace_formula}.

\begin{thm}(\cite{Varshavsky},~Theorem 2.1.3)\label{local_term_boundary_vanishing}

Let $c:C \to X \times_k X$ be a correspondence contracting near a closed subscheme $Z \subseteq X$ in a neighbourhood of fixed points,~and let $\beta$ be an open connected subset of $\text{Fix}(c)$ such that $c'(\beta) \cap Z \neq \emptyset$ (see Section \ref{scheme_fixed_points}).~Then

\begin{enumerate}

\item[(1)] $\beta$ is contained set-theoretically in $c'^{-1}(Z)$,~hence $\beta$ is an open connected subset of $\text{Fix}(c|_{W,Z})$ (see Section \ref{local_term_subscheme_invariant_neighbourhood_fixed_points}).~Here $W$ is an open neighbourhood of $\text{Fix}(c)$ in $C$ such that $c|_W$ leaves $Z$ invariant.

\item[(2)] For every cohomological correspondence $u$ from $\sF$ to itself lifting $c$,~one has $\mathcal{T}\textit{r}_{\beta}(u)=\mathcal{T}\textit{r}_\beta(u|_Z)$ (see Section \ref{trace_maps}).~In particular if $\beta$ is proper over $k$ then,~$LT_{\beta}(u)=LT_{\beta}(u|_Z)$ (see (\ref{local_term_trace}) and Section \ref{local_term_subscheme_invariant_neighbourhood_fixed_points}).

\end{enumerate}

\end{thm}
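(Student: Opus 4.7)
\smallskip

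\noindent\textbf{Plan.} The plan is to treat the two assertions in order, exploiting the fact that on $\text{Fix}(c)$ the two projections $c_1$ and $c_2$ agree as morphisms to $X$.

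For part (1), I would argue that $F_Z \colonequals c'^{-1}(Z)$ is both open and closed in $\text{Fix}(c)$, so that $\beta$ (being connected and meeting $F_Z$) is contained in $F_Z$ set-theoretically. Closedness is automatic, since $Z$ is closed in $X$. Openness is where the contraction hypothesis enters: let $W \supseteq \text{Fix}(c)$ be an open neighbourhood on which $c$ is contracting, and let $n \geq 1$ be such that $c_2^{-1}(Z_{n+1}) \cap W \subseteq c_1^{-1}(Z_n) \cap W$ as closed subschemes. Since on $\text{Fix}(c)$ the two pullbacks $c_1|_{\text{Fix}(c)}^{-1}(Z_m)$ and $c_2|_{\text{Fix}(c)}^{-1}(Z_m)$ both equal $c'^{-1}(Z_m)$, intersecting the containment with $\text{Fix}(c)$ yields $c'^{-1}(Z_{n+1}) \subseteq c'^{-1}(Z_n)$; combined with the trivial reverse inclusion, this gives equality as subschemes of $\text{Fix}(c)$. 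Writing $\mathcal{J} \colonequals c'^{*}(\sI_Z) \cdot \sO_{\text{Fix}(c)}$, the equality reads $\mathcal{J}^n = \mathcal{J}^{n+1} = \mathcal{J} \cdot \mathcal{J}^n$. Applying Nakayama's lemma at any point $y \in F_Z$ (where $\mathcal{J}_y \subseteq \mathfrak{m}_y$) forces $\mathcal{J}_y^n = 0$, hence $\mathcal{J}^n$ vanishes on an open neighbourhood $V$ of $y$ in $\text{Fix}(c)$. Since $\mathcal{J}|_V$ is then nilpotent, no point of $V$ can have $\mathcal{J}_{y'} = \sO_{\text{Fix}(c), y'}$, so $V \subseteq F_Z$. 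This proves openness, and part (1) follows.

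For part (2), the plan is to reduce to a local computation in a Zariski neighbourhood of $\beta$, and then verify the equality by chasing through the explicit construction of the trace map $\mathcal{T}\textit{r}$ in Section \ref{trace_maps}. Since $\mathcal{T}\textit{r}_\beta$ depends only on the restriction of everything to a small open neighbourhood of $\beta$ in $C$, I would first replace $C$ by an open $W' \subseteq W$ containing $\beta$ on which one has a clean Cartesian diagram relating $c|_{W'}$ and $c|_{W', Z}$ as in Section \ref{restriction_cohomological_correspondence_invariant_subscheme}; part (1) lets me arrange that $\beta$ is simultaneously open in $\text{Fix}(c|_{W'})$ and in $\text{Fix}(c|_{W',Z})$. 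Then one builds a commutative diagram comparing the two trace maps by unwinding both as compositions of: (a) Illusie's canonical isomorphism $\mathcal{RH}\textit{om}(c_1^*\sF, c_2^!\sF) \simeq c^!(\mathbb{D}_X\sF \boxtimes \sF)$; (b) the evaluation map $\text{ev}_\sF \colon \mathbb{D}_X\sF \boxtimes \sF \to \Delta_* K_X$ of (\ref{evaluation_trace}); (c) the base change (\ref{trace_maps_1}) along the Cartesian square (\ref{scheme_fixed_points_1}). The analogous construction on $Z$ uses $\text{ev}_{\sF|_Z}$, the Cartesian square for $c|_Z$, and the natural transformation $i_C^* c_2^! \to c_{2,Z}^! i^*$ defining $u|_Z$ in (\ref{first_tranformation_restriction_invariant_subscheme}). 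Commutativity of the resulting diagram reduces to the compatibility of $\text{ev}$ with $i^*$ together with the transitivity of base change along the two stacked Cartesian squares.

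The main obstacle is part (2): the set-theoretic statement (1) is essentially a clean Nakayama argument once one notices that $c_1$ and $c_2$ coincide on $\text{Fix}(c)$, whereas (2) requires a genuine diagram chase through several adjunctions and base change isomorphisms, and one needs to be careful that the natural transformation in (\ref{first_tranformation_restriction_invariant_subscheme}) (which is \emph{not} an isomorphism in general) does produce a compatible element after all the identifications on the open subscheme $\beta$. I expect that the openness established in (1), together with the Cartesianness of (\ref{restrcition_invariant_subscheme_cohomological_correspondence}) modulo nilpotents, will make this natural transformation effectively an isomorphism in the relevant neighbourhood, which is precisely what is needed for the diagram to commute.
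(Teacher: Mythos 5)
Your part (1) is correct. The observation that $c_1$ and $c_2$ coincide on $\text{Fix}(c)$, the schematic equality $c'^{-1}(Z_{n+1}) = c'^{-1}(Z_n)$ obtained by intersecting the contraction inclusion with $\text{Fix}(c)$, and the Nakayama argument on the ideal $\mathcal{J} = c'^*(\sI_Z)\cdot\sO_{\text{Fix}(c)}$ showing that $c'^{-1}(Z)$ is open (as well as closed) in $\text{Fix}(c)$ — this is a clean and complete argument and is essentially the one in \cite{Varshavsky}.

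Part (2) is where the real content lies, and your sketch has a genuine gap. You conjecture that the natural transformation $i_C^*c_2^! \to c_{2,Z}^!i^*$ of (\ref{first_tranformation_restriction_invariant_subscheme}) becomes ``effectively an isomorphism in the relevant neighbourhood,'' citing the openness from part (1) and the Cartesianness up to nilpotents of the right-hand square in (\ref{restrcition_invariant_subscheme_cohomological_correspondence}). That expectation is not correct. A Cartesian square along a closed immersion does \emph{not} make the $!$-pullback base change map an isomorphism, and Zariski-localizing near $\beta$ does not help: any open neighbourhood of $\beta$ in $C$ still meets the complement of $c_2^{-1}(Z)$, and at points of $\beta$ the base change morphism remains a genuine, non-invertible map between different objects. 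What part (1) gives you is a statement about the \emph{reduced} locus $\text{Fix}(c)_{\text{red}}$; it says nothing about the derived-categorical base change. Notice also that the contraction hypothesis enters your proposed argument only through part (1), while nothing in the diagram chase for part (2) uses it directly; since the equality $\mathcal{T}\textit{r}_\beta(u) = \mathcal{T}\textit{r}_\beta(u|_Z)$ is false without contraction, this is a symptom of the gap.

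The actual proof of part (2) in \cite{Varshavsky} is not a direct compatibility-of-adjunctions diagram chase. After localizing so that $c$ genuinely contracts near $Z$ (not merely in a neighbourhood of fixed points), he uses a specialization/additivity mechanism for the trace map — working with the $\sI_Z$-adic filtration (equivalently the deformation of $(X,Z)$ to its normal cone) — and shows that the contraction hypothesis forces the discrepancy between the two trace maps (the contribution of the graded pieces away from $Z$) to vanish. The transformation in (\ref{first_tranformation_restriction_invariant_subscheme}) is used only to \emph{define} $u|_Z$; it is never an isomorphism, and the equality of local terms is a vanishing theorem, not a compatibility. Finally, note that the paper at hand does not prove this statement — it is a quotation of \cite{Varshavsky}, Theorem 2.1.3 — so the relevant comparison is with Varshavsky's proof, not an internal one.
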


\begin{rmk}

The theorem above holds over arbitrary algebraically closed fields and $c$ need \textit{not} to be proper.

\end{rmk}

\begin{cor}\label{shape_of_trace_formula}
Let $c:C \to X \times_k X$ be a correspondence such that $\mathrm{Fix}(c)$ is proper over $k$.~Let $u$ be a cohomological correspondence of $\sF$ lifting $c$.~Suppose the following conditions are satisfied.

\begin{enumerate}[(a)]

\item There exists an open subset $U \subseteq X$ such that $c_2|_{c_1^{-1}(U) \cap c_2^{-1}(U)}$ is quasi-finite.

\item $c|_U$ is contracting near every closed point of $U$ in a neighbourhood of fixed points.

\item $c$ is contracting near the closed subscheme $Z:=(X \backslash U)_{\text{red}}$ in a neighbourhood of fixed points. 

\end{enumerate}

Then $\mathrm{Fix}(c|_U)$ is finite over $k$.~Moreover

\begin{equation}\label{local_term_decomposition}
\mathrm{LT}(u)=\sum_{\beta \in \mathrm{Fix}(c|_U)} \mathrm{Tr}(u_{\beta})+\mathrm{LT}(u|_Z).
\end{equation}

Here $\mathrm{Tr}(u_{\beta})$ is the naive local term at $\beta$ (see Definition \ref{Naive_local_trace}),~which is well defined by assumption (a).

\end{cor}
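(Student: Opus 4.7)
The plan is to decompose the global local term $\mathrm{LT}(u)$ according to whether the connected components of $\mathrm{Fix}(c)$ lie over $U$ or meet $Z$, and then apply Theorem \ref{local_term_boundary_vanishing} twice — once with the ``boundary'' datum $Z$ from assumption (c) to identify the $Z$-contribution with $\mathrm{LT}(u|_Z)$, and once with the ``interior'' data $\{x\} \subseteq U$ from assumption (b) to reduce each interior component to a naive local term. By (\ref{Local_term_1}), write
\[
\mathrm{LT}(u) \;=\; \sum_{\beta \in \pi_0(\mathrm{Fix}(c))} \mathrm{LT}_\beta(u),
\]
and split $\pi_0(\mathrm{Fix}(c))$ into the ``boundary'' components (those $\beta$ with $c'(\beta) \cap Z \neq \emptyset$) and the ``interior'' components (those with $c'(\beta) \subseteq U$).

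For the boundary components, assumption (c) lets me invoke Theorem \ref{local_term_boundary_vanishing} (1) and (2) with the given $Z$: each such $\beta$ is set-theoretically contained in $c'^{-1}(Z)$, is an open connected subset of $\mathrm{Fix}(c|_{W,Z})$ for any contracting neighbourhood $W$ of $\mathrm{Fix}(c)$, and satisfies $\mathrm{LT}_\beta(u) = \mathrm{LT}_\beta(u|_Z)$. Conversely, every connected component of $\mathrm{Fix}(c|_{W,Z})$ arises this way, so summing over boundary components produces exactly $\mathrm{LT}(u|_Z)$ (as defined in Section \ref{local_term_subscheme_invariant_neighbourhood_fixed_points}).

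For the interior components, I would first establish finiteness of $\mathrm{Fix}(c|_U)$. Given any interior $\beta$, its image $c'(\beta)$ is a nonempty closed subset of $U$, so it contains a closed point $x$. Assumption (b) says that $c|_U$ is contracting near $\{x\}$ in a neighbourhood of fixed points, so applying Theorem \ref{local_term_boundary_vanishing} (1) to the correspondence $c|_U$ (restricted if necessary to an open neighbourhood of $\mathrm{Fix}(c|_U)$) with the closed subscheme $\{x\}$ forces $\beta \subseteq c'^{-1}(\{x\})$. Combined with the quasi-finiteness of $c_2$ over $U$ from assumption (a), and the fact that $c'$ factors through $c_2$, this proves that $\beta$ is zero-dimensional. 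Since $\mathrm{Fix}(c)$ is proper and hence has finitely many connected components, $\mathrm{Fix}(c|_U)$ is a finite disjoint union of such components, i.e.\ it is finite over $k$.

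The last step is to identify $\mathrm{LT}_\beta(u) = \mathrm{Tr}(u_\beta)$ for each interior $\beta$. By Theorem \ref{local_term_boundary_vanishing} (2), applied as above with $Z = \{x\}$ and $c|_U$, one has $\mathrm{LT}_\beta(u) = \mathrm{LT}_\beta(u|_{\{x\}})$. Now $u|_{\{x\}}$ is a cohomological self-correspondence of the finite-dimensional $\bar{\mathbb{Q}}_\ell$-vector space $\mathcal{F}_x$ lifting a correspondence of $\mathrm{Spec}(k(x))$, so by the computation of the trace map in the trivial case (Section \ref{trivial_trace}) together with the quasi-finiteness of $c_2$ at $\beta$ from (a), one recovers the naive local term $\mathrm{Tr}(u_\beta)$ of Definition \ref{Naive_local_trace}. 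Summing the interior and boundary contributions yields (\ref{local_term_decomposition}). The main conceptual obstacle is the last identification $\mathrm{LT}_\beta(u|_{\{x\}}) = \mathrm{Tr}(u_\beta)$, which requires matching Varshavsky's trace-map recipe with the naive stalk-wise trace at a (possibly non-reduced) isolated fixed point where $c_2$ is quasi-finite; once set up this way, however, everything else is a clean bookkeeping of Theorem \ref{local_term_boundary_vanishing}.
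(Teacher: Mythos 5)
Your proposal is correct and follows essentially the same route as the paper's proof: partition $\pi_0(\text{Fix}(c))$ into boundary components (handled by Theorem \ref{local_term_boundary_vanishing} with the data $Z$ from (c)) and interior components (handled by Theorem \ref{local_term_boundary_vanishing} with $\{x\}$ from (b), quasi-finiteness of $c_2$, and the naive local term identification), with finiteness of $\text{Fix}(c|_U)$ extracted from the pointwise contraction plus properness of $\text{Fix}(c)$. The one step you flag as the ``main conceptual obstacle,'' namely $\mathrm{LT}_\beta(u|_{\{x\}}) = \mathrm{Tr}(u_\beta)$, is precisely what the paper outsources to \cite{Varshavsky}, Section 1.5.7, so your hesitation there is warranted but does not indicate a gap in the argument itself.
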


\begin{proof}

We have a Cartesian diagram

\begin{equation}\label{shape_of_trace_formula_1}
\xymatrixcolsep{4pc} \xymatrix{ \text{Fix}(c) \ar[r]^-{c'}  \ar@{^{(}->}[d]^-{\Delta'} & X  \ar@{^{(}->}[d]^-{\Delta} \\
                   C \ar[r]^-{c}                      & X \times X                  }.
\end{equation}

Let $W$ be an open neighbourhood of $\text{Fix}(c)$ such that $c|_W$ leaves $Z$ invariant (such a $W$ exists by assumption (c)). 

Let $\beta \in \pi_0(\text{Fix}(c))$ be such that $c'(\beta) \cap Z \neq \emptyset$,~then condition (c) and Theorem \ref{local_term_boundary_vanishing} together imply that $\beta \in \pi_0(\text{Fix}(c|_{W,Z}))$,~and that $\text{LT}_{\beta}(u)=\text{LT}_{\beta}(u|_Z)$.~Thus (\ref{Local_term_1}) implies that 

\begin{equation}\label{local_term_decomposition_1}
\text{LT}(u)=\sum_{\beta \in \pi_0(\text{Fix}(c|_U))}\text{LT}_{\beta}(u|_U)+\text{LT}(u|_Z).
\end{equation}

Hence it remains to show that $\text{Fix}(c|_U)$ is finite over $k$,~and that for any $\beta \in \pi_0(\text{Fix}(c|_U))$, $\text{LT}_{\beta}(u|_U)=\text{Tr}(u_{\beta})$.

Replacing $X$ by $U$,~$c$ by $c|_U$ and $u$ by $u|_U$,~we can assume that $c_2$ is quasi-finite,~and $c$ is contracting near every closed point of $U$ in a neighbourhood of fixed points.

Let $x \in c'(\beta) \subseteq U$ be a closed point.~Since $c$ is contracting near every closed point of $U$ in a neighbourhood of fixed points Theorem \ref{local_term_boundary_vanishing},~(1) implies that $\beta$ is a connected component of $\text{Fix}(c|_{\{x\}})$.~Since $c_2$ is quasi-finite,~$\beta$ is necessarily a point.~Hence $\text{Fix}(c)$ is finite over $k$.

Moreover Theorem \ref{local_term_boundary_vanishing},~(2) implies that the local term $\text{LT}_{\beta}(u)$ at such a $\beta$ equals $\text{LT}_{\beta}(u|_{\{x\}})$ which in turn equals the naive local term at $\beta$ (see \cite{Varshavsky},~Section 1.5.7).
\end{proof}

\subsection{Generalities on Cohomology with support}

Let $Y$ be any closed subscheme of $X \times_k X$ contained in $U$.~As before $Z$ is the complement of $U$ in $X$,~with the reduced structure.~Let $i_{Y,U}$ and $i_Y$ denote the inclusion of $Y$ into $U$ and $X \times_k X$ respectively.~Thus one has a diagram

\begin{equation}\label{supports_first_diagram}
\xymatrix{Y  \ar@{^{(}->}[r]^{i_{Y,U}} & U \ar@{^{(}->}[r]^-{j} & X \times_k X & Z \ar@{_{(}->}[l]_-{i}}.
\end{equation}

The composite $ j \circ i_{Y,U}$ equals $i_Y$ by definition and $Y \cap Z=\emptyset$.~For any sheaf $\sG$ on $X \times_k X$,~by $H^p_Y(X \times_k X,~\sG)$ we mean the cohomology $H^{p}(Y,i^{!}_Y\sG)$.~Similarly for any sheaf $\sG$ on $U$ by $H^p_Y(U,\sG)$ we mean the cohomology $H^{p}(Y,~i_{Y,U}^{!}\sG)$.~We have a natural isomorphism

\begin{equation}\label{cohomology_with_supports_using_hom}
H^{0}_Y(X \times_k X,\sG) \simeq \text{Hom}(i_{Y*}\bar{\Q}_{\ell},\sG).
\end{equation}

Since $\text{Hom}(i_{Y*}\hspace{1mm},~i_{*} \hspace{1mm}) \simeq 0$,~applying the cohomological functor $\text{Hom}(i_{Y*}\bar{\Q}_{\ell},~\hspace{1mm})$ to the triangle

\begin{equation}\label{triangle_cohomology_supports}
\xymatrix{j_!j^* \sG \ar[r] &\sG \ar[r] & i_{*}i^*\sG \ar[r]^-{+1}&},
\end{equation}

\noindent and using (\ref{cohomology_with_supports_using_hom}) we obtain an natural isomorphism

\begin{equation}\label{first_specific_isomorphism_supports}
H^0_{Y}(X \times_k X,~\sG) \simeq H^0_Y(X \times_k X,~j_{!}j^*\sG). 
\end{equation}

\section{Preliminary reductions}\label{preliminary_reductions}

In this section, we carry out some preliminary reductions towards the proof of Theorem \ref{non_uniform_estimate}.~The crucial one is the reduction to the locally invariant boundary (see Definition \ref{locally_invariant_closed}) case using \cite{Varshavsky_Hrushovski},~Section 2.

\subsection{Reduction to $X$ smooth and $c_2$ generically \'etale}

\subsubsection*{Assume $\text{dim}(C)=0$}

The claim $(1)$ and $(3)$ in Theorem \ref{non_uniform_estimate} are trivially true when $\text{dim}(C)=0$.~Thus we only need to verify the rationality claim in $(2)$.~To do so we can assume $C$ is connected,~and hence is a closed point $c$.

If $c_2(c)$ is not in the Frobenius orbit of $c_1(c)$,~then $\text{Fix}(c^{(n)})$ is empty for all $n$,~and the rationality is trivial.~Else $\text{Fix}(c^{(n)})=1$,~for all $n \equiv r \mod s$,~for some integers $r$ and $s$.~This implies the rationality of $Z(c,t)$ in this case.~In particular Theorem \ref{non_uniform_estimate} is true when $\text{dim}(X)=0$.

Before we carry out the necessary reductions we record the following simple observation which shall be repeatedly used in this Section.

\begin{rmk}\label{basic_observation_reduction}

Since Theorem \ref{non_uniform_estimate} is true when $\text{dim}(X)=0$,~inducting on $\text{dim}(X)$,~to prove Theorem \ref{non_uniform_estimate} for $c: C \to X \times_k X$,~we can throw away an arbitrary closed subset of $X$ (defined over $\F_q$) of strictly smaller dimension.~In particular we may assume that $X$ is affine (and hence quasi-projective over $k$).

One can also induct on the dimension of $C$,~and hence to prove Theorem \ref{non_uniform_estimate} for $c: C \to X \times_k X$,~we can throw away an arbitrary closed subset of $C$ of strictly smaller dimension.~As above we may also assume that $C$ is quasi-projective over $k$.

\end{rmk}

\begin{lem}\label{basic_reduction}
It suffices to prove Theorem \ref{non_uniform_estimate} under the following additional assumptions,

\begin{enumerate}

\item $c$ is defined over $\F_q$.

\item $\text{dim}(C)=\text{dim}(X)$.

\item $C$ and $X$ are irreducible and quasi-projective.

\item $c_1$ and $c_2$ are dominant.

\item $c$ is proper.

\item $X$ is smooth.

\end{enumerate}

\end{lem}

\begin{proof}

\textit{Reduction (1):}

Assume that $c$ is defined over $\F_{q^s}$.~Any natural number $n$ can be written (uniquely) as $n=qs+r$,~for $r \in \{0,~1,~2, \cdots s-1\}$.~Clearly the assertion for $c:C \to X \times_k X$ with respect to $\F_q$,~is equivalent to the assertion for $c^{(i)}:C \to X \times_{k} X,~0 \leq i \leq s-1$ with respect to $\F_{q^s}$.~Thus one can assume $c$ is defined over $\F_q$.~Henceforth we shall assume that $c$ is defined over $\F_q$.

\textit{Reduction (2):}

Clearly $\text{dim}(C) \leq \text{dim}(X)$.~If $\text{dim}(C) < \text{dim}(X)$.~There exists a non-empty open subset $U$ of $X$ such that $c^{-1}(U \times_k U)=\emptyset$.~Hence Theorem \ref{non_uniform_estimate} is trivially true for $c^{-1}(U \times_k U) \to U \times_k U$.~Thus the reduction now follows from Remark \ref{basic_observation_reduction}.~Henceforth we shall assume that $\text{dim}(C)=\text{dim}(X)$.

\textit{Reduction (3):}

Let $X_1,~X_2,\cdots,X_t$ be the finitely many irreducible components of $X$ (over $k$).~Let $\F_{q^s}$ be a finite sub extension of $k$ containing $\F_q$,~over which $X_i$ and all the irreducible components of $c_1^{-1}(X_i) \cap c_2^{-1}(X_j)$ are defined for $1 \leq i,j \leq t$.

We now have two possible sub-cases.

\textit{Subcase 1: $s=1$}

This means all the $X_i$'s and all the components of $c_1^{-1}(X_i) \cap c_2^{-1}(X_j)$ are defined over $\F_q$ for $1 \leq i,j \leq t$.~Let $C_{ii'}$ be the distinct irreducible components of $c_1^{-1}(X_i) \cap c_2^{-1}(X_i)$,~for $1 \leq i \leq t$ with $i'$ running over a finite indexing set.~Then $c$ restricts to morphisms $c_{ii'}:C_{ii'} \to X_i \times_k X_i$,~which are quasi-finite along second projection,~and are defined over $\F_q$.

It then follows from Remark \ref{basic_observation_reduction} that Theorem \ref{non_uniform_estimate} holds for the correspondence $c$ if it holds for the finitely many correspondences $c_{ii'}$.

\textit{Subcase 2: $s >1$}

When $s>1$ we can argue as in Reduction $(1)$ to reduce to the case $s=1$.~Indeed consider the correspondences $c^{(n')}:C \to X \times_k X,~0 \leq n' \leq s-1$.~Since the geometric Frobenius $F_{X/\F_q}$ permutes the irreducible components of $X$,~for any two indices $i$ and $j$,~any irreducible component of $(c_1^{(n')})^{-1}(X_i) \cap (c_2^{(n')})^{-1}(X_j)$ is an irreducible component of  $c_1^{-1}(X_{i'}) \cap c_2^{-1}(X_{j'})$ for some indices $i'$ and $j'$. Moreover Theorem \ref{non_uniform_estimate} for $c:C \to X \times_k X$ with respect to $\F_q$,~is equivalent to the assertion for $c^{(n')}:C \to X \times_{k} X,~0 \leq i \leq s-1$ with respect to $\F_{q^s}$.~Thus one can assume $s=1$,~in which case we are reduced to Subcase $1$.

Henceforth we shall assume that $C$ and $X$ are irreducible.

\textit{Reduction (4):}

If either $c_1$ or $c_2$ is not dominant,~then there exists a non-empty open subset $U$ of $X$ such that $c^{-1}(U \times_k U)=\emptyset$.~Hence Theorem \ref{non_uniform_estimate} is trivially true for $c^{-1}(U \times_k U) \to U \times_k U$.~Thus the reduction now follows from Remark \ref{basic_observation_reduction}.

Henceforth we shall assume that $c_1$ and $c_2$ are dominant.

\textit{Reduction (5):}

Since $C$ and $X$ are quasi-projective,~we can factor $c$ as

\begin{center}

$\xymatrix{C \ar[r]^-{c}   \ar@{^{(}->}[d]^-{j_C} & X \times X                \\  
                    \bar{C} \ar[ur]_-{\bar{c}} & }$.

\end{center}

Here $\bar{C}$ is a quasi-projective variety, $j_C$ a dense open immersion and $\bar{c}$ a proper (even projective) morphism.~Let $\bar{c}_1$ and $\bar{c}_2$ be the induced projections onto $X$ from $\bar{C}$.~By assumption $c_2$ is dominant map between varieties of the same dimension,~and hence so is $\bar{c}_2$.~Thus there exists a non-empty open subset $U$ of $X$ such that,~$\bar{c}_2^{-1}(U) \to U$ is a finite morphism.~Finally using Remark \ref{basic_observation_reduction} we first shrink $X$ to $U$,~and then extend $C$ to $\bar{c}^{-1}(U \times_k U)$.

Henceforth we shall assume that $c$ is proper. 

\textit{Reduction (6):}

This is an immediate consequence of Remark \ref{basic_observation_reduction}.

\end{proof}

\begin{rmk}\label{(3)_implies(1)}
We note that under the reductions guaranteed by Lemma \ref{basic_reduction},~the bound in Theorem \ref{non_uniform_estimate} $(1)$ is a consequence of the bound in $(3)$.~Thus it suffices to prove the bound in $(3)$. 
\end{rmk}

As a next step towards proving Theorem \ref{non_uniform_estimate} (in addition to the above reductions) we reduce to the case when $c_2$ is generically \'etale.~For any scheme $X$ over $\F_p$,~we denote the absolute Frobenius morphism by $F_X$.~Let $k$ be an arbitrary field of characteristic $p>0$.~For any $p$-primary number $q=p^r$,~and any scheme $X/k$,~by $X^{(q)}$ we mean the base change $k \times_k X$,~where $k$ is considered as a $k$-scheme via  the $r^{\mathrm{th}}$ iterate of the absolute Frobenius of $k$.~Note that $X^{(q)}$ naturally gets a structure of a $k$-scheme via the first projection.~We call $X^{(q)}$ the $q^{\mathrm{th}}$ Frobenius twist of $X$ with respect to $k$.~Let $F^{(q)}_{X/k}$ be the induced relative Frobenius morphism from $X \to X^{(q)}$ (with respect to $k$).~Note that $F^{(q)}_{X/k}$ is a morphism over $k$.

We will need the following simple Lemmas which we state without proof.

\begin{lem}\label{frobenius_commutes_reduced}

Let $X$ be a scheme of finite type over a field $k$.~Then the natural inclusion of subschemes,~$((X_{\text{red}})^{(q)})_{\text{red}}  \subseteq  (X^{(q)})_{\text{red}}$ induced by the inclusion $X_{\text{red}} \subseteq X$ is an isomorphism.

\end{lem}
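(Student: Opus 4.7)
The plan is to reduce the statement to an elementary fact about nilpotent ideals. Since the question is local on $X$, I would immediately restrict to the affine case $X = \Spec A$ with $A$ a finitely generated $k$-algebra. Denote by $\mathfrak{n} \subseteq A$ the nilradical and set $B := A \otimes_{k, F^r} k$, so that $X^{(q)} = \Spec B$ and $(X_{\text{red}})^{(q)} = \Spec(B/\mathfrak{n}B)$. The natural inclusion of the statement is induced by the surjection $B \surj B/\mathfrak{n}B$, and showing that it becomes an isomorphism after taking underlying reduced subschemes amounts to proving $\sqrt{\mathfrak{n}B} = \sqrt{0}$ inside $B$.

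The only thing to check is therefore that the extended ideal $\mathfrak{n}B$ is nilpotent. Since $A$ is Noetherian (being of finite type over a field), one has $\mathfrak{n}^N = 0$ for some $N \geq 1$, and consequently $(\mathfrak{n}B)^N \subseteq \mathfrak{n}^N \cdot B = 0$. Sheafifying, the closed immersion $(X_{\text{red}})^{(q)} \inj X^{(q)}$ is thus a nilpotent thickening; in particular it is a homeomorphism of underlying topological spaces, which gives the desired equality of reduced subschemes.

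I do not foresee any genuine obstacle. The only delicate point worth flagging is that when $k$ is imperfect the Frobenius twist $(X_{\text{red}})^{(q)}$ of a reduced scheme may itself fail to be reduced, which is precisely why the outer $(-)_{\text{red}}$ on the left-hand side cannot be dropped; the argument above sidesteps this issue because it relies only on the trivial implication that nilpotent elements of $A$ remain nilpotent in $B$, which holds without any hypothesis on $k$.
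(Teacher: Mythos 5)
Your proof is correct. The paper states Lemma \ref{frobenius_commutes_reduced} without proof (it is one of the ``simple Lemmas which we state without a proof''), so there is no argument in the text to compare against, but your reduction to the affine case, the identification of $(X_{\text{red}})^{(q)}$ with $\Spec(B/\mathfrak{n}B)$, and the observation that $\mathfrak{n}B$ is nilpotent are exactly the expected steps. As a minor remark, one does not even need Noetherianity of $A$: each generator $n \otimes 1$ of the extended ideal $\mathfrak{n}B$ is already nilpotent because the structure map $A \to B$ is a ring homomorphism, so $\mathfrak{n}B$ lands in the nilradical of $B$ regardless; but the version you give is perfectly fine, and your closing caveat about the Frobenius twist of a reduced scheme over an imperfect field failing to be reduced is the correct reason the outer $(-)_{\text{red}}$ cannot be omitted.
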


%
%

\begin{lem}\label{relative_frobenius_etale}

Let $X=\Spec(A)$ be \'etale over a field $k$ of characteristic $p>0$.~Then the relative Frobenius $F^{(q)}_{X/k} \colon X \to X^{(q)}$ is an isomorphism for all $p$-primary numbers $q$.
\end{lem}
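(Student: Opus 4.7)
The plan is to reduce to the case of a single finite separable field extension $K/k$ and then establish surjectivity of the relative Frobenius via a primitive-element argument; equality of $k$-dimensions on source and target will then force an isomorphism.

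Since being an isomorphism is Zariski-local, and a finite-type étale $k$-algebra is a finite product of finite separable field extensions of $k$, I may assume $X = \Spec K$ with $K = k(\alpha)$ a finite separable extension of degree $n$. Because étale morphisms are preserved under base change, $X^{(q)} = \Spec K^{(q)}$ is again étale over $k$ with $\dim_k K^{(q)} = n$; concretely, $K^{(q)} = K \otimes_{k, F_k^r} k$. The relative Frobenius then corresponds on rings to the $k$-algebra map $K^{(q)} \to K$ sending $a \otimes \lambda \mapsto \lambda a^q$, determined by the requirement that its composition with the base-change map $a \mapsto a \otimes 1$ is the $q$-power absolute Frobenius on $K$.

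Since source and target have the same finite $k$-dimension $n$, it will suffice to prove the map is surjective. Its image is precisely the $k$-subalgebra $k[K^q] = k[\alpha^q] \subseteq K$, so everything reduces to the field-theoretic claim $k(\alpha^q) = K$. Setting $L = k(\alpha^q) \subseteq K$, the element $\alpha$ is a root of $x^q - \alpha^q = (x-\alpha)^q \in L[x]$, the equality being particular to characteristic $p$. Since $\alpha$ is separable over $k \subseteq L$, its minimal polynomial over $L$ must be separable and a divisor of $(x-\alpha)^q$; the only such divisor is $x-\alpha$ itself, forcing $\alpha \in L$. Hence $L = K$, which completes the argument. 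The only essential use of separability appears at this last step — for an inseparable $\alpha$ one could easily have $k[\alpha^q] \subsetneq k[\alpha]$, so there is no real obstacle here beyond identifying the right reduction.
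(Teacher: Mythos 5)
Your proof is correct. Since the paper states this lemma explicitly without proof (it is one of the ``simple Lemmas which we state without a proof''), there is no argument to compare against; your reduction to a primitive element $K=k(\alpha)$, the computation that the image of the relative Frobenius is $k[\alpha^q]$, the equal-dimension count, and the final separability argument showing $\alpha\in k(\alpha^q)$ together constitute a complete and standard verification.
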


%
%
%

The following lemma is a local form of our desired result.

\begin{lem}\label{field_frobenius_etale}

Let $L/K$ be a finite extension of fields of characteristic $p>0$.~Suppose that the inseparable degree of $L/K$ is $p^n$.~Then $(\Spec(L)^{(p^n)})_{\text{red}}$ is connected and \'etale over $\Spec(K)$ of rank $[L:K]/p^n$. Here $\Spec(L)^{(p^n)}$ is the Frobenius twist of $\Spec(L)$,~considered as $K$-scheme.

\end{lem}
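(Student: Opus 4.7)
The plan is to identify $(\Spec(L)^{(p^n)})_{\text{red}}$ explicitly as $\Spec(L^{\text{sep}})$, where $L^{\text{sep}}$ denotes the separable closure of $K$ inside $L$. Unwinding the definition of the Frobenius twist gives $\Spec(L)^{(p^n)} = \Spec(R)$, where $R = K \otimes_{K, F_K^n} L$ is a finite $K$-algebra of $K$-dimension $[L:K]$, the $K$-module structure on the left factor being twisted by the $n$-th iterate of the absolute Frobenius of $K$. Everything will reduce to showing $R$ is local and identifying its residue field.

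First I would show $R$ is local, which already forces $R_{\text{red}}$ to be a field and $\Spec(R_{\text{red}})$ to be connected. This follows from the fact that the second projection $\Spec(L)^{(p^n)} \to \Spec(L)$ is a base change of the absolute Frobenius $F_K^n \colon \Spec(K) \to \Spec(K)$. The latter is a universal homeomorphism (it is surjective and radicial on a one-point space, and integral since every $a \in K$ satisfies $x^{p^n} - a^{p^n} \in K^{p^n}[x]$), so the projection is a universal homeomorphism as well. Hence $|\Spec(R)| = |\Spec(L)|$ is a single point, and since $R$ is Artinian it is local.

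Next I would construct a surjection $\psi \colon R \twoheadrightarrow L^{\text{sep}}$ of $K$-algebras by $a \otimes b \mapsto a b^{p^n}$. Well-definedness on the tensor product is routine: both sides of the defining relation $a c^{p^n} \otimes b = a \otimes cb$ map to $a c^{p^n} b^{p^n}$, and multiplicativity plus $K$-linearity are then immediate. The image lies in $L^{\text{sep}}$ because $L^{p^n} \subseteq L^{\text{sep}}$: for $b \in L$ the minimal polynomial of $b$ over $L^{\text{sep}}$ has the form $x^{p^k} - c$ with $p^k \mid [L:L^{\text{sep}}] = p^n$, so $b^{p^n} = c^{p^{n-k}} \in L^{\text{sep}}$. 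Surjectivity amounts to the identity $K \cdot L^{p^n} = L^{\text{sep}}$, which I would verify by picking a primitive element $\theta$ for the separable extension $L^{\text{sep}}/K$: the element $\theta$ is both separable over $K$ and purely inseparable over $K(\theta^{p^n})$ (being a root of $(x-\theta)^{p^n} = x^{p^n} - \theta^{p^n}$), which forces $\theta \in K(\theta^{p^n})$ and hence $L^{\text{sep}} = K(\theta) = K(\theta^{p^n}) \subseteq K \cdot L^{p^n}$.

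Combining the two steps finishes the proof: $\psi$ is a surjection from a local Artinian ring onto the field $L^{\text{sep}}$, so its kernel is the unique maximal ideal $\mathfrak{m}$ of $R$, and since $\mathfrak{m}$ coincides with the nilradical of $R$ we get $R_{\text{red}} = R/\mathfrak{m} \cong L^{\text{sep}}$. The latter is \'etale and connected over $K$ of rank $[L^{\text{sep}}:K] = [L:K]/p^n$, exactly as claimed. The only mildly delicate step is the verification that $K \cdot L^{p^n} = L^{\text{sep}}$ via the primitive element argument; everything else is a direct manipulation of the twisted tensor product combined with the universal-homeomorphism property of Frobenius.
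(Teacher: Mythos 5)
Your proof is correct, but it proceeds by a genuinely different route from the paper's. The paper first decomposes the extension through the tower $K \subset K' \subset L$, with $K' = L^{\mathrm{sep}}$; it invokes the separate Lemma \ref{relative_frobenius_etale} (relative Frobenius is an isomorphism on \'etale $K$-schemes) to replace $\Spec(K')^{(p^n)}$ by $\Spec(K')$ in the lower square of a Cartesian ladder, reducing the claim by transitivity of base change to the case where $L/K$ is purely inseparable of degree $p^n$; that residual case is then settled by writing down an explicit presentation $L \simeq K[X_1,\dots,X_r]/(X_i^{p^{t_i}}-\beta_i)$ with $t_i \le n$, observing that after twisting the relations become $X_i^{p^{t_i}}-\beta_i^{p^n}$, which reduce to $X_i = \beta_i^{p^{n-t_i}}$ after killing nilpotents. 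You instead avoid the tower reduction entirely: you establish that $R := K \otimes_{F_K^n,K} L$ is local Artinian because the second projection is a base change of Frobenius (a universal homeomorphism), and then exhibit the residue field directly by constructing the $K$-algebra surjection $a \otimes b \mapsto a b^{p^n}$ onto $L^{\mathrm{sep}}$ and showing it is surjective via $K \cdot L^{p^n} = L^{\mathrm{sep}}$ (the primitive-element argument). Your route is more self-contained --- it bypasses Lemma \ref{relative_frobenius_etale} and yields an explicit identification $R_{\mathrm{red}} \cong L^{\mathrm{sep}}$ in one step --- whereas the paper's route is structured to reuse a helper lemma that is also needed for Corollary \ref{frobenius_etale_sufficient}, and it avoids the field-theoretic verification $K \cdot L^{p^n} = L^{\mathrm{sep}}$ in favor of a direct presentation computation. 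Both approaches correctly locate the reduced twist as $\Spec(L^{\mathrm{sep}})$.
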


\begin{proof}

Let $K'$ be the separable closure of $K$ in $L$.~Thus we have a commutative diagram with Cartesian squares

\begin{equation}\label{field_Frobenius_etale_1}
\xymatrix{ \Spec(L)^{(p^n)} \ar[r] \ar[d] & \Spec(L) \ar[d] \\
                 \Spec(K')^{(p^n)} \ar[r] \ar[d] & \Spec(K') \ar[d] \\
                 \Spec(K) \ar[r]^-{F^{n}_{\Spec(K)}} & \Spec(K) }.
\end{equation}

Here $\Spec(K')^{(p^n)}$ is the $p^{n}$ Frobenius twist of $\Spec(K')$ with respect to $K$.~Lemma \ref{relative_frobenius_etale} implies that $\Spec(K')^{(p^n)}$ is isomorphic to $\Spec(K')$ under the relative Frobenius (with respect to $\Spec(K)$).~Thus the lower square in Diagram \ref{field_Frobenius_etale_1} is isomorphic to 

\begin{equation}\label{field_Frobenius_etale_2}
\xymatrix{\Spec(K') \ar[r]^-{F^{n}_{\Spec(K')}} \ar[d] & \Spec(K') \ar[d] \\
                 \Spec(K) \ar[r]^-{F^{n}_{\Spec(K)}} & \Spec(K) }.
\end{equation}

Since the inseparable degrees of $L/K$ and $L/K'$ are the same,~transitivity of base change applied to the Diagram \ref{field_Frobenius_etale_1} implies that it suffices to prove the lemma for $L/K'$.~Thus we are reduced to proving that if $L/K$ is a purely inseparable extension of degree $p^n$,~then the natural morphism $(\Spec(L)^{(p^n)})_{\text{red}} \to \Spec(K)$ is an isomorphism.~This is immediate.

%
%
%
%
%
%
%
%
%

\end{proof}

\begin{cor}\label{frobenius_etale_sufficient}

Let $L/K$ be a finite extension of fields of characteristic $p>0$.~Suppose that the inseparable degree of $L/K$ is $p^n$.~Then $(\Spec(L)^{(p^m)})_{\text{red}}$ is connected and \'etale over $\Spec(K)$ of rank $[L:K]/p^n$ for all $m \geq n$.

\end{cor}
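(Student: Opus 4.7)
The plan is to bootstrap from Lemma~\ref{field_frobenius_etale} (which handles $m=n$) by factoring the $p^m$-th Frobenius twist through the $p^n$-th one, and then observing that additional Frobenius twists do not change an \'etale $K$-scheme. Concretely, I would first note the identity $\Spec(L)^{(p^m)} \simeq (\Spec(L)^{(p^n)})^{(p^{m-n})}$ of $K$-schemes, which follows by unwinding the definition of Frobenius twist as a base change: both sides equal $\Spec(K) \times_{F^{m},K} \Spec(L)$ by transitivity of fiber products, since the $m$-th absolute Frobenius of $K$ factors as the $n$-th followed by the $(m-n)$-th.

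Next, set $Y := (\Spec(L)^{(p^n)})_{\text{red}}$. By Lemma~\ref{field_frobenius_etale}, $Y$ is connected and \'etale over $\Spec(K)$ of rank $[L:K]/p^n$. Applying Lemma~\ref{frobenius_commutes_reduced} to $\Spec(L)^{(p^n)}$ viewed as a $K$-scheme, together with the identity above, gives
\[
(\Spec(L)^{(p^m)})_{\text{red}} \;\simeq\; \bigl((\Spec(L)^{(p^n)})^{(p^{m-n})}\bigr)_{\text{red}} \;\simeq\; \bigl(Y^{(p^{m-n})}\bigr)_{\text{red}}.
\]
Now $Y$ is \'etale over $K$, so its base change $Y^{(p^{m-n})}$ is \'etale over $K$ as well (hence already reduced) and, by Lemma~\ref{relative_frobenius_etale}, the relative Frobenius $Y \to Y^{(p^{m-n})}$ is an isomorphism of $K$-schemes. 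Therefore $(\Spec(L)^{(p^m)})_{\text{red}} \simeq Y$, which is connected and \'etale over $\Spec(K)$ of rank $[L:K]/p^n$, as required.

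There is no real obstacle here; the only point to check carefully is the compatibility of Frobenius twists with composition, i.e.\ that twisting by $p^n$ and then by $p^{m-n}$ (both taken relative to $K$) reproduces the $p^m$-twist. Once that bookkeeping is done, Lemmas~\ref{frobenius_commutes_reduced}, \ref{relative_frobenius_etale}, and \ref{field_frobenius_etale} combine immediately to yield the conclusion.
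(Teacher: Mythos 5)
Your proof is correct and follows exactly the paper's approach: factor the $p^m$-twist through the $p^n$-twist, use Lemma~\ref{frobenius_commutes_reduced} to move the reduction inside, apply Lemma~\ref{field_frobenius_etale} to identify $(\Spec(L)^{(p^n)})_{\text{red}}$, and invoke Lemma~\ref{relative_frobenius_etale} to conclude that further twisting the resulting \'etale $K$-scheme changes nothing. Your writeup is somewhat more explicit about the transitivity of Frobenius twists, but the argument is the same.
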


\begin{proof}

Lemma \ref{frobenius_commutes_reduced}  implies that 

\begin{equation}\label{frobenius_etale_sufficient_1}
(\Spec(L)^{(p^m)})_{\text{red}} \simeq (((\Spec(L)^{(p^n)})_{\text{red}})^{(p^{m-n})})_{\text{red}}.
\end{equation}

Here all the Frobenius twists are with respect to $K$.~Lemma  \ref{field_frobenius_etale} implies that $(\Spec(L)^{(p^n)})_{\text{red}}$ is connected and \'etale over $\Spec(K)$ of rank $[L:K]/p^n$.~The corollary is then a consequence of Lemma \ref{relative_frobenius_etale}.

\end{proof}

We will need the following global variant of Corollary \ref{frobenius_etale_sufficient}.

\begin{lem}\label{frobenius_etale_sufficient_schemes}

Let $k$ be a field of characteristic $p>0$.~Let $f \colon Y \to X$ be a dominant morphism between varieties (over $k$) of  dimension $d$.~Let $\delta \colonequals [k(Y) \colon k(X)]$ induced by $f$ at the level of generic points.~Let $p^n$ be the inseparable degree of $k(Y)/k(X)$.~Let $F_X:X \to X$ be the absolute Frobenius of $X$ with respect to $\F_p$.~For every $m \geq 1$,~consider the Cartesian diagram

\begin{center}

$\xymatrix{Y \times_{X,F^m_X} X \ar[d]^-{f^{(m)}} \ar[r]^-{F^{m'}_X} & Y \ar[d]^-{f} \\
             X                        \ar[r]^-{F_X^m}          &  X }$.

\end{center}

Then 

\begin{enumerate}

\item the map $F^{m'}_X$ is an universal homeomorphism for any $m \geq 1$.

\item $\text{dim}(Y \times_{X,F^m_X} X)=d$.

\item $f^{(m)}$ is dominant for any $m \geq 1$.

\item For any $m \geq n$,~$(f^{(m)})_{\text{red}}$ is generically \'etale of degree $\delta/p^n$.

\end{enumerate}

\end{lem}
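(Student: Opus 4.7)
The proof breaks up into four short steps, each corresponding to one of the assertions (1)--(4). The main content is in (4), where one reduces to the generic point and invokes Corollary~\ref{frobenius_etale_sufficient}; the first three items are then formal consequences of the fact that the absolute Frobenius is a universal homeomorphism.

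\medskip

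\textbf{Step 1 (proof of (1)).} The absolute Frobenius $F_X \colon X \to X$ is a universal homeomorphism (it is the identity on the underlying topological space and induces a purely inseparable map on residue fields and on each affine chart). Universal homeomorphisms are stable under arbitrary base change. Since $F_X^{m'}$ is obtained by base changing $F_X^m$ along $f$ in the given Cartesian square, it is itself a universal homeomorphism.

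\medskip

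\textbf{Step 2 (proof of (2)).} A universal homeomorphism preserves the dimension of the underlying topological space. Hence $\dim(Y \times_{X,F_X^m} X) = \dim(Y) = d$ by Step~1.

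\medskip

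\textbf{Step 3 (proof of (3)).} By commutativity of the Cartesian square, $F_X^m \circ f^{(m)} = f \circ F_X^{m'}$. Since $F_X^{m'}$ is surjective (Step~1), the right-hand side has image $f(Y)$, so $F_X^m(\mathrm{im}(f^{(m)})) = f(Y)$. As $F_X^m$ is a homeomorphism on underlying spaces, $\mathrm{im}(f^{(m)}) = (F_X^m)^{-1}(f(Y))$, whose closure equals $(F_X^m)^{-1}(\overline{f(Y)}) = (F_X^m)^{-1}(X) = X$. Thus $f^{(m)}$ is dominant.

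\medskip

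\textbf{Step 4 (proof of (4)).} Localize the Cartesian square at the generic point $\eta_X = \Spec k(X)$ of $X$. The restriction of $F_X^m$ to $\eta_X$ is induced (via the structure map $\eta_X \hookrightarrow X$) by the $m$-th iterate of the absolute Frobenius $F_{k(X)}^m$ on $k(X)$. Consequently the generic fibre of $f^{(m)}$ is
\[
(Y \times_{X,F_X^m} X) \times_X \Spec k(X) \;\simeq\; \Spec\bigl(k(Y) \otimes_{k(X),\,F_{k(X)}^m} k(X)\bigr),
\]
which, by the definition given in the introduction, is precisely the Frobenius twist $(\Spec k(Y))^{(p^m)}$ regarded as a $k(X)$-scheme. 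Since $m \geq n$ and $p^n$ is the inseparable degree of $k(Y)/k(X)$, Corollary~\ref{frobenius_etale_sufficient} applies and yields that $((\Spec k(Y))^{(p^m)})_{\text{red}}$ is connected and \'etale over $\Spec k(X)$ of rank $\delta/p^n$. Because passing to the generic fibre commutes with reduction (localisation is exact and preserves nilpotents), the generic fibre of $(f^{(m)})_{\text{red}}$ is \'etale of rank $\delta/p^n$, so by spreading out $(f^{(m)})_{\text{red}}$ is generically \'etale of the asserted degree.

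\medskip

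The only nontrivial point is the identification in Step~4 of the generic fibre with a Frobenius twist at the level of fields, which is entirely a matter of unwinding the definition of $F_X^m$ restricted to the generic point; once this is done, Corollary~\ref{frobenius_etale_sufficient} finishes the argument. Steps~1--3 are formal.
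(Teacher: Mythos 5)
Your proof is correct and follows the same route as the paper: items (1)--(3) are formal consequences of $F_X$ being a universal homeomorphism, and (4) is proved by restricting the Cartesian square to the generic point of $X$, identifying the generic fibre of $f^{(m)}$ with the Frobenius twist $\Spec(k(Y))^{(p^m)}$ over $\Spec(k(X))$, and then invoking Corollary~\ref{frobenius_etale_sufficient}. The only presentational difference is that the paper records the intermediate isomorphisms (that the generic fibre of a dominant morphism of equidimensional varieties is $\Spec$ of the function field, and that reduction commutes with passage to the generic fibre) as explicit displayed equations, while you state them inline; the mathematical content and the role of the key lemma are identical.
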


\begin{proof}

The morphism $F_X:X \to X$ being a universal homeomorphism immediately implies part $(1)$ and $(2)$ of the above proposition.~Since $f$ is assumed to be dominant,~$f^{(m)}$ is necessarily dominant and thus $(3)$ holds.~Hence it remains to show $(4)$.

Note that $(2)$ and $(3)$ together then imply that the natural map 

\begin{equation}\label{frobenius_etale_sufficient_schemes_2}
\Spec(k((Y \times_{X,F^m_X} X)_{\text{red}})) \to ((Y \times_{X,F^m_X} X) \times_X \Spec(k(X)))_{\text{red}}
\end{equation}

\noindent is an isomorphism.~Similarly since $X$ and $Y$ are varieties of the same dimension,~ the dominance of $f$ implies that the natural map

\begin{equation}\label{frobenius_etale_sufficient_schemes_3}
\Spec(k(Y)) \to Y \times_X \Spec(k(X)),
\end{equation}

\noindent is an isomorphism.

Let  $j_{\eta_X}:\Spec(k(X)) \to X$ be the inclusion of the generic point of $X$ into $X$.~Note that the morphism induced by the absolute Frobenius $F_X$ at the level of the generic point corresponds to the Frobenius of $k(X)$.~Thus one has a commutative diagram

\begin{equation}\label{frobenius_etale_sufficient_schemes_1}
\xymatrix{X \ar[r]^-{F^m_X} & X  \\
                  \Spec(k(X)) \ar[u]^{j_{\eta_X}} \ar[r]^{F^m_{k(X)}}  &  \Spec(k(X)) \ar[u]^{j_{\eta_X}} }.
\end{equation}

Thus the isomorphism (\ref{frobenius_etale_sufficient_schemes_3}) and transitivity of base change together imply that

\begin{equation}\label{frobenius_etale_sufficient_schemes_4}
((Y \times_{X,F^m_X} X) \times_X \Spec(k(X)))_{\text{red}} \simeq \Spec(k(Y))^{(p^m)},
\end{equation}

\noindent where the Frobenius twist on the right is with respect to the $k(X)$-scheme structure via $f$.~Combining the isomorphisms (\ref{frobenius_etale_sufficient_schemes_2}) and (\ref{frobenius_etale_sufficient_schemes_4}) we get,

\begin{equation}\label{frobenius_etale_sufficient_schemes_5}
\Spec(k((Y \times_{X,F^m_X} X)_{\text{red}})) \simeq \Spec(k(Y))^{(p^m)},
\end{equation}

\noindent as $\Spec(k(X))$ schemes.~Thus $(4)$ now follows from Corollary \ref{frobenius_etale_sufficient}.

\end{proof}

\begin{prop}\label{reduction_generically_etale}

In addition to the reductions above,~It suffices to prove Theorem \ref{non_uniform_estimate} under the assumption that $c_2$ is generically \'etale.

\end{prop}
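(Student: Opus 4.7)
The strategy is to eliminate the inseparability of $c_2$ by base-changing $c_2$ along a suitable iterate of the absolute Frobenius of $X$, using Lemma~\ref{frobenius_etale_sufficient_schemes}, and then to check that this reduces Theorem~\ref{non_uniform_estimate} for $c$ to the same theorem for a new correspondence $\tilde c$ whose second projection is generically \'etale.

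Let $p^a=\delta'$ be the inseparable degree of $c_2$, write $q=p^r$, pick any integer $m$ with $m\geq a$ and $r\mid m$, and set $s\colonequals m/r$. Form the Cartesian diagram
\begin{equation*}
\xymatrix{
\tilde C \ar[r]^-{\phi} \ar[d]_-{\tilde c_2} & C \ar[d]^-{c_2} \\
X \ar[r]^-{F_X^m} & X.
}
\end{equation*}
By Lemma~\ref{frobenius_etale_sufficient_schemes} applied to $f=c_2$, the map $\phi$ is a universal homeomorphism, $\tilde C_{\text{red}}$ is a variety of dimension $\text{dim}(X)$, and $(\tilde c_2)_{\text{red}}\colon\tilde C_{\text{red}}\to X$ is generically \'etale of degree $\deg(c_2)/p^a$. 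Since $m$ is a multiple of $r$, the absolute Frobenius $F_X^m$ agrees on $k$-points with $F^{s}_{X/\F_q}$, and the whole construction descends to $\F_q$. Define the correspondence
\begin{equation*}
\tilde c\colon \tilde C_{\text{red}}\to X\times_k X,\qquad \tilde c\colonequals (c_1\circ\phi,\ (\tilde c_2)_{\text{red}}).
\end{equation*}
One verifies that $\tilde c$ satisfies every hypothesis of Theorem~\ref{non_uniform_estimate}: $\tilde c_1$ and $\tilde c_2$ are dominant, $\tilde c_2$ is quasi-finite (and now generically \'etale), and $\tilde c$ is a closed embedding defined over $\F_q$. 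A short degree computation using that $\phi$ has generic degree $p^{m\,\text{dim}(X)-a}$ yields
\begin{equation*}
\tilde\delta=\delta\cdot q^{s\,\text{dim}(X)}/\delta',\qquad \tilde\delta'=1,
\end{equation*}
so the leading term $(\tilde\delta/\tilde\delta')\,q^{n\,\text{dim}(X)}$ predicted for $\tilde c^{(n)}$ equals the leading term $(\delta/\delta')\,q^{(n+s)\,\text{dim}(X)}$ predicted for $c^{(n+s)}$.

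The core comparison is a bijection $\phi\colon\text{Fix}(\tilde c^{(n)})(k)\xrightarrow{\sim}\text{Fix}(c^{(n+s)})(k)$. On $k$-points the defining equation $F^{n}_{X/\F_q}(c_1(\phi(y)))=\tilde c_2(y)$ for a fixed point of $\tilde c^{(n)}$, combined with the square's identity $F_X^{m}(\tilde c_2(y))=c_2(\phi(y))$, gives $F^{n+s}_{X/\F_q}(c_1(\phi(y)))=c_2(\phi(y))$, i.e.\ $\phi(y)\in\text{Fix}(c^{(n+s)})(k)$; conversely each such point has a unique preimage under $\phi$ because $\phi$ is a universal homeomorphism. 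Combined with the degree calculation above, the estimate (\ref{main_theorem_estimate}) for $\tilde c$ at index $n$ transfers to the corresponding estimate for $c$ at index $n+s$, up to adjusting $N$ and $M$ by the fixed shift $s$ and a bounded factor. This is exactly the reduction claimed by the proposition.

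\textbf{Main obstacle.} The subtle point is the $\F_q$-descent of the construction. The absolute Frobenius $F_X^m$ is a priori only an $\F_p$-morphism, so one has to choose $m$ divisible by $r$ and identify $F_X^m$ with the $s$-th iterate of the geometric Frobenius $F_{X/\F_q}$ in such a way that $\tilde C_{\text{red}}$ and $\tilde c$ are genuinely defined over $\F_q$. Once this descent is in hand, the bijection of $k$-points and the matching of leading constants follow mechanically from Lemma~\ref{frobenius_etale_sufficient_schemes} and the universal-homeomorphism property of the Frobenius.
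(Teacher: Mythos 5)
Your plan — base-change $c_2$ along a Frobenius power to kill inseparability, invoke Lemma~\ref{frobenius_etale_sufficient_schemes}, then transfer the fixed-point count and the leading constant — is indeed the paper's approach. But there is a genuine error in the choice of Frobenius. You form the Cartesian square with the \emph{absolute} Frobenius $F_X^m$ of $X$ as an $\F_p$-scheme, whereas the paper uses the \emph{geometric} Frobenius $F^s_{X/\F_q}$ (and passes to the model $X_0/\F_q$, where this becomes $F_{X_0}^{[\F_q:\F_p]s}$, only in order to apply Lemma~\ref{frobenius_etale_sufficient_schemes}). These are not the same morphism, and the claim in your ``main obstacle'' paragraph that ``$F_X^m$ agrees on $k$-points with $F^s_{X/\F_q}$'' is false: $F_X^m$ is the identity on the underlying topological space of $X$ (it is identity on primes), whereas $F^s_{X/\F_q}$ acts on $X(k)$ by the $q^s$-power map in coordinates. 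They differ by the arithmetic Frobenius $1_{X_0}\times F^{m}_{\Spec k}$, which is a nontrivial automorphism whenever $k\neq\F_q$.

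This breaks your argument in two concrete places. First, since $F_X^m$ is the identity on $|X|$, the Cartesian square gives $|\tilde C|=|C|$ with $\phi$ the identity on points and $|\tilde c_2|=|c_2|$; hence $\text{Fix}(\tilde c^{(n)})$ has the same underlying set as $\text{Fix}(c^{(n)})$ — not $\text{Fix}(c^{(n+s)})$ — and the shift by $s$ that you use to rebalance the leading term simply does not occur. Second, $\phi$ covers $F^m_{\Spec k}$ rather than the identity, so it is not a $k$-morphism; consequently $\tilde c_1=c_1\circ\phi$ is not $k$-linear, and $\tilde c=(\tilde c_1,(\tilde c_2)_{\text{red}})$ does not realize $\tilde C_{\text{red}}$ as a closed $k$-subvariety of $X\times_k X$, which the hypotheses of Theorem~\ref{non_uniform_estimate} require. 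The repair, and what the paper actually does, is to put $F^s_{X/\F_q}$ (a $k$-morphism) in the square over $k$, so that $C'$ is a closed $k$-subvariety of $X\times_k X$ with both projections $k$-morphisms and the fixed-point sets shift as desired; Lemma~\ref{frobenius_etale_sufficient_schemes}, which is phrased for the absolute Frobenius, is then applied only to the $\F_q$-model, where $F^s_{X/\F_q}$ is the base change of $F_{X_0}^{[\F_q:\F_p]s}$, a power of the absolute Frobenius of $X_0$ that \emph{is} an $\F_q$-morphism because $[\F_q:\F_p]s$ is a multiple of $[\F_q:\F_p]$, and the generic-\'etaleness conclusion then base-changes back to $k$.
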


\begin{proof}

If $c_2$ is generically \'etale we are done.~Hence assume that $c_2$ is not generically \'etale,~and let $\delta'=p^n$ be the inseparable degree of $k(X)/k(C)$ induced by $c_2$.~This in particular implies $d=\text{dim}(X) >0$.

Let $F_{X/\F_q}$ be the geometric Frobenius of $X$ with respect to $\F_q$.~Let $r$ be the smallest integer such that $q^r \geq p^n$.~Consider the Cartesian diagram

\begin{equation}\label{reduction_generically_etale_1}
\xymatrix{C \times_{X \times_k X} (X \times_k X) \ar@{^{(}->}[r] \ar[d] & X \times_k X \ar[d]^-{1 \times F^r_{X/\F_q}} \ar[r]^-{\text{pr}_2} & X \ar[d]^-{F^r_{X/\F_q}} \\
                  C    \ar@{^{(}->}[r] &  X \times_k   X \ar[r]^-{\text{pr}_2} & X    }.
\end{equation}

Let $C' \colonequals (C \times_{X \times_k X} (X \times_k X) )_{\text{red}}$ and $c':C' \subseteq X \times_k X$ be the induced correspondence.~Let $F'$ be the induced map from $C'$ to $C$.~Since $F_{X/\F_q}$ is a universal homeomorphism,~$C'$ is necessarily irreducible.

By definition $c_1'=c_1 \circ F$.~Hence 

\begin{equation}\label{reduction_generically_etale_5'}
\text{deg}(c_1')=\delta \text{deg}(F').
\end{equation}

We claim that

\begin{enumerate}
\item $c_2'$ is generically \'etale of degree $\frac{\text{deg}(c_2)}{\delta'}$.

\item To prove Theorem \ref{non_uniform_estimate} for the correspondence $c:C \subseteq X \times_k X$,~it suffices to prove Theorem \ref{non_uniform_estimate} for the correspondence $c':C' \subseteq X \times_k X$.
\end{enumerate}

$(1)$ and $(2)$ together imply the Proposition.~We now prove these claims.

\subsubsection*{Proof of claim 1}
First, note that $c'$ is also defined over $\F_q$.~Further transitivity of base change applied to the Diagram \ref{reduction_generically_etale_1} implies that the following diagram is Cartesian upto nilpotents

\begin{equation}\label{reduction_generically_etale_2}
\xymatrix{C' \ar[d]^-{c_2'} \ar[r]^-{F'} & C \ar[d]^-{c_2} \\
                X \ar[r]_-{F^{r}_{X/\F_q}} & X}.
\end{equation}

Let 

\begin{equation}\label{reduction_generically_etale_3}
\xymatrix{C_0 \times_{X_0 \times_{\F_q} X_0} (X_0 \times_{\F_q} X_0) \ar@{^{(}->}[r] \ar[d] & X_0 \times_{\F_q} X_0 \ar[d]^-{1 \times F^{[\F_q:\F_p]r}_{X_0}} \ar[r]^-{\text{pr}_2} & X_0 \ar[d]^-{F^{[\F_q:\F_p]r}_{X_0}} \\
                  C_0    \ar@{^{(}->}[r] &  X_0 \times_{\F_q}   X_0 \ar[r]^-{\text{pr}_2} & X_0    }
\end{equation}

\noindent be a model for the Diagram \ref{reduction_generically_etale_1} over $\F_q$.~Here as before $F_{X_0}$ is the absolute Frobenius of $X_0$ with respect to $\F_p$.~By assumption 

\begin{equation}\label{reduction_generically_etale_4}
[\F_q:\F_p]r \geq n.
\end{equation}

Let $C'_0:=(C_0 \times_{X_0 \times_{\F_q} X_0} (X_0 \times_{\F_q} X_0))_{\text{red}}$.~Since $\F_q$ is perfect,~$C'_0$ is a model of $C'$ over $\F_q$.~Thus the morphism $c'_0:C'_0 \subseteq X_0 \times_{\F_q} X_0$ is a model for $c'$.~Let $(c_2)_0$ and $(c'_2)_0$ be the morphism induced from $C_0$ and $C'_0$ respectively,~to $X_0$ under the second projection.~As before we get a diagram which is Cartesian upto nilpotents

\begin{equation}\label{reduction_generically_etale_5}
\xymatrix{C'_0 \ar[d]^-{(c'_2)_0} \ar[r]^-{F'_0} & C \ar[d]^-{ (c_2)_0} \\
                X_0 \ar[r]_-{F^{[\F_q:\F_p]r}_{X_0}} & X},
\end{equation}

\noindent which is a model for the Diagram \ref{reduction_generically_etale_2} over $\F_q$.

The inequality (\ref{reduction_generically_etale_4}) and Lemma \ref{frobenius_etale_sufficient_schemes} together imply that $(c'_2)_0$ is generically \'etale of degree $\frac{\text{deg}(c_2)}{\delta'}$,~and hence so is $c'_2$.

\subsubsection*{Proof of claim (2)}

Since $\text{deg}(c_2')=\frac{\text{deg}(c_2)}{\delta'}$,~the commutative diagram (\ref{reduction_generically_etale_2}) implies that $\text{deg}(F')=\frac{q^{rd}}{p^n}$.~Thus the equality (\ref{reduction_generically_etale_5'}) implies that the generic degree $\delta_1'$ of $c_1'$ is $\delta q^{rd}/p^n$.

Note that $F'$ induces a bijection (also denoted by $F'$) between $|C'|=C'(k)$ and $|C|=C(k)$.~We claim that this bijection restricts to a set theoretic bijection between $\text{Fix}(c'^{(n')})(k) \subset C'(k)$ and $\text{Fix}((c)^{(n'+r)})(k) \subset C(k)$ for all positive integers $n'$.

Indeed let $\alpha \in \text{Fix}(c'^{(n')})(k) \subset C'(k)$.~Then $F_{X/\F_q}^{n'} \circ c_1 \circ F'(\alpha)=c_2'(\alpha)$.~Hence the commutative diagram (\ref{reduction_generically_etale_2}) implies that $F_{X/\F_q}^r \circ F_{X/\F_q}^{n'} \circ c_1 \circ F'(\alpha)= F_{X/\F_q}^r \circ c_2'(\alpha)=c_2 \circ F'(\alpha)$.~Thus $F'(\alpha) \in \text{Fix}(c^{(n'+r)})(k)$. The converse is also true since $F^r_{X/\F_q}$ is a homeomorphism.

Now suppose Theorem \ref{non_uniform_estimate} has been established for $c'$.~This clearly implies $(1)$ and $(2)$ in Theorem \ref{non_uniform_estimate} for $c$.~Thus we only need to establish $(3)$ for $c$ assuming the same for $c'$.

The degree of $c_1'$ is $\delta q^{rd}/p^n$,~and $c_2'$ is generically \'etale.~Thus there exist integers $N'$ and $M'$ such that for any integer $n' \geq N'$,~$\text{Fix}(c'^{(n')})$ is finite (over $k$),~and 

\begin{equation}\label{estimate_for_generically_etale}
|\#\text{Fix}(c'^{(n')})(k)-\frac{\delta q^{rd}}{p^n} q^{n'd}| \leq M'q^{n'(d-\frac{1}{2})}.
\end{equation}

Moreover since $\#\text{Fix}(c'^{(n')})(k)$ and $\#\text{Fix}(c^{(n'+r)})(k)$ are set theoretically bijective,~the inequality (\ref{estimate_for_generically_etale}) can be rewritten as 

\begin{equation}\label{estimate_for_not_generically_etale}
|\#\text{Fix}(c^{(n'+r)})(k)-\frac{\delta}{\delta'} q^{(n'+r)d}| \leq Mq^{(n'+r)(d-\frac{1}{2})},
\end{equation}

where $M=\frac{M'}{q^{r(d-\frac{1}{2})}}$.~Note that $\delta$ is the degree of $c_1$,~and $\delta'=p^n$ the inseparable degree of $c_2$.~Thus Theorem \ref{non_uniform_estimate}.~$(3)$ holds true for $c$ with $N$ and $M$ equal to $N'+r$ and $\frac{M'}{q^{r(d-\frac{1}{2})}}$ respectively.

\end{proof}

Finally, the key reduction is analogous to the one carried out in \cite{Varshavsky_Hrushovski}.~Indeed using Varshavsky's construction (\cite{Varshavsky_Hrushovski},~Corollary 2.4 and Claim 5.2,~Step 4) we reduce to the case of locally invariant boundary.~The key construction being \cite{Varshavsky_Hrushovski},~Proposition 2.3 which shows that after replacing $X$ by an appropriate blow up we may assume that the correspondence is locally invariant (see Definition \ref{locally_invariant_closed}) along the \textit{boundary}.

\begin{prop}\label{reduction_to_locally_invariant_boundary}
It suffices to prove Theorem \ref{non_uniform_estimate} under the assumption that $c \colon C \to X \times_k X $ has a compactification $\bar{c} \colon \bar{C} \to \bar{X} \times_k \bar{X}$ (over $\F_q$)  with $\bar{C}$ and $\bar{X}$ projective and such that the boundary is locally invariant over $\F_q$.
 \end{prop}

\begin{rmk}\label{compactification_etale_also}
We note that since by assumption $c$ is proper,~Corollary \ref{compactification_effect_open_part} implies that $\bar{c}^{-1}(X \times_k X)=C$.~In particular if $c_2$ is \'etale,~so is $\bar{c}_2$ when restricted to $C$
\end{rmk}

\subsection{A bound on the global terms}
Let $k$ be an algebraic closure of $\F_q$.~Let $c \colon C \to X \times_k X$ be a correspondence of projective varieties\footnote{In this section $C$ and $X$ are projective varieties.~We deliberately avoid the `bar' decoration since we only need to deal with the compactified correspondence in this section.}.~Further assume that 
\noindent \begin{enumerate}
\item $c$ is defined over $\F_q$ (and we choose a model).
\item $\text{dim}(C)=\text{dim}(X)=d$.
\item $c_1$ and $c_2$ are dominant.
\item $c_2$ is generically \'etale.
\end{enumerate}

Let $j:U \hookrightarrow X$ be a non-empty smooth open subscheme defined over $\F_q$ such that,~$c_{2}|_{(c_1^{-1}(U) \cap c_2^{-1}(U))}$ is \'etale.~Recall that we are in such a set up (see Remark \ref{compactification_etale_also}).

Let $C' \colonequals c_1^{-1}(U) \cap c_2^{-1}(U)$.~By definition $[c]|_U$ is the correspondence $c' \colon C' \to U \times_k U$ induced by $c$.~Further since $c_2|_{C'}$ is assumed to be \'etale,~$C'$ is smooth over $k$.

There is a natural cohomological correspondence of $\bar{\Q}_{\ell}[d]|_U$ lifting $c'$ given by

\begin{equation}\label{trivial_cohomological_correspondence}
\xymatrix{c_1^{'*}\bar{\Q}_{\ell}[d] \ar[r]^-{\cong}_-{c^{'*}_1} &  \bar{\Q}_{\ell}[d] \ar[r]^-{\cong}_-{(S)} & \mathbb{D}_{C'}  \bar{\Q}_{\ell}[d] \ar[r]^-{\cong}_-{c_2^{'!}} & c_2^{'!} \mathbb{D}_{C'}  \bar{\Q}_{\ell}[d] \ar[r]^-{\cong}_-{(S)} & c_2^{'!}\bar{\Q}_{\ell}[d]}.
\end{equation}

Here the isomorphisms $(S)$ come from the smoothness of $C'$.~The morphism $c_2^{'!}$ is dual to the natural isomorphism $c_2^{'*} \colon c_2^{'*}\bar{\Q}_{\ell}[d] \simeq \bar{\Q}_{\ell}[d]$.

Since the correspondence $c|_U$ is quasi-finite along the second projection,~we can make sense of the naive local term along closed points $y$ in $\text{Fix}(c|_U)$ (see Definition \ref{Naive_local_trace}).

\begin{lem}\label{naive_local_term_calculate}
For the cohomological correspondence $u'$ in (\ref{trivial_cohomological_correspondence}),~and any closed point $y$ in $\text{Fix}(c|_U)$,~the naive local term $\text{Tr}(u'_{y})=1$.
\end{lem}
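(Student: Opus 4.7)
The plan is to compute $u'_y$ on stalks and reduce to the standard Lefschetz contribution of a transverse fixed point.

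Let $x = c_1'(y) = c_2'(y) \in U$. Since $c_2'$ is étale, proper base change gives
\[
(c'_{2!}c_1^{'*}\bar{\Q}_\ell[d])_x \;\simeq\; \bigoplus_{y' \in (c_2')^{-1}(x)} \bar{\Q}_\ell[d],
\]
with $u'_y$ equal to the $y$-th component of the induced map $u'|_x \colon \bigoplus_{y'} \bar{\Q}_\ell[d] \to \bar{\Q}_\ell[d]$. I would then trace through the composition of the four isomorphisms defining $u'$ in (\ref{trivial_cohomological_correspondence}) on the stalk at $y$. The outer two arrows are the canonical identifications for the pullback of the constant sheaf $\bar{\Q}_\ell$ under $c_1'$ and $c_2'$; the inner two arrows $(S)$ are the Verdier duality isomorphisms coming from the smoothness of $C'$ (of dimension $d$), related by the compatibility of $\mathbb{D}$ with the étale pullback $c_2^{'!} \simeq c_2^{'*}$. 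All of these isomorphisms are canonical, and their composition on the stalk at $y$ is the identity endomorphism of $\bar{\Q}_\ell[d]$ in the derived category.

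Once $u'_y$ has been identified in this way, the lemma reduces to the standard fact that the naive local term of the identity cohomological correspondence at a simple étale fixed point is $1$: this is the prototypical transverse contribution in the Lefschetz fixed-point formula, and is computed in \cite{Varshavsky}, Section 1.5.7. The main technical care lies in Step 2, namely verifying that the two smoothness isomorphisms $(S)$ in (\ref{trivial_cohomological_correspondence}) are normalized consistently so that their combined effect precisely absorbs the sign $(-1)^d$ from the shift $\bar{\Q}_\ell[d]$ appearing in the derived trace, leaving a final value of exactly $1$ rather than $\pm 1$. This is a routine but delicate verification of sign conventions for the Poincaré duality trivialization $\bar{\Q}_\ell[d] \simeq \mathbb{D}_{C'}\bar{\Q}_\ell[d]$ on the smooth variety $C'$.
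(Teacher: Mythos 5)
Your overall strategy---compute $u'_y$ on stalks, identify it as the identity up to canonical isomorphisms, and appeal to a known fact about \'etale morphisms---is the right shape and runs parallel to the paper's argument, but there are two concrete problems that leave the decisive step unproved.

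First, the reference is wrong. You cite \cite{Varshavsky}, Section 1.5.7 for the ``standard fact that the naive local term of the identity cohomological correspondence at a simple \'etale fixed point is $1$.'' Section 1.5.7 of \cite{Varshavsky} establishes something different: that for a correspondence contracting near a closed point, the local term agrees with the naive local term. It does not compute the naive local term of the counit for an \'etale map. The paper instead observes that the adjoint of $u'$ under $(c'_{2!}, c'^{!}_2)$, composed with the isomorphism in (\ref{trivial_cohomological_correspondence}), is exactly the counit $c'_{2!} c'^{!}_2 \bar{\Q}_{\ell}[d] \to \bar{\Q}_{\ell}[d]$, and then invokes \cite{Freitag_Kiehl}, Chapter II, Lemma 1.1, which is precisely the statement about the stalk of that counit for an \'etale morphism. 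Without that reference (or an equivalent explicit computation), the ``standard fact'' you appeal to is essentially what the lemma asserts, so nothing has been reduced.

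Second, your treatment of signs is internally inconsistent. You first assert that the composition of the four isomorphisms in (\ref{trivial_cohomological_correspondence}), restricted to the stalk at $y$, ``is the identity endomorphism of $\bar{\Q}_{\ell}[d]$.'' If that were literally true, the (super)trace of the identity on a one-dimensional complex concentrated in degree $-d$ would be $(-1)^d$, not $1$. You then say the $(S)$ isomorphisms are ``normalized consistently so that their combined effect precisely absorbs the sign $(-1)^d$,'' which contradicts the earlier claim---if the $(S)$'s contribute a sign, the composition is $(-1)^d$ times the identity, not the identity. You need to commit to one version and actually verify it; as written the two assertions cancel each other and the sign question stays open. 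The paper sidesteps this stalk-by-stalk unwinding entirely by reducing to the counit map and citing Freitag--Kiehl, which packages the relevant local computation (and its normalizations) in one place.
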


\begin{proof}
The map $c_{2!}^{'}c^{'*}_1\bar{\Q}_{\ell}[d] \to \bar{\Q}_{\ell}[d]$ induced by (\ref{trivial_cohomological_correspondence}) via adjunction between $(c^{'}_{2!},c^{'!}_2)$~under the isomorphism (\ref{trivial_cohomological_correspondence}),~corresponds to the natural adjunction map $c_{2!}^{'}c_2^{'!} \bar{\Q}_{\ell}[d] \to \bar{\Q}_{\ell}[d]$.~Since $c_{2}^{'}$ is \'etale,~this coincides with the trace map for quasi-finite flat morphism \cite[Chapter II, Lemma 1.1]{Freitag_Kiehl} and the result follows.

\end{proof}

In this set up we have the following Corollary to Proposition \ref{pullback_intersection_cohomology}.

\begin{cor}\label{cohomological_correspondence_lifting_correspondence}
There exists a cohomological correspondence (defined over $\F_q$)
\begin{equation}\label{cohomological_correspondence_lifting_correspondence_1}
u:c_1^*\text{IC}_X \to c_2^!\text{IC}_X
\end{equation}
lifting $c$ such that,~$u|_U$ (see Section \ref{restriction_cohomological_open}) is the correspondence (\ref{trivial_cohomological_correspondence}).
\end{cor}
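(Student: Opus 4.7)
The plan is to construct $u$ as the composition of two natural maps on $\text{IC}_C$ obtained from the two projections via the results built in Proposition \ref{pullback_intersection_cohomology} and its corollaries. Both morphisms $c_1,c_2\colon C \to X$ satisfy the hypotheses of Proposition \ref{pullback_intersection_cohomology}: they are dominant by assumption (3), and they are projective morphisms of projective varieties of the same dimension $d$ (assumption (2)). Since a model over $\F_q$ together with a choice of relatively ample line bundles for each projection has already been fixed, I can apply Corollary \ref{adjunction_map_intersection_cohomology} to $c_1$ to obtain a natural map
\[
c_1^{*} \colon c_1^{*}\text{IC}_X \longrightarrow \text{IC}_C
\]
defined over $\F_q$, which restricted to $c_1^{-1}(U)$ is the natural map $(\ref{natural_map_constant_to_intersection_complex})$. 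Dually, Corollary \ref{adjunction_dual_map_intersection_cohomology} applied to $c_2$ produces a map
\[
c_2^{!} \colon \text{IC}_C \longrightarrow c_2^{!}\text{IC}_X
\]
defined over $\F_q$, whose restriction to $c_2^{-1}(U)$ is dual to $(\ref{natural_map_constant_to_intersection_complex})$.

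I then set $u \colonequals c_2^{!} \circ c_1^{*}$; this is a morphism $c_1^{*}\text{IC}_X \to c_2^{!}\text{IC}_X$ defined over $\F_q$, which is the required cohomological correspondence lifting $c$. To check the compatibility with $(\ref{trivial_cohomological_correspondence})$, I restrict to $C' = c_1^{-1}(U)\cap c_2^{-1}(U)$. Since $U$ is smooth we have $\text{IC}_X|_U \simeq \bar{\Q}_\ell[d]$, and since $c_2|_{C'}$ is \'etale, $C'$ is smooth so $\text{IC}_C|_{C'} \simeq \bar{\Q}_\ell[d]$ as well. The compatibility statements in the two corollaries then say that $c_1^{*}|_{C'}$ is the natural map $\bar{\Q}_\ell[d] \to \bar{\Q}_\ell[d]$ coming from $(\ref{natural_map_constant_to_intersection_complex})$ on $C'$, and $c_2^{!}|_{C'}$ is its Verdier dual.

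The only point requiring a moment of care is unwinding these natural identifications to recognize the composition in $(\ref{trivial_cohomological_correspondence})$. On the smooth open $C'$ the map $\bar{\Q}_\ell[d] \to \text{IC}_{C}|_{C'}$ is the identity by definition of the intermediate extension, and similarly for $\text{IC}_X|_U$; moreover the isomorphism $c_2^{'*}\bar{\Q}_\ell[d] \simeq \bar{\Q}_\ell[d]$ from \'etaleness of $c_2|_{C'}$ is dual to the isomorphism $(S)$ appearing in $(\ref{trivial_cohomological_correspondence})$. Composing these identifications gives exactly the chain in $(\ref{trivial_cohomological_correspondence})$, so $u|_U$ coincides with the cohomological correspondence there. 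The main obstacle, such as it is, lies not in the construction itself but in having already established Proposition \ref{pullback_intersection_cohomology}; given it, the corollary is a direct assembly.
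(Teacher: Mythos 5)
Your construction is exactly the one the paper gives: apply Corollary \ref{adjunction_map_intersection_cohomology} to $c_1$ and Corollary \ref{adjunction_dual_map_intersection_cohomology} to $c_2$, then compose, with compatibility on $U$ following from smoothness of $C'$ and the compatibility clauses in those corollaries. The extra unwinding you provide of the identifications in (\ref{trivial_cohomological_correspondence}) is sound and only makes explicit what the paper leaves as an immediate consequence.
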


\begin{proof}

Corollary \ref{adjunction_map_intersection_cohomology} applied to $c_1:C \to X$ gives us a map (defined over $\F_q$)

\begin{equation}\label{cohomological_correspondence_lifting_correspondence_2}
c_1^*:c_1^*\text{IC}_X \to \text{IC}_C.
\end{equation}

Corollary \ref{adjunction_dual_map_intersection_cohomology} applied to $c_2:C \to X$ gives us a map (defined over $\F_q$)

\begin{equation}\label{cohomological_correspondence_lifting_correspondence_3}
c_2^!:\text{IC}_C \to c_2^!\text{IC}_X.
\end{equation}

We define $u:c^{*}_1\text{IC}_X \to c_2^!\text{IC}_X$ to be the composition of (\ref{cohomological_correspondence_lifting_correspondence_2}) and (\ref{cohomological_correspondence_lifting_correspondence_3}).~Clearly $u$ is also defined over $\F_q$.~Since $C'$ is smooth,~the compatibility with (\ref{trivial_cohomological_correspondence}) is an immediate consequence of the compatibility in Corollaries \ref{adjunction_map_intersection_cohomology} and \ref{adjunction_dual_map_intersection_cohomology}.

\end{proof}

We now show that (\ref{cohomological_correspondence_lifting_correspondence_1}) produces the `correct' leading term and error term in (\ref{main_theorem_estimate}).

Let $\tau \colon \bar{\Q}_{\ell} \hookrightarrow \C$ be an embedding.~Let $\delta$ be the generic degree of $c_1$.

\begin{prop}\label{global_term_correspondence_intersection_complex}
There exists a positive real number $M_{\text{glo}}(\tau)$ such that for any $n \geq 0$

\begin{equation}\label{global_term_correspondence_intersection_complex_1}
|\text{Tr}(R\Gamma(u^{(n)}))-\delta q^{nd}| \leq M_{\text{glo}}(\tau)q^{n(d-\frac{1}{2})}.
\end{equation}

Here $\text{Tr}(R\Gamma(u^{(n)}))$ is the trace of the endomorphism $R\Gamma(u^{(n)})$,~of the perfect complex (of $\bar{\Q}_{\ell}$ vector spaces) $R\Gamma(X,\text{IC}_X)$,~induced by the cohomological correspondence (\ref{cohomological_correspondence_lifting_correspondence_1}).
\end{prop}

\begin{proof}
Since $H^{i}(X,\text{IC}_X)$ vanishes for $i \notin [-d,d]$ (\cite{BBD},~Section 4.2.4),~by the definition of $\text{Tr}(R\Gamma(u^{(n)}))$ we have an equality

\begin{equation}\label{global_term_correspondence_intersection_complex_2}
\text{Tr}(R\Gamma(u^{(n)}))=\sum_{i=-d}^{d}(-1)^{i} \text{Tr}((c_{2*}c^{*}_1) \circ F^{n*}_{X/\F_q}):H^i(X,\text{IC}_X) \to H^i(X,\text{IC}_X)).
\end{equation}

Here $c_1^*$ and  $c_{2*}$ are the pullback and pushforward induced by (\ref{cohomological_correspondence_lifting_correspondence_2}) and (\ref{cohomological_correspondence_lifting_correspondence_3}) respectively.~Since $c_1$,~$c_2$ and $u$ are defined over $\F_q$,~$c_{2*}c_1^*$ commutes with $F^*_{X/\F_q}$.~Thus the bound (\ref{global_term_correspondence_intersection_complex_1}) is an immediate consequence of Proposition \ref{pushforward_pullback_intersection_cohomology},~and purity of $\text{IC}_X$ (\cite{BBD},~Th\'eor\`eme 5.4.10).

\end{proof}

\section{Local terms along a locally invariant subset}\label{local_terms_invariant_subset}

This section aims to prove Theorem \ref{bound_local_term_invariant_subscheme_introduction},~and along the way obtain a rationality statement.~As mentioned in the introduction we prove a similar bound for the larger class of \textit{essentially proper} correspondences.~We begin by defining them.

Let $k$ be an algebraic closure of the finite field $\F_q$.~Through this section we will be working over schemes of finite type over the field $k$.~Some of the results can be generalized in an obvious way to arbitrary algebraically closed fields.~But we do not need these generalisations for our purposes,~hence do not pursue them here.

\subsection{Essentially proper correspondences}

Let $X/k$ be a \textit{proper} scheme defined over $\F_q$.~Let $c \colon C \to X \times_k X$ be an arbitrary correspondence.

\begin{defn}[Essentially proper correspondence]\label{good_correspondences}
We say $c$ is an \textit{essentially proper} correspondence over $\F_q$,~if it can be factored as 

\begin{equation}\label{good_correspondece_definition}
\xymatrix{ C   \ar[d]_-{\bar{c}_U} \ar[dr]^-{c} &  \\
                  U   \ar@{^{(}->}[r]^-{j}               & X \times_k X             }
\end{equation}

\noindent such that

\begin{enumerate}[(a)]

\item $\bar{c}_U$ is proper.

\item $U=\cup_{i=1}^{r} U_i \times_k U_i$,~where $U_i$'s are open subsets of $X$ defined over $\F_q$ and cover $X$.

\end{enumerate}

\end{defn}

\begin{rmk}\label{proper_good}

Proper correspondences are trivially essentially proper (over any finite subfield containing $\F_q$).

\end{rmk}

The following is our motivation for studying essentially proper correspondences.

Let $\bar{c} \colon \bar{C} \to \bar{X} \times_k \bar{X}$ be a proper correspondence.~Let $Z \subseteq \bar{X}$ be a closed subset of $\bar{X}$ defined over $\F_q$,~which is locally $\bar{c}$-invariant over $\F_q$ (see Remark \ref{locally_invariant_subfield}).~Thus there exists a finite collection of open subsets $\{U_{i}\},~1 \leq i \leq r$ of $\bar{X}$ defined over $\F_q$,~which cover $Z$,~and set theoretic inclusions

\begin{equation}\label{equation_implying_local_invariance}
\bar{c}_2^{-1}(U_i \cap Z) \cap \bar{c}_1(U_i)  \subseteq \bar{c}_1^{-1}(Z) ,
\end{equation}
for every $i$.

Let $U \colonequals \bar{X} \backslash Z$.~Note that $U$ is also defined over $\F_q$.~Clearly $U$ and $U_i$'s together form an open cover of $\bar{X}$.~Let $W \colonequals \bar{c}^{-1}\left ( \left (U \times_k U \right ) \cup \left (\cup_{i=1}^{r}(U_{i} \times_k U_{i} \right ) \right )$.

\begin{lem}\label{neighbourhood_fixed_points}
$W$ is a neighbourhood of fixed points of $\bar{c}^{(n)}$ such that,~$Z$ is $\bar{c}^{(n)}|_W$-invariant for any $n \geq 0$.
\end{lem}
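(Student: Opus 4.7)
The plan is to verify both assertions directly by a set-theoretic case analysis on $W$, using two facts about the cover: that each of $U$ and the $U_i$'s is stable under $F_{X/\F_q}$ (being defined over $\F_q$), and that local invariance over $\F_q$ gives the set-theoretic inclusions $\bar{c}_2^{-1}(U_i\cap Z)\cap\bar{c}_1^{-1}(U_i)\subseteq\bar{c}_1^{-1}(Z)$. I will only need to argue at the level of $k$-points, since both claims are set-theoretic.

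First I would show $W\supseteq\text{Fix}(\bar{c}^{(n)})$ for every $n\ge 0$, which gives the neighbourhood-of-fixed-points statement. Take $x\in\text{Fix}(\bar{c}^{(n)})(k)$, so $F^n_{X/\F_q}(\bar{c}_1(x))=\bar{c}_2(x)$. Since $\{U\}\cup\{U_i\}$ is an open cover of $\bar{X}$, the point $\bar{c}_1(x)$ lies in $U$ or in some $U_i$. Because $U$ and each $U_i$ are defined over $\F_q$ they are $F_{X/\F_q}$-stable, so $\bar{c}_2(x)=F^n_{X/\F_q}(\bar{c}_1(x))$ lies in the same open, and hence $x\in\bar{c}^{-1}(U\times_k U)$ or $x\in\bar{c}^{-1}(U_i\times_k U_i)$. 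Either way $x\in W$.

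Next I would verify that $Z$ is $\bar{c}^{(n)}|_W$-invariant. Unwinding the definition of $\bar{c}^{(n)}$, this amounts to showing that for every $w\in W$ with $\bar{c}_2(w)\in Z$, one has $F^n_{X/\F_q}(\bar{c}_1(w))\in Z$. Since $Z$ itself is defined over $\F_q$ it is $F_{X/\F_q}$-stable, so this reduces to showing $\bar{c}_1(w)\in Z$. Now $w$ belongs to at least one of the opens whose union defines $W$. If $w\in\bar{c}^{-1}(U\times_k U)$ then $\bar{c}_2(w)\in U=\bar{X}\setminus Z$, contradicting $\bar{c}_2(w)\in Z$; so this case is vacuous. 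Otherwise $w\in\bar{c}^{-1}(U_i\times_k U_i)$ for some $i$, which means $\bar{c}_1(w)\in U_i$ and $\bar{c}_2(w)\in U_i\cap Z$. The local invariance inclusion $\bar{c}_2^{-1}(U_i\cap Z)\cap\bar{c}_1^{-1}(U_i)\subseteq\bar{c}_1^{-1}(Z)$ then immediately gives $\bar{c}_1(w)\in Z$, as desired.

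The argument is elementary and essentially mechanical; there is no real obstacle. The only point worth emphasizing is the role played by all relevant objects being defined over $\F_q$: this is what allows both the Frobenius iterates $F^n_{X/\F_q}$ to respect the cover $\{U,U_i\}$ and the subset $Z$, and it is what lets the single open cover witnessing local invariance of $Z$ serve simultaneously for all twists $\bar{c}^{(n)}$ (so that $W$ is chosen once and for all, independently of $n$).
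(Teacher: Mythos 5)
Your proof is correct and follows essentially the same route as the paper's. The one small difference is that for the first assertion the paper simply observes that $\left(U\times_k U\right)\cup\bigl(\cup_i U_i\times_k U_i\bigr)$ contains $\Delta^{(n)}_{\bar X}$ and so its preimage $W$ contains $\mathrm{Fix}(\bar c^{(n)})$, whereas you spell out explicitly why the graph of $F^n_{X/\F_q}$ sits inside that open set (namely, the cover $\{U,U_i\}$ consists of $\F_q$-rational opens, so each is stable under Frobenius); for the second assertion, the two arguments are identical, including the key observation that $U=\bar X\setminus Z$ makes the $U\times_k U$ case vacuous.
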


\begin{proof}
Since $\left ( \left (U \times_k U \right ) \cup \left (\cup_{i=1}^{r}(U_{i} \times_k U_{i} \right ) \right )$ is an open neighbourhood of $\Delta^{(n)}_{\bar{X}}$,~$W$ by definition is an open neighbourhood of $\text{Fix}(\bar{c}^{(n)})$.

Now we show that $Z$ is $\bar{c}^{(n)}|_W$-invariant for every $n \geq 0$.~Let $x \in W$ be such that $\bar{c}_2(x) \in Z$.~We need to show that $F^n_{\bar{X}/\F_q}(\bar{c}_1(x)) \in Z$,~for every $n \geq 0$.~Since $Z$ is defined over $\F_q$ it suffices to prove that $\bar{c}_1(X) \in Z$.

Since $x \in W=\bar{c}^{-1}((U \times_k U) \cup (\cup_{i=1}^{r}(U_{i} \times_k U_{i}))$,~$\bar{c}_2(x) \in Z$ implies that $x \in \bar{c}^{-1}(U_i \times U_i)$ for some index $i$.~Thus $\bar{c}_2(x) \in U_i \cap Z$ and $\bar{c}_1(x) \in U_i$.~(\ref{equation_implying_local_invariance}) then implies that $\bar{c}_1(x) \in Z$.

\end{proof}

In light of Lemma \ref{neighbourhood_fixed_points} our discussion in Section \ref{local_term_subscheme_invariant_neighbourhood_fixed_points} applies to $Z$,~and we can make sense of the correspondences $\bar{c}|_{W,Z}$.~Consider the Cartesian diagram

\begin{equation}\label{Cartesian_locally_invariant}
\xymatrixcolsep{4pc} \xymatrix{ W \cap \bar{c}_1^{-1}(Z) \cap \bar{c}_2^{-1}(Z) \ar@{^{(}->}[r] \ar[d] &  \bar{c}_1^{-1}(Z) \cap \bar{c}_2^{-1}(Z) \ar[d] \\
                 \cup_i((U_i \cap Z) \times_k (U_i \cap Z))  \ar@{^{(}->}[r]                   & Z \times_k Z                  }.
\end{equation}

\subsubsection{}\label{correspondence_good_locally_invariant}The correspondence $\bar{c}|_{W,Z}$ is then by definition the map from $W \cap \bar{c}_1^{-1}(Z) \cap \bar{c}_2^{-1}(Z)$ to $Z \times_k Z$.~The Cartesian diagram (\ref{Cartesian_locally_invariant}) trivially implies the following Lemma.

\begin{lem}\label{local_invaraince_good}
$\bar{c}|_{W,Z}$ is an essentially proper correspondence over $\F_q$.
\end{lem}

Moreover since $Z$ is defined over $\F_q$,~we can make sense of $\bar{c}^{(n)}|_{W,Z}$,~and clearly we have

\begin{equation}\label{twist_commutes_restriction}
\bar{c}^{(n)}|_{W,Z}=\bar{c}|_{W,Z}^{(n)},
\end{equation}

\noindent as correspondences from $W \cap \bar{c}_1^{-1}(Z) \cap \bar{c}_2^{-1}(Z)$ to $Z \times_k Z$.~

\subsubsection{}\label{cohomological_correspondence_good_locally_invariant} Now suppose we are given a cohomological correspondence $u$ of a Weil sheaf $\sF$ on $\bar{X}$ lifting the correspondence $\bar{c}$.~Then Lemma \ref{neighbourhood_fixed_points} and the discussion in Section \ref{local_term_subscheme_invariant_neighbourhood_fixed_points} implies that there exists a cohomological corespondence $u^{(n)}|_{W,Z}$ of $\sF|_Z$ lifting the correspondence $c^{(n)}|_{W,Z}$.~Here $\sF|_Z$ is given the natural Weil structure coming from restricting the one on $\sF$.

In particular we can calculate $\text{LT}(u^{(n)}|_Z)$ \textit{using} $W$.~Also note that analogous to (\ref{twist_commutes_restriction}) we have

\begin{equation}\label{twist_commutes_restriction_cohomological}
u^{(n)}|_{W,Z}=(u|_{W,Z})^{(n)}
\end{equation}

\noindent as cohomological correspondences of $\sF|_Z$ lifting $\bar{c}^{(n)}|_{W,Z}=\bar{c}|_{W,Z}^{(n)}$.~In particular

\begin{equation}\label{twist_commutes_restriction_cohomological_local}
\text{LT}(u^{(n)}|_Z)=\text{LT}((u|_{W,Z})^{(n)}).
\end{equation}

Having motivated essentially proper correspondences,~we now establish some basic properties of essentially proper correspondences over $\F_q$.

As before let $c \colon C \to X \times_k X$ be an essentially proper correspondence over $\F_q$.~Recall that $X$ is assumed to be proper and defined over $\F_q$.~In particular we have a  diagram (\ref{good_correspondece_definition}).

\begin{lem}\label{all_twists_good}
$c^{(n)}$ is essentially proper over $\F_q$ for all $n \geq 1$.
\end{lem}

\begin{proof}

Let $F_{l} \colonequals F_{X/\F_q} \times 1_X$ be the partial Frobenius on $X \times_k X$.~The diagram (\ref{good_correspondece_definition}) can be enlarged to the following diagram

\begin{equation}\label{good_correspondece_all_twists_good}
\xymatrix{ C  \ar[d]^{\bar{c}_U} \ar[dr]^-{c}   \\
                  U   \ar@{^{(}->}[r]^-{j}    \ar[d]^-{F^n_{l}|_U}          & X \times_k X      \ar[d]^-{F^{n}_{X/\F_q} \times 1_X}  \\ 
                  U   \ar@{^{(}->}[r]^-{j}               & X \times_k X     }.
\end{equation}

The square is Cartesian because of the condition $(b)$ in Definition \ref{good_correspondences}.~Thus the outer square in (\ref{good_correspondece_all_twists_good}) implies that $c^{(n)}$ is also essentially proper over $\F_q$.

\end{proof}


The following Lemma is analogous to Lemma \ref{neighbourhood_fixed_points}.

\begin{lem}\label{graph_transpose_contained_in_open}
For any $n \geq 0$,~the graphs of $F^{n}_{X/\F_q}$ and their transpose are contained in $U$ (see Diagram \ref{good_correspondece_definition}).~Thus $\text{Fix}(c^{(n)})$ is proper over $k$.
\end{lem}

\begin{proof}
By definition $U=\cup_{i=1}^{r} U_i \times_k U_i$,~where $U_i$'s are an open cover of $X$ defined over $\F_q$.~Hence the graphs of $F^{n}_{X/\F_q}$ and their transpose are contained in $U$.~Thus the  diagram (\ref{good_correspondece_definition}) implies that $\text{Fix}(c^{(n)})=\bar{c}_U^{-1}(\Delta^{(n)})$.~Hence $\text{Fix}(c^{(n)})$ is proper over $k$.

\end{proof}

Lemma \ref{graph_transpose_contained_in_open} implies that,~if we are given a cohomological correspondence $u$ of a Weil sheaf $\sF$ on $X$ lifting the possibly \textit{non-proper} but essentially proper correspondence $c$,~we can make sense of the local terms $\text{LT}(u^{(n)})$ (Definition \ref{Local_term}).

Now suppose  $\sF_0 \in D^{b}_{\leq w}(X_0,\bar{\Q}_{\ell})$ is a mixed sheaf of weight less than or equal to $w$ on $X_0$.~Suppose that $\sF_0$ is in $~^pD^{\leq a}(X_0,~\bar{\Q}_{\ell})$.~Here $X_0$ is the chosen model of $X$ over $\F_q$.~Let $u$ be a cohomological correspondence of $\sF$ lifting $c$,~an essentially proper correspondence over $\F_q$.

Fix a field isomorphism (say $\tau$) of $\bar{\Q}_{\ell}$ with $\C$.


This section aims to obtain the following estimate for the local terms.

\begin{thm}\label{local_term_growth_good_correspondence}

For any $\e>0$,~there exists a natural number $N(\e)$ and a positive real number $M(\tau)$ such that,~for any $n \geq N(\e)$

\begin{equation}\label{local_term_growth_good_correspondence_1}
|\text{LT}(u^{(n)})| \leq M(\tau)q^{n(\frac{(w+a+\text{dim}(X))}{2}+\e)}.
\end{equation}
Here the norm on the left is with respect to the chosen isomorphism $\tau$.
\end{thm}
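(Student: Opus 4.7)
The plan is to follow the architecture laid out in the overview of Section~3: express $\text{LT}(u^{(n)})$ as the value of a fixed linear functional on a vector obtained by iterating a partial-Frobenius operator on a single cohomology class, then control growth by bounding the weights of that operator.

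First, I would construct a pairing
\begin{equation*}
\Phi \colon H^{0}_c(U, j^{*}(\sF \boxtimes \mathbb{D}_X\sF)) \otimes_{\bar{\Q}_\ell} \Hom(c_1^{*}\sF, c_2^{!}\sF) \to H^{0}_c(C, K_C)
\end{equation*}
from the Cartesian square (\ref{good_correspondece_definition}) defining goodness. The construction is dual to Varshavsky's recipe for the trace map: interpret the second factor via Illusie's isomorphism, pair it with a class on $U$ by $(j_!,j^{*})$-adjunction and base change along the Cartesian square, and use the evaluation map (\ref{evaluation_trace}). Composing with the proper trace map on $H^{0}_c(C,K_C)$ gives, for each cohomological correspondence $u$, a linear functional $\Phi_u$ on $H^{0}_c(U, j^{*}(\sF \boxtimes \mathbb{D}_X\sF))$.

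Next, using Lemma~\ref{graph_transpose_contained_in_open} (the graphs $\Delta^{(n)}$ lie in $U$) together with the Weil structure $F^{n}_{\sF}\colon F^{n*}_{X/\F_q}\sF \simeq \sF$ and the evaluation map, I would define classes $[\Delta^{(n)}] \in H^{0}_c(U, j^{*}(\sF \boxtimes \mathbb{D}_X\sF))$. Two compatibilities are then needed. The first is the trace-formula identity $\Phi_u([\Delta^{(n)}]) = \text{LT}(u^{(n)})$, verified by a diagram chase comparing the dual construction of $\Phi$ with Illusie's pairing (\ref{illusie_isomorphism}), combined with the fact that $\text{Fix}(c^{(n)}) = \text{Fix}(\bar{c}^{(n)})$ is proper so the Lefschetz--Verdier formalism of Section~4 applies. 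The second is the equivariance $(F_l^{*})^n[\Delta] = [\Delta^{(n)}]$ for the partial Frobenius $F_l = F_{X/\F_q}\times 1_X$ acting on $U$ (which preserves $U$ by condition~(b) of Definition~\ref{good_correspondences}); this is essentially immediate from the definition of $[\Delta^{(n)}]$ via the $n$-fold iterate of $F_\sF$.

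The heart of the argument is then a weight estimate for $F_l^{*}$ on $H^{0}_c(U, j^{*}(\sF \boxtimes \mathbb{D}_X\sF))$. Because $F_l$ acts only on the first factor and $\sF_0$ has weight $\le w$ there, the object $\sF_0\boxtimes \mathbb{D}_{X_0}\sF_0$ is, \emph{with respect to $F_l$ alone}, effectively a mixed sheaf of weight $\le w$ on a variety of dimension $\dim X$; combined with the hypothesis $\sF_0\in{}^pD^{\le a}$, the standard weight/perversity bounds of \cite{BBD} yield eigenvalues of $F_l^{*}$ of $\tau$-absolute value at most $q^{(w+a+\dim X)/2}$. To make the ``effective dimension'' argument precise I would work locally over the open cover $\{U_i\times_k U_i\}$ of $U$, use Mayer--Vietoris to reduce to the case of a single product $U_i\times_k U_i$, and apply a Künneth-type argument where the partial Frobenius on the second factor is the identity.

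Finally, writing $\text{LT}(u^{(n)}) = \Phi_u((F_l^{*})^n[\Delta])$, we see that $\text{LT}(u^{(n)})$ is a fixed linear functional applied to the $n$-th iterate of a fixed endomorphism with bounded eigenvalues acting on a fixed class. A standard linear algebra estimate (absorbing the polynomial factors from Jordan blocks into $q^{n\e}$) delivers the bound (\ref{local_term_growth_good_correspondence_1}). I expect the principal obstacles to be (i) verifying the trace-formula identity $\Phi_u([\Delta^{(n)}]) = \text{LT}(u^{(n)})$, which requires reconciling the ``dual'' pairing construction with Illusie's, and (ii) making the weight bound on $F_l^{*}$ rigorous, since $U$ is not literally a product and the partial-Frobenius action on compactly supported cohomology of $j^{*}(\sF\boxtimes \mathbb{D}_X\sF)$ must be analyzed with some care through the open cover.
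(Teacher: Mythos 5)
Your proposal follows precisely the architecture of the paper's proof: the pairing $\Phi$ and functional $\Phi_u$ of (\ref{main_pairing})--(\ref{linear_functional}), the identity $\Phi_u([\Delta^{(n)}])=\text{LT}(u^{(n)})$ of Proposition \ref{local_terms_using_functional}, the partial-Frobenius equivariance $(F_l^*)^n[\Delta]=[\Delta^{(n)}]$ of Lemma \ref{Frobenius_pullback_cohomology_classes}, the Mayer--Vietoris/K\"unneth weight bound of Lemma \ref{partial_weights_Frobenius}, and the Jordan-block linear algebra estimate of Lemma \ref{linear_algebra_bound}. You even correctly flag the two technically delicate points (reconciling the dual pairing with Illusie's, and the non-product structure of $U$), so this is essentially the same argument as the paper's.
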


Theorem \ref{local_term_growth_good_correspondence} follows easily from the Lefschetz-Verdier trace formula if one assumes $c$ is proper (and not just essentially proper).~In light of Lemma \ref{local_invaraince_good} and the equality (\ref{twist_commutes_restriction_cohomological_local}),~Theorem \ref{bound_local_term_invariant_subscheme_introduction} is an immediate consequence of Theorem \ref{local_term_growth_good_correspondence} (with $Z$ playing the role of $X$ above).

Now we show that Theorem \ref{bound_local_term_invariant_subscheme_introduction} combined with the results of the earlier sections implies $(1)$ and $(3)$ in Theorem \ref{non_uniform_estimate}.

\subsection{Theorem \ref{bound_local_term_invariant_subscheme_introduction} implies Theorem \ref{non_uniform_estimate},~$(1)$ and $(3)$}\label{enough_to_bound_local}

\begin{proof}
Recall that we have a correspondence $c \colon C \to X \times_k X$,~together with a compactification $\bar{c} \colon \bar{C} \subseteq \bar{X} \times_k \bar{X}$ of $c$.~Lemma \ref{basic_reduction},~Proposition \ref{reduction_generically_etale} and Proposition \ref{reduction_to_locally_invariant_boundary} imply that we can assume the following

\begin{enumerate}[(a)]

\item $X$ is smooth over $k$.

\item $\bar{c}_2$ restricted to $\bar{c}^{-1}(X \times_k X)=C$ is \'etale.

\item $\bar{c}$ leaves $Z \colonequals \bar{X} \backslash X$ locally invariant (see Definition \ref{locally_invariant_closed}) over $\F_q$.

\end{enumerate}

Since $Z$ is defined over $\F_q$,~$(c)$ implies that $Z$ is also locally $\bar{c}^{(n)}$-invariant over $\F_q$ for all $n \geq 1$.~Thus \cite{Varshavsky},~Corollary 2.2.4 implies that there exists an integer $N'$ such that for all $n \geq N'$,~$\bar{c}^{(n)}$ is contracting in a neighbourhood of fixed points around $Z$ (see Definition \ref{contracting_fixed_points}) and $\bar{c}^{(n)}|_X$ is contracting near every closed point of $X$ in a neighbourhood of fixed points.

Let $u \colon \bar{c}_1^*\text{IC}_{\bar{X}} \to \bar{c}_2^{!} \text{IC}_{\bar{X}}$ be the cohomological correspondence defined in Corollary \ref{cohomological_correspondence_lifting_correspondence}.~Since $\text{Fix}(\bar{c}^{(n)})$ is proper over $k$,~we can apply Corollary \ref{shape_of_trace_formula} to the cohomological correspondence $u^{(n)}$ for any $n \geq N'$.

Corollary \ref{shape_of_trace_formula} implies that for $n \geq N'$,~$\text{Fix}(c^{(n)})$ is finite.~Moreover we have for any $n \geq N'$,

\begin{equation}\label{local_term_decomposition_proof}
 \text{LT}(u^{(n)})=\sum_{\beta \in \text{Fix}(\bar{c}^{(n)}|_X) } \text{Tr}(u^{(n)}_{\beta})+ \text{LT}(u^{(n)}|_Z).
\end{equation}

Since $\bar{c}^{-1}(X \times X)=C$ and $u|_{X}$ is the correspondence (\ref{trivial_cohomological_correspondence}),~Lemma \ref{naive_local_term_calculate} implies that 

\begin{equation}\label{main_theorem_estimate_proof_1}
\sum_{\beta \in \text{Fix}(\bar{c}^{(n)}|_{X})} \text{Tr}(u^{(n)}_{\beta})=\#\text{Fix}(c^{(n)})(k).
\end{equation}

Since the restriction of $\text{IC}_{\bar{X}}$ to $Z$ is of weight less than or equal to $\text{dim}(X)$ (\cite{BBD},~5.1.14 (i)) and belongs to $~^pD^{\leq -1}$ (\cite{BBD},~Corollaire 1.4.24),~Theorem \ref{bound_local_term_invariant_subscheme_introduction} implies that there exists a positive integer $N''$ such that for all $n \geq N''$,

\begin{equation}\label{main_theorem_estimate_proof_2}
| \text{LT}(u^{(n)}|_Z))| \leq M_{\text{loc}}(\tau)q^{n(\text{dim}(X)-\frac{1}{2})}
\end{equation}

\noindent for some positive real number $M_{\text{loc}}(\tau)$ (see Remark \ref{perversity_drop_remark}).

Finally note that Lefschetz-Verdier trace formula (Corollary \ref{Lefschetz-Verdier}) and Proposition \ref{global_term_correspondence_intersection_complex} together imply that 

\begin{equation}\label{main_theorem_estimate_proof_3}
|\text{LT}(u^{(n)})- \delta q^{n \text{dim}(X)}|  \leq M_{\text{glo}}(\tau)q^{n(\text{dim}(X)-\frac{1}{2})},
\end{equation}

\noindent for some positive real number $M_{\text{glo}}(\tau)$ (depending on $\tau$).

Thus combining (\ref{local_term_decomposition_proof}),~(\ref{main_theorem_estimate_proof_1}),~(\ref{main_theorem_estimate_proof_2}) and (\ref{main_theorem_estimate_proof_3}) we obtain the bound

\begin{equation}\label{main_theorem_estimate_proof_4}
|\#\text{Fix}(c^{(n)})(k)-\delta q^{n \text{dim}(X)}| \leq M(\tau)q^{n(\text{dim}(X)-\frac{1}{2})},
\end{equation}

\noindent for all $n \geq N=\text{max}\{N',N''\}$ with $M(\tau)=M_{\text{loc}}(\tau)+M_{\text{glo}}(\tau)$.

\end{proof}

\begin{rmk}\label{perversity_drop_remark}
The bound (\ref{main_theorem_estimate_proof_2}) is a consequence of \textit{both} the dimension and perversity dropping when restricted to the boundary.~The bound in (\ref{local_term_growth_good_correspondence_1}) has an error term of $\e$ and if we did not have the perversity drop,~Theorem \ref{bound_local_term_invariant_subscheme_introduction} would give us a weaker bound with the local term growing as $q^{n(\text{dim}(X)-1/2+\e)}$,~which is clearly insufficient for our purposes.
\end{rmk}

The rest of the article is devoted to the proof of Theorem \ref{local_term_growth_good_correspondence}.

\subsection{The pairing}\label{the_pairing}

%
%
%
%
%
%
%
%
%
%
%
%

Let $c:C \to X \times_k X$ be an essentially proper correspondence over $\F_q$,~with the choice of a factorization as in (\ref{good_correspondece_definition}).~Let $Z:=(X \times_k X \backslash U)_{\text{red}}$ the complimentary closed subscheme and $i:Z \hookrightarrow X \times_k X$ be the corresponding closed immersion.

Choose an arbitrary compactification $\bar{c}:\bar{C} \to X \times_k X$ of $c$.~Let $\partial \bar{C}$ be the reduced complement of $C$ in $\bar{C}$.~Let $i_{\partial \bar{C}}$ and $\bar{c}_Z$ be the induced maps from $\partial \bar{C}$ to $\bar{C}$ and $Z$ respectively.

Thus we have a diagram

\begin{equation}\label{the_set_up}
\xymatrix{ C \ar@{^{(}->}[r]^-{j_C} \ar[d]^-{\bar{c}_U} & \bar{C} \ar[d]_-{\bar{c}}  & \partial \bar{C} \ar@{_{(}->}[l]_-{i_{\partial \bar{C}}} \ar[d]_-{\bar{c}_Z} \\
                  U   \ar@{^{(}->}[r]^-{j}               & X \times_k X              &  Z    \ar@{_{(}->}[l]_-{i}           }
\end{equation}

\noindent where both the squares are Cartesian (upto nilpotents) as a consequence of Lemma \ref{compactification_Cartesian}.~On $\bar{C}$ we have an exact triangle 

\begin{equation}\label{canonical_exact_above}
\xymatrix{j_{C!}\bar{\Q}_{\ell} \ar[r] & j_{C*} \bar{\Q}_{\ell} \ar[r] & i_{\partial \bar{C}*}i_{\partial \bar{C}}^*j_{C*} \bar{\Q}_{\ell} \ar[r]^-{+1}&}.
\end{equation}

Pushing forward this triangle to $X \times_k X$ via $\bar{c}_*=\bar{c}_!$,~we obtain an exact triangle

\begin{equation}\label{pushoforward_canonical_exact_above}
\xymatrix{c_{!} \bar{\Q}_{\ell} \ar[r] & c_* \bar{\Q}_{\ell} \ar[r] & i_{*}\bar{c}_{Z*}i_{\partial \bar{C}}^*j_{C*} \bar{\Q}_{\ell} \ar[r]^-{+1}&}
\end{equation}

\noindent on $X \times_k X$.

Clearly $\text{Hom}(i_* \hspace{1mm} ,~j_* \hspace{1mm}) \simeq 0$.~Thus applying the cohomological functor $\text{Hom}(\hspace{1mm}, j_{*}j^*( \mathbb{D}_X  \sF \boxtimes \sF))$ to the triangle (\ref{pushoforward_canonical_exact_above}),~we get an isomorphism

\begin{equation}\label{isomorphic_Hom_Spaces}
\text{Hom}(c_!\bar{\Q}_{\ell}, j_{*}j^*( \mathbb{D}_X  \sF \boxtimes \sF)) \simeq \text{Hom}(c_*\bar{\Q}_{\ell},j_{*}j^*( \mathbb{D}_X  \sF \boxtimes \sF)).
\end{equation}

The natural map $\sF \boxtimes \mathbb{D}_X \sF \to j_{*}j^*( \mathbb{D}_X  \sF \boxtimes \sF)$ combined with the isomorphisms (\ref{illusie_isomorphism}) and (\ref{isomorphic_Hom_Spaces}) gives a natural map

\begin{equation}\label{cohomological_correspondence_lift_equation}
\text{Hom}(c_1^*\sF,c_2^!\sF) \to \text{Hom}(c_*\bar{\Q}_{\ell}, j_{*}j^*(\mathbb{D}_X \sF \boxtimes \sF)).
\end{equation}

%
%

By duality and properness of $X \times_k X$,~we have isomorphisms 

\begin{equation}\label{first_basic_isomorphism}
\text{Hom}( j_{*}j^*(\mathbb{D}_X \sF \boxtimes \sF),K_{X \times_k X}) \simeq \text{Hom}(\bar{\Q}_{\ell}, j_{!}j^*(\sF \boxtimes \mathbb{D}_X \sF)) \simeq H^{0}_c(U,~j^*(\sF \boxtimes \mathbb{D}_X \sF)).
\end{equation}

%

Also, there exists a  pairing

\begin{equation}\label{basic_pairing}
 \text{Hom}(c_*\bar{\Q}_{\ell}, j_{*}j^*(\mathbb{D}_X \sF \boxtimes \sF)) \otimes \text{Hom}( j_{*}j^*(\mathbb{D}_X \sF \boxtimes \sF),K_{X \times_k X}) \to \text{Hom}(c_*\bar{\Q}_{\ell},K_{X \times_k X}),
\end{equation}

\noindent given by the composition of maps.~Moreover by adjunction we have a natural map 

\begin{equation}\label{Final_isomorphism_pairing}
\text{Hom}(c_*\bar{\Q}_{\ell},K_{X \times_k X}) \to \text{Hom}(\bar{\Q}_{\ell},K_{X \times_k X}) \simeq H^0(X \times_k X, K_{X \times_k X}).
\end{equation}

Thus combining (\ref{cohomological_correspondence_lift_equation}),~(\ref{first_basic_isomorphism}),~(\ref{Final_isomorphism_pairing}) and (\ref{basic_pairing}) we get a natural pairing

\begin{equation}\label{main_pairing}
\Phi \colon \text{Hom}(c_1^*\sF,c_2^!\sF) \otimes_{\bar{\Q}_{\ell}} H^{0}_c(U,~j^*(\sF \boxtimes \mathbb{D}_X \sF)) \to H^{0}(X \times_k X, K_{X \times_k X}).
\end{equation}

In particular if we fix a cohomological correspondence $u$ of $\sF$ lifting $c$,~we get a \textit{linear functional} on $H^{0}_c(U,~j^*(\sF \boxtimes \mathbb{D}_X \sF))$ given by 

\begin{equation}\label{linear_functional}
\Phi_u(\beta) \colonequals \text{Tr}_{X \times_k X}(\Phi(u \otimes \beta)),
\end{equation}

\noindent where $\text{Tr}_{X \times_k X}$ is the natural trace map on $H^{0}(X \times_k X,K_{X \times_k X})$ (recall that $X$ is proper over $k$).

\subsection{Trace along the graphs of Frobenius}

Let $k$ be an algebraic closure of $\F_q$.~Let $c:C \to X \times_k X$ be a correspondence defined over $\F_q$.~Let $\sF$ be a Weil sheaf on $X$.~Let $u$ be a cohomological correspondence of $\sF$ lifting $c$.~For each $n$ we have a Cartesian diagram

\begin{equation}\label{graph_Frobenius_local_terms}
\xymatrix{\text{Fix}(c^{(n)}) \ar[r] \ar@{^{(}->}[d]_-{\Delta^{(n)'}} \ar@/^2pc/[rr]^-{c^{(n)'}} & X \ar@{^{(}->}[d]_-{\Delta^{(n)}} \ar[r]^{F^n_{X/\F_q}} & X \ar@{^{(}->}[d]^-{\Delta} \\
                  C \ar[r]^-{c} \ar@/_2pc/[rr]_-{c^{(n)}} & X \times_k X                       \ar[r]^-{F^n_{X/\F_q} \times 1_X}         &  X \times_k X }.
\end{equation}

We briefly recall the map  (see \ref{trace_Along_graph_Frobenius_Appendix})

\begin{equation}\label{trace_Along_graph_Frobenius}
\mathcal{T}\textit{r}^{'(n)}_c \colon \text{Hom}(c_{2!}c_1^*\sF,\sF) \to H^0(\text{Fix}(c^{(n)}),K_{\text{Fix}(c^{(n)}})
\end{equation}

\noindent constructed in the Section \ref{trace_maps} here.~Let $u$ be a cohomological correspondence of $\sF$ lifting $c$.~Thus $u$ corresponds to a map

\begin{equation}\label{two_ways_Frobenius_twist_same_Varshavsky_3}
c_!\bar{\Q}_{\ell} \to (\mathbb{D}_X \sF \boxtimes \sF),
\end{equation}

\noindent or equivalently to 

\begin{equation}\label{two_ways_Frobenius_twist_same_Varshavsky_3'}
\bar{\Q}_{\ell} \to c^!(\mathbb{D}_X \sF \boxtimes \sF),
\end{equation}

Composing (\ref{two_ways_Frobenius_twist_same_Varshavsky_3}) and (\ref{Frobenius_cohomology_class_3})  we get a map

\begin{equation}\label{two_ways_Frobenius_twist_same_Varshavsky_4}
c_!\bar{\Q}_{\ell} \to \Delta^{(n)}_{*}K_{X}.
\end{equation}

Applying $c^!$ to (\ref{Frobenius_cohomology_class_1}) and composing with (\ref{two_ways_Frobenius_twist_same_Varshavsky_3'}) we get a map (denoted by $u^{(n)}$)

\begin{equation}\label{cohomological_correspondence_twist_1}
u^{(n)} \colon \bar{\Q}_{\ell} \to c^{(n)!}(\mathbb{D}_X \sF \boxtimes \sF),
\end{equation}

Moreover, we have natural isomorphisms 

\begin{equation}\label{two_ways_Frobenius_twist_same_Varshavsky_5}
\text{Hom}(c_!\bar{\Q}_{\ell},\Delta^{(n)}_{*}K_{X}) \simeq \text{Hom}(\bar{\Q}_{\ell},c^!\Delta^{(n)}_{*}K_{X}),
\end{equation}

\noindent and 

\begin{equation}\label{two_ways_Frobenius_twist_same_Varshavsky_6}
 \text{Hom}(\bar{\Q}_{\ell},c^!\Delta^{(n)}_{*}K_{X}) \simeq \text{Hom}(\bar{\Q}_{\ell} ,\Delta^{(n)'}_*K_{\text{Fix}(c^{(n)})}) \simeq H^0(\text{Fix}(c^{(n)}),K_{\text{Fix}(c^{(n)})}).
\end{equation}

Here (\ref{two_ways_Frobenius_twist_same_Varshavsky_5}) comes from adjunction,~and (\ref{two_ways_Frobenius_twist_same_Varshavsky_6}) from base change along the left inner square of ({\ref{graph_Frobenius_local_terms}).~Combining (\ref{two_ways_Frobenius_twist_same_Varshavsky_4}), (\ref{two_ways_Frobenius_twist_same_Varshavsky_5}) and (\ref{two_ways_Frobenius_twist_same_Varshavsky_6}) we get an element in $H^0(\text{Fix}(c^{(n)}),K_{\text{Fix}(c^{(n)})})$,~which we call $\mathcal{T}\textit{r}^{'(n)}_c(u)$.

The maps $\mathcal{T}\textit{r}_c^{'(n)}$ are similar to $\mathcal{T}\textit{r}_c$ in (\ref{Varashavsky_Trace}),~but adapted to the graphs of Frobenius.~In fact these two trace maps are compatible in an obvious way.

\begin{lem}\label{two_ways_Frobenius_twist_same_Varshavsky}
$\mathcal{T}\textit{r}_c(u^{(n)})=\mathcal{T}\textit{r}_c^{'(n)}(u)$.
\end{lem}

\begin{proof}

For ease of writing we set $\sG=\mathbb{D}_X \sF \boxtimes \sF$.~ Let us recall Varshavsky's recipe (see Section \ref{trace_maps}) to compute $\mathcal{T}\textit{r}_c(u^{(n)})$.~First we apply $c^{(n)!}$ to (\ref{evaluation_trace}) to get a map

\begin{equation}\label{two_ways_Frobenius_twist_same_Varshavsky_0}
c^{(n)!}\text{ev}_{\sF} \colon c^{(n)!}(\sG) \to c^{(n)!}\Delta_*K_X.
\end{equation}

Composing (\ref{two_ways_Frobenius_twist_same_Varshavsky_0}) with the map $u^{(n)}$ in (\ref{cohomological_correspondence_twist_1}),~we get a map 

\begin{equation}\label{two_ways_Frobenius_twist_same_Varshavsky_1'}
c^{(n)!}\text{ev}_{\sF} \circ u^{(n)} \colon \bar{\Q}_{\ell} \to c^{(n)!}\Delta_*K_X.
\end{equation}

Base change applied to the right hand square of (\ref{graph_Frobenius_local_terms}) gives us a natural isomorphism

\begin{equation}\label{two_ways_Frobenius_twist_same_Varshavsky_2'}
\text{Hom}(\bar{\Q}_{\ell},~c^{(n)!}\Delta_*K_X) \simeq \text{Hom}(\bar{\Q}_{\ell},~c^{!}\Delta^{(n)}_*K_X).
\end{equation}

Finally note that  (\ref{two_ways_Frobenius_twist_same_Varshavsky_6}) gives an isomorphism of $\text{Hom}(\bar{\Q}_{\ell},~c^{!}\Delta^{(n)}_*K_X)$ with $H^0(\text{Fix}(c^{(n)}),K_{\text{Fix}(c^{(n)}})$. The element in $H^0(\text{Fix}(c^{(n)}),K_{\text{Fix}(c^{(n)}})$ corresponding to (\ref{two_ways_Frobenius_twist_same_Varshavsky_1'}) via the isomorphisms (\ref{two_ways_Frobenius_twist_same_Varshavsky_2'}) and (\ref{two_ways_Frobenius_twist_same_Varshavsky_6}) is by definition $\mathcal{T}\textit{r}_c(u^{(n)})$.~Thus $\mathcal{T}\textit{r}_c(u^{(n)})$ apriori occurs as an element in $\text{Hom}(\bar{\Q}_{\ell},~c^{(n)!}\Delta_*K_X)$ (say $u_{\Delta^{(n)}}$),~which is then considered as an element in $H^0(\text{Fix}(c^{(n)}),K_{\text{Fix}(c^{(n)}})$ via the isomorphisms (\ref{two_ways_Frobenius_twist_same_Varshavsky_2'}) and (\ref{two_ways_Frobenius_twist_same_Varshavsky_6}).

%
%
%
%

On the other hand $\mathcal{T}\textit{r}^{'(n)}_c(u)$ naturally exists as an element in $\text{Hom}(c_!\bar{\Q}_{\ell},\Delta^{(n)}_{*}K_{X})$ (say $u''_{\Delta^{(n)}}$),~which is then considered as an element in $H^0(\text{Fix}(c^{(n)}),K_{\text{Fix}(c^{(n)}})$ via the isomorphisms (\ref{two_ways_Frobenius_twist_same_Varshavsky_5}) and (\ref{two_ways_Frobenius_twist_same_Varshavsky_6}).

To compare these elements we consider the following commutative diagram

\begin{equation}\label{two_ways_Frobenius_twist_same_Varshavsky_2''}
\xymatrix@C=19pt{u_{\Delta^{(n)}} \in \text{Hom}(\bar{\Q}_{\ell},~c^{(n)!}\Delta_*K_X) \ar[d]_-{\psi}^-{\cong (A)} \ar[r]^-{(\ref{two_ways_Frobenius_twist_same_Varshavsky_2'})}_-{\cong (\text{BC})} & \text{Hom}(\bar{\Q}_{\ell},~c^{!}\Delta^{(n)}_*K_X) \ar[d]_-{(\ref{two_ways_Frobenius_twist_same_Varshavsky_5})}^-{\cong (A)} \ar[r]^-{\cong}_-{(\ref{two_ways_Frobenius_twist_same_Varshavsky_6})} & H^0(\text{Fix}(c^{(n)}),K_{\text{Fix}(c^{(n)}}) \\
                 \text{Hom}(c_!\bar{\Q}_{\ell},(F^n_{X/\F_q} \times 1_X)^!\Delta_*K_X) \ar[r]_-{\psi''}^-{\cong (\text{BC})} & \text{Hom}(c_!\bar{\Q}_{\ell},\Delta^{(n)}_*K_X) \ni u''_{\Delta^{(n)}}& }.
  \end{equation}

Our discussion above implies that the image of $u_{\Delta^{(n)}}$ along the top row is $\mathcal{T}\textit{r}_c(u^{(n)})$,~while the image of $u''_{\Delta^{(n)}}$ under the maps (\ref{two_ways_Frobenius_twist_same_Varshavsky_5}) and (\ref{two_ways_Frobenius_twist_same_Varshavsky_6}) in (\ref{two_ways_Frobenius_twist_same_Varshavsky_2''}) is $\mathcal{T}\textit{r}^{''}_c(u^{(n)})$.~Thus it suffices to show $\psi(u_{\Delta^{(n)}})=\psi''^{-1}(u''_{\Delta^{(n)}})$ as elements in $\text{Hom}(c_!\bar{\Q}_{\ell},(F^n_{X/\F_q} \times 1_X)^!\Delta_*K_X)$.

Consider the diagram

\begin{equation}\label{two_ways_Frobenius_twist_same_Varshavsky_7}
\xymatrix@C=50pt{ c_!\bar{\Q}_{\ell} \ar[r]^-{c_!u} \ar@/^2pc/[rr]^-{c_!u^{(n)}} & c_!c^!\sG \ar[d]^-{\alpha} \ar[r]^-{\cong}_-{(c_!c^!F^{-1}_{\text{End}(\sF)})} & c_!c^{(n)!}\sG \ar[d]^-{\beta} \ar[r]^-{c_!(c^{(n)!}\text{ev}_{\sF})}  & c_!c^{(n)!} \Delta_*K_X \ar[d]^-{\gamma} \\
                                                                 & \sG  \ar[r]^-{\cong}_-{(F^{-1}_{\text{End}(\sF)})} & (F^n_{X/\F_q} \times 1_X)^!\sG \ar[r]_-{(F^n_{X/\F_q} \times 1_X)^!\text{ev}_{\sF}} & (F^n_{X/\F_q} \times 1_X)^!\Delta_*K_X \ar[d]^-{(\text{BC})} \\
                                                                 & & & \Delta^{(n)}_*K_X }.
\end{equation}

Here $\alpha$,~$\beta$ and $\gamma$ are induced by adjunction between $(c_!,c^!)$,~$(\text{BC})$ denotes the map induced by base change along the right square in (\ref{graph_Frobenius_local_terms}),~and $F^{-1}_{\text{End}(\sF)}$ is inverse to the isomorphism (\ref{Frobenius_cohomology_class_1}).~By functoriality of adjunction $c_!c^! \to 1$,~the squares commute.

Note that by definition of $u^{(n)}$ (see (\ref{cohomological_correspondence_twist_1})),~the composition $c_!c^!F^{-1}_{\text{End}(\sF)} \circ c_!u$ is $c_!u^{(n)}$.~Thus by definition of the adjunction map $\psi$ in (\ref{two_ways_Frobenius_twist_same_Varshavsky_2''}),

\begin{equation}\label{two_ways_Frobenius_twist_same_Varshavsky_2'''}
\psi(u_{\Delta^{(n)}})=\gamma \circ c_!(c^{(n)!}\text{ev}_{\sF}) \circ c_!u^{(n)}.
\end{equation}

On the other hand by definition the composition $\alpha \circ c_!u$ is the map in (\ref{two_ways_Frobenius_twist_same_Varshavsky_3}),~and the composition $\text{BC} \circ (F^n_{X/\F_q} \times 1_X)^!\text{ev}_{\sF} \circ F^{-1}_{\text{End}(\sF)}$ is the map in (\ref{Frobenius_cohomology_class_3}).~Hence $u''_{\Delta^{(n)}}$ is the composition of these two maps.~Thus we have $\psi''^{-1}(u''_{\Delta^{(n)}})=(F^n_{X/\F_q} \times 1_X)^!\text{ev}_{\sF} \circ F^{-1}_{\text{End}(\sF)} \circ \alpha \circ c_!u$,~which by commutativity of (\ref{two_ways_Frobenius_twist_same_Varshavsky_7}) equals $\gamma \circ c_!(c^{(n)!}\text{ev}_{\sF}) \circ c_!u^{(n)}$,~ and hence equals $\psi(u_{\Delta^{(n)}})$ by (\ref{two_ways_Frobenius_twist_same_Varshavsky_2'''}).

\end{proof}

\subsection{Local terms using the pairing (\ref{main_pairing})}

In this section we shall use the pairing (\ref{main_pairing}) to give a formula for computing the local terms of a cohomological correspondence lifting an essentially proper correspondence.~Through out this section we fix an essentially proper correspondence (defined over $\F_q$),~$c: C \to X \times_k X$ together with a diagram as in (\ref{good_correspondece_definition}).

%
%
%
%
%
%
%
%

Now consider the following natural map
%
%
\begin{equation}\label{global_cohomology_classes}
H^0_{\Delta^{(n)}}(X \times_k X,~\sF \boxtimes \mathbb{D}_X \sF) \simeq H^0_{\Delta^{(n)}}(X \times_k X,~j_!j^*(\sF \boxtimes \mathbb{D}_X \sF)) \to H^0_c(U, j^*(\sF \boxtimes \mathbb{D}_X \sF)),
\end{equation}

\noindent here the first isomorphism follows from Lemma \ref{graph_transpose_contained_in_open} and the isomorphism (\ref{first_specific_isomorphism_supports}),~and the second map is the forget supports map.

\subsubsection{The cohomology classes}\label{The_cohomology_Classes}
Let $[\Delta^{(n)}] \in  H^{0}_{\Delta^{(n)}}(X \times_k X,\sF \boxtimes \mathbb{D}_X \sF)$ be the cohomology class obtained by taking the Verdier dual of (\ref{Frobenius_cohomology_class_3}).~By abuse of notation we shall also denote the image of $[\Delta^{(n)}]$ under the morphism (\ref{global_cohomology_classes}) by $[\Delta^{(n)}]$.~All these cohomology classes now live in the same space $H^0_c(U, j^*(\sF \boxtimes \mathbb{D}_X \sF))$.

Let $u$ be a cohomological correspondence of $\sF$ lifting $c$.~Recall that $u$ induces a linear function $\Phi_u$ on $H^0_c(U, j^*(\sF \boxtimes \mathbb{D}_X \sF))$ (see (\ref{linear_functional})).~Moreover by Lemma \ref{graph_transpose_contained_in_open} since $\text{Fix}(c^{(n)})$ is proper we can make sense of the local terms $\text{LT}(u^{(n)})$ (see Definition \ref{Local_term}).

\begin{prop}\label{local_terms_using_functional}
For any $n \geq 0$,~$\Phi_u([\Delta^{(n)}])=\text{LT}(u^{(n)})$.
\end{prop}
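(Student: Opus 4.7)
The plan is to express \(\Phi_u([\Delta^{(n)}])\) through the pairing with supports constructed in Section~\ref{trace_maps_with_support}, identify the resulting class with the trace map \(\mathcal{T}\textit{r}^{'(n)}_c(u)\), and then invoke Corollary~\ref{trace_map_for_twist} to reduce to the standard local term.

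First, unwind the definition. Since \([\Delta^{(n)}]\in H^0_c(U,j^*(\sF\boxtimes\mathbb{D}_X\sF))\) is, by construction, the image of its avatar in \(H^0_{\Delta^{(n)}}(X\times_k X,\sF\boxtimes\mathbb{D}_X\sF)\) under the forget-supports map~(\ref{first_forget_support}), the commutativity of Diagram~(\ref{compatibility_pairing_with_without_suppports}) with \(Y=\Delta^{(n)}\) gives
\[
\Phi([\Delta^{(n)}]\otimes u)=\mathrm{forget}_C\bigl(\Phi_{\Delta^{(n)}}([\Delta^{(n)}]\otimes u)\bigr),
\]
where \(\mathrm{forget}_C\colon H^0_{\Delta^{(n)}}(X\times_k X,c_!K_C)\to H^0_c(C,K_C)\) is the composition of~(\ref{second_forget_supports}) with the natural identification \(H^0(X\times_k X,c_!K_C)\simeq H^0_c(C,K_C)\), which uses properness of \(X\).

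Next, by the very construction of \(\mathcal{T}\textit{r}^{'(n)}_c\) (using Corollary~\ref{two_natural_paths_agree} to choose the dual path) the chain of isomorphisms~(\ref{first_support_along_Frobenius}), (\ref{another_way_to_get_trace_1}), (\ref{another_way_to_get_trace_2}), (\ref{another_way_to_get_trace_3}) sends \(\Phi_{\Delta^{(n)}}([\Delta^{(n)}]\otimes u)\) to \(\mathcal{T}\textit{r}^{'(n)}_c(u)\); by Corollary~\ref{trace_map_for_twist} this in turn equals \(\mathcal{T}\textit{r}_c(u^{(n)})\). The conclusion therefore reduces to checking the commutativity of the square
\[
\xymatrix@C=38pt{
H^0_{\Delta^{(n)}}(X\times_k X,c_!K_C)\ar[r]^-{\mathrm{forget}_C}\ar[d]_-{\simeq} & H^0_c(C,K_C)\ar[d]^-{\mathrm{Tr}_C}\\
H^0(\text{Fix}(c^{(n)}),K_{\text{Fix}(c^{(n)})})\ar[r]^-{\mathrm{Tr}_{\text{Fix}(c^{(n)})}} & \bar{\Q}_\ell,
}
\]
where the left vertical arrow is the composition above.

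The commutativity of this square is a general compatibility between the \((f_!,f^!)\)-adjunctions, proper base change along the Cartesian diagram~(\ref{fixed_point_for_support_Frobenius}), and the canonical adjunction \((\pi_Y)_!K_Y\to\bar{\Q}_\ell\): both paths produce the same map to \(\bar{\Q}_\ell\) because they are built by repeatedly applying these natural transformations and can be matched arrow-by-arrow by a diagram chase in the spirit of Lemma~\ref{natural_commutative_Hom}. This is the principal technical obstacle of the proof; conceptually transparent as a formal compatibility, it nevertheless requires unwinding every adjunction and base-change used to define the left-hand column and matching them with the analogous chain that computes \(\mathrm{Tr}_C\circ\mathrm{forget}_C\), very much in the style of the proofs of Lemmas~\ref{two_definitions_trace_same} and~\ref{two_ways_Frobenius_twist_same_Varshavsky}. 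Once the square commutes, combining the three steps yields \(\Phi_u([\Delta^{(n)}])=\mathrm{Tr}_{\text{Fix}(c^{(n)})}(\mathcal{T}\textit{r}_c(u^{(n)}))=\text{LT}(u^{(n)})\).
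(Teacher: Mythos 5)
Your strategy matches the paper's exactly: reduce via the pairing with supports, identify $\Phi_{\Delta^{(n)}}([\Delta^{(n)}]\otimes u)$ with $\mathcal{T}\textit{r}^{'(n)}_c(u)$ via Corollary~\ref{two_natural_paths_agree}, invoke Corollary~\ref{trace_map_for_twist}, and reduce to a commutativity statement. All the reductions up to and including the final square are correct. However, the proof of that square's commutativity is precisely where the content of the proposition lies, and you have not proved it---you have asserted it as a ``formal compatibility'' that ``can be matched arrow-by-arrow.''

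That assertion understates the difficulty. The paper does not prove your square by a direct chase. It first splits off the identity $\text{Tr}_{\text{Fix}(c^{(n)})}=\text{Tr}_C\circ\psi^{(n)}$ (compatibility of trace maps with the adjunction $\psi^{(n)}$ of~(\ref{adjunction_fixed_canonical}), which is immediate from the definitions), thereby reducing your square to the identity $\text{forget}_C=\psi^{(n)}\circ(\text{chain of isomorphisms})$, i.e.\ the commutativity of diagram~(\ref{local_terms_using_functional_1}). The key point one cannot see by ``matching arrows'' is that the forget-supports map on the top of your square is \emph{not} an isomorphism, so one cannot simply invert it and compare; instead the paper uses the left column (which \emph{is} an isomorphism chain), starts with $\gamma\in H^0(\text{Fix}(c^{(n)}),K_{\text{Fix}(c^{(n)})})$, produces $\gamma_*\colon\Delta^{(n)}_*\bar{\Q}_\ell\to c_*K_C$, and crucially exploits the \emph{uniqueness} of the lift $\gamma_!\colon\Delta^{(n)}_*\bar{\Q}_\ell\to c_!K_C$ guaranteed by $\text{Hom}(\Delta^{(n)}_*-,i_*-)\simeq 0$. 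The explicit lift is then constructed via the commutative diagram~(\ref{local_terms_using_functional_6}), whose nontrivial inner square~(\ref{local_terms_using_functional_6'}) is verified separately in Lemma~\ref{local_terms_lemma_functional} by an adjunction-and-base-change chase. Naming this uniqueness-of-lift mechanism and carrying out the verification of~(\ref{local_terms_using_functional_6'}) is the missing substance of your proof; until you supply it, the argument has a genuine gap.
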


\begin{rmk}
For $n \geq 1$,~both $u^{(n)}$ and the cohomology classes $[\Delta^{(n)}]$ depend on the choice of the Weil structure on $\sF$.
\end{rmk}

\begin{proof}

Recall that (see (\ref{linear_functional}) and (\ref{local_term_proper})),

\begin{equation}\label{definition_global_trace}
\Phi_u([\Delta^{(n)}])=\text{Tr}_{X \times_k X}(\Phi(u \otimes [\Delta^{(n)}])),
\end{equation}

\noindent and 

\begin{equation}
\text{LT}(u^{(n)})=\text{Tr}_{\text{Fix}(c^{(n)})}(\mathcal{T}\textit{r}_c(u^{(n)})).
\end{equation}

Moreover Lemma \ref{two_ways_Frobenius_twist_same_Varshavsky} implies that it suffices to show 

\begin{equation}\label{equivalent_equality_to_be_shown}
\text{Tr}_{X \times_k X}(\Phi(u \otimes [\Delta^{(n)}]))=\text{Tr}_{\text{Fix}(c^{(n)})}(\mathcal{T}\textit{r}^{'(n)}_c(u)).
\end{equation}

The trace maps $\text{Tr}_{X \times_k X}$ and $\text{Tr}_{\text{Fix}(c^{(n)})}$ by their definitions are compatible with the natural push forward map

\begin{equation}\label{adjunction_fixed_canonical}
\psi^{(n)}:H^0(\text{Fix}(c^{(n)}),K_{\text{Fix}(c^{(n)})}) \to H^0(X \times_k X,~K_{X \times_k X}),
\end{equation}

Thus it suffices to show that the image of $\mathcal{T}\textit{r}^{'(n)}_c(u) \in H^0(\text{Fix}(c^{(n)}),K_{\text{Fix}(c^{(n)})})$ under (\ref{adjunction_fixed_canonical}) is $\Phi(u \otimes [\Delta^{(n)}])$.

Unwinding the definitions of $\Phi$ and $\mathcal{T}\textit{r}^{'(n)}_c$,~one observes that to prove the Proposition it suffices to show that the following diagram is commutative

\begin{equation}\label{local_terms_using_functional_1}
\xymatrix{\text{Hom}(c_*\bar{\Q}_{\ell},K_{X \times_k X}) \ar[d]_{(3)} & \text{Hom}(c_*\bar{\Q}_{\ell},\Delta^{(n)}_*K_X) \ar[d]^-{(1)}_-{\cong} \ar[l]^-{(2)} \\
               \text{Hom}(\bar{\Q}_{\ell},K_{X \times_k X}) \ar[d]_-{\cong}  & \text{Hom}(c_!\bar{\Q}_{\ell},\Delta^{(n)}_*K_X) \ar[d]^-{(A)}_-{\cong}\\
              H^0(X \times_k X,K_{X \times_k X})      & \text{Hom}(\bar{\Q}_{\ell},c^!\Delta^{(n)}_*K_X) \ar[d]^-{(BC)}_-{\cong}\\
                H^0(\text{Fix}(c^{(n)}),K_{\text{Fix}(c^{(n)})}) \ar[u]^{(\ref{adjunction_fixed_canonical})}  &  \text{Hom}(\bar{\Q}_{\ell},\Delta^{(n)'}_*K_{\text{Fix}(c^{(n)})}) \ar[l]^-{\cong}  }.
\end{equation}       

Here the maps $(2)$ and $(3)$ are induced by natural maps $\Delta^{(n)}_*K_X \to K_{X \times_k X}$ and $\bar{\Q}_{\ell} \to c_* \bar{\Q}_{\ell}$ respectively.~The map in $(1)$ is induced by the natural map $c_! \bar{\Q}_{\ell} \to c_* \bar{\Q}_{\ell}$.~$(1)$ is an isomorphism follows from applying the cohomological functor $\text{Hom}(\hspace{1mm},\Delta^{(n)}_*K_{X})$ to the triangle (\ref{pushoforward_canonical_exact_above}) and noting that $\text{Hom}(i_*\hspace{1mm},\Delta^{(n)}_*)$ vanishes,~thanks to Lemma \ref{graph_transpose_contained_in_open}.

Our strategy is simple.~We begin with an element $\gamma \in H^0(\text{Fix}(c^{(n)}),K_{\text{Fix}(c^{(n)})})$,~then using the isomorphisms  above we construct a map $\gamma_!:c_!\bar{\Q}_{\ell} \to \Delta^{(n)}_*K_X$,~and its unique lift $\gamma_*:c_*\bar{\Q}_{\ell} \to \Delta^{(n)}_*K_X$.~Finally we show that $\psi^{(n)}(\gamma) \in H^0(X \times_k X ,K_{X \times_k X})$, corresponds to the composition 

\begin{equation}
\xymatrix{\bar{\Q}_{\ell} \ar[r] & c_*\bar{\Q}_{\ell} \ar[r]^-{\gamma_*} & \Delta^{(n)}_*K_X \ar[r] & K_{X \times_k X}},
 \end{equation}                                                                    

\noindent and thus establishing the commutativity of (\ref{local_terms_using_functional_1}).

We have a Cartesian diagram

\begin{equation}\label{local_terms_using_functional_2}
\xymatrixcolsep{4pc} \xymatrix{ \text{Fix}(c^{(n)}) \ar[r]^{\tilde{c}^{(n)'}}  \ar@{^{(}->}[d]^{\Delta^{(n)'}} & X  \ar@{^{(}->}[d]^{\Delta^{(n)}} \\
                   C \ar[r]^{c}                    & X \times X  }.
\end{equation}

Since $\text{Fix}(c^{(n)})$ is proper over $k$ (Lemma \ref{neighbourhood_fixed_points}),~the natural maps

\begin{equation}\label{local_term_using_functional_2''}
c_!\Delta^{(n)'}_* \to c_* \Delta^{(n)'}_*
\end{equation}

\noindent and

\begin{equation}\label{local_term_using_functional_2'''}
\Delta^{(n)}_*\tilde{c}^{(n)'}_! \to \Delta^{(n)}_* \tilde{c}^{(n)'}_*
\end{equation}

are isomorphisms.~Moreover commutativity of the Digram (\ref{local_terms_using_functional_2}) implies that the maps 

\begin{equation}\label{local_term_using_functional_2''''}
c_!\Delta^{(n)'}_* \to  \Delta^{(n)}_* \tilde{c}^{(n)'}_!
\end{equation}

\noindent and

\begin{equation}\label{local_term_using_functional_2'''''}
 c_* \Delta^{(n)'}_* \to   \Delta^{(n)}_* \tilde{c}^{(n)'}_*
\end{equation}

are isomorphisms.

We have a natural adjunction map

\begin{equation}\label{local_terms_using_functional_3_1}
\alpha_n: \tilde{c}_{!}^{(n)}K_{\text{Fix}(c^{(n)})} \to K_X.
\end{equation}

Now suppose we have a global section of $K_{\text{Fix}(c^{(n)})}$ (say $\gamma$).~
Note that the the isomorphisms on the right hand side of (\ref{local_terms_using_functional_1}) imply that the element in $\text{Hom}(c_!\bar{\Q}_{\ell},~\Delta^{(n)}_*K_{X})$ (say $\gamma_!$) corresponding to $\gamma$ is obtained by composing 

\begin{equation}\label{local_terms_using_functional_7}
\gamma_! \colon \xymatrix{c_!\bar{\Q}_{\ell}  \ar[r]^-{c_!\gamma} & c_!\Delta^{(n)'}_*K_{\text{Fix}(c^{(n)})} \ar[r]^-{(\ref{local_term_using_functional_2''''})}_-{\cong} &  \Delta^{(n)}_*\tilde{c}^{(n)'}_!K_{\text{Fix}(c^{(n)})} \ar[r]^-{\Delta^{(n)}_*\alpha_n} & \Delta^{(n)}_*K_{X}} .
\end{equation}

But the isomorphism $(1)$ in Diagram \ref{local_terms_using_functional_1} (yet to be shown commutative) implies any such $\gamma_!$ has an unique lift (say $\gamma_*$) into an element in $\text{Hom}(c_*\bar{\Q}_{\ell},~\Delta^{(n)}_*K_X)$. The commutative diagram (\ref{local_terms_using_functional_8}) allows us to construct a (and hence the only) lift of $\gamma_!$.~Indeed given any such $\gamma$ we have a diagram

\begin{equation}\label{local_terms_using_functional_8}
\xymatrix{& c_!\bar{\Q}_{\ell} \ar[d] \ar[r]^-{c_!\gamma} & c_!\Delta^{(n)'}_*K_{\text{Fix}(c^{(n)})} \ar[r]^-{(\ref{local_term_using_functional_2''''})}_-{\cong} \ar[d]_{(\ref{local_term_using_functional_2''})}^-{\cong} &  \Delta^{(n)}_*\tilde{c}^{(n)'}_!K_{\text{Fix}(c^{(n)})} \ar[r]^-{\Delta^{(n)}_*\alpha_n} \ar[d]_{(\ref{local_term_using_functional_2'''})}^-{\cong} & \Delta^{(n)}_*K_{X} \ar[r] & K_{X \times_k X} \\
               \bar{\Q}_{\ell} \ar[r] & c_* \bar{\Q}_{\ell} \ar[r]^-{c_*\gamma} & c_*\Delta^{(n)'}_*K_{\text{Fix}(c^{(n)})} \ar[r]^-{(\ref{local_term_using_functional_2'''''})}_-{\cong} &  \Delta^{(n)}_*\tilde{c}^{(n)'}_*K_{\text{Fix}(c^{(n)})}  & & }.
\end{equation}

The left hand square in (\ref{local_terms_using_functional_8}) is commutative by functoriality of the map $c_! \to c_*$,~and that the right-hand square is commutative follows from the commutativity of the Diagram (\ref{local_terms_using_functional_2}).

The composite arrow from $c_*\bar{\Q}_{\ell}$ to  $\Delta^{(n)}_*K_X$ in (\ref{local_terms_using_functional_8}) is the unique lift $\gamma_*$ of $\gamma_!$.~Since applying global sections functor to the map $c_*\gamma$ in (\ref{local_terms_using_functional_8}) gives the element $\gamma$ in $H^0(\text{Fix}(c^{(n)},~K_{\text{Fix}(c^{(n)})}))$, the element $\psi^{(n)}(\gamma)$ in $H^0(X \times_k X,K_{X \times_k X})$ corresponds to the composition  

\begin{equation}
\xymatrix{\bar{\Q}_{\ell} \ar[r] & c_* \bar{\Q}_{\ell} \ar[r]^-{\gamma_*} & \Delta^{(n)}_*K_X}
 \end{equation}    

\noindent as desired.

\end{proof}

 \subsection{Action of partial Frobenius on the pairing}

Recall that we were interested in understanding the local terms $\text{LT}(u^{(n)})$ of a cohomological correspondence of a mixed sheaf,~lifting a correspondence which is essentially proper over $\F_q$.

So far from the geometric side, we have only used the properness of $X$ (and $\text{Fix}(c^{(n)})$),~and the fact that there exists an open $U \hookrightarrow X \times_k X$ containing all the graphs of Frobenius.~Also we have only required a Weil structure on $\sF$ so far.~To use Proposition \ref{local_terms_using_functional} to bound the local terms we will need further information on the geometry of $U$,~and the Weil structure on $\sF$.

Now we use the fact that $U$ is \textit{stable under the partial Frobenius} $F_l \colonequals F_{X/\F_q} \times 1_X $,~that is $F_l^{-1}(U)=U$.~As before $F_l^0$ is to be understood as the identity morphism of $X \times_k X$.

More generally let $\sP$ be the set whose elements are open subsets of $X \times_k X$ of the form $\cup_i U_i \times_k U_i$,~with $U_i$ open in $X$ and defined over $\F_q$.~We do not require that $\{U_i\}$'s cover $X$.~Clearly $\sP$ is stable under finite unions and intersections.

Let $V$ be an arbitrary element in $\sP$.~Then $V$ is also stable under the partial Frobenius,~and for any integer $n \geq 0$ we have a Cartesian diagram

\begin{equation}\label{Cartesian_partial_Frobenius}
\xymatrix{V \ar@{^{(}->}[r]^-{j_V} \ar[d]_-{F^n_l} & X \times_k X \ar[d]^-{F^n_l} \\
                V \ar@{^{(}->}[r]^-{j_V} & X \times_k X}.
\end{equation}

Here $j_V$ is the open immersion of $V$ into $X \times_k X$.

As before let $F_{\sF}$ be the structure of a Weil sheaf on $\sF$ via (\ref{Weil_Sheaf}).~The diagram (\ref{Cartesian_partial_Frobenius}) and functoriality of the adjunction $j_{V!}j_{V}^* \to 1$,~implies that there exists a commutative diagram,

\begin{equation}\label{Action_cohomology_partial_Frobenius}
\xymatrix{ F_l^{n*}j_{V!}j_V^*(\sF \boxtimes \mathbb{D}_X \sF)  \ar[r]^-{(\text{BC})}_-{\cong} \ar[d] & j_{V!}j_{V}^*F_l^{n*}(\sF \boxtimes \mathbb{D}_X \sF) \ar[r]^-{\cong}_-{(1)} \ar[d] & j_{V!}j_{V}^*(\sF \boxtimes \mathbb{D}_X \sF) \ar[d]  \\
                 F_l^{n*}(\sF \boxtimes \mathbb{D}_X \sF) \ar@{=}[r] & F_l^{n*}(\sF \boxtimes \mathbb{D}_X \sF)  \ar[r]^-{\cong}_-{(2)} & \sF \boxtimes \mathbb{D}_X \sF}
\end{equation}

\noindent where the isomorphisms $(1)$ and $(2)$ are induced by the Weil structure of $\sF$,~and $(\text{BC})$ arises from base change.

Since $F^n_l$ is finite (and hence proper) taking global sections along the top row of (\ref{Action_cohomology_partial_Frobenius}) we have an induced action of $F^n_l$ on compactly supported cohomology

\begin{equation}\label{pullback_cohomology}
F_{l}^{n*} \colon H^i_c(V,j_V^*(\sF \boxtimes \mathbb{D}_X \sF)) \to H^i_c(V,j_V^*(\sF \boxtimes \mathbb{D}_X \sF)).
\end{equation}

In particular since $U \in \sP$,~for any integer $n \geq 0$ we have an action of $F_l^{n*}$ on $H^0_c(U,j^*(\sF \boxtimes \mathbb{D}_X \sF))$.

For any sheaf $\sH$ on $X \times_k X$,~and any closed subset $Y \hookrightarrow X \times_k X$ there is a pullback map

\begin{equation}\label{partial_Frobenius_pullback_cohomology_with_supports}
H^0_{Y}(X \times_k X,\sH) \to H^0_{(F_l^{n})^{-1}(Y)} (X \times_k X,F_l^{n*}\sH),
\end{equation}

\noindent which fits into a commutative diagram

\begin{equation}\label{partial_Frobenius_pullback_cohomology_with_supports_1}
\xymatrix{H^0_{Y}(X \times_k X,\sH) \ar[r]^-{(\ref{partial_Frobenius_pullback_cohomology_with_supports})}  \ar[d] & H^0_{(F_l^{n})^{-1}(Y)} (X \times_k X,F_l^{n*}\sH) \ar[d] \\
                H^0(X \times_k X, \sH) \ar[r] & H^0(X \times_k X, F^{n*}_l\sH) }.
\end{equation}

The vertical arrows in (\ref{partial_Frobenius_pullback_cohomology_with_supports_1}) are induced by the forget supports map,~and the arrow in the bottom row is induced by pullback.

Recall that given a structure of a Weil sheaf on $\sF$ we had defined cohomology classes $[\Delta^{(n)}]$ in $H^0_c(U,j^*(\sF \boxtimes \mathbb{D}_X \sF))$ (see (\ref{global_cohomology_classes})).

\begin{lem}\label{Frobenius_pullback_cohomology_classes}
For any $n \geq 0$,~$(F_l^{*})^n([\Delta])=[\Delta^{(n)}]$.
\end{lem}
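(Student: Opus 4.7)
The plan is to verify the identity by reducing to cohomology with supports, where both classes arise from explicit morphisms whose comparison reduces to a diagram chase involving Verdier duality and the Weil structure.

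First I would reduce to a statement in cohomology with supports. The class $[\Delta^{(n)}]$ is defined (via \eqref{cohomology_class_Frobenius}) as an element of $H^{0}_{\Delta^{(n)}}(X\times_k X,\sF\boxtimes\mathbb{D}_X\sF)$, and its image in $H^{0}_c(U,j^*(\sF\boxtimes\mathbb{D}_X\sF))$ is taken via the forget-supports map (\ref{global_cohomology_classes}). Since $(F_l^n)^{-1}(\Delta)=\Delta^{(n)}$, the commutative diagram \eqref{partial_Frobenius_pullback_cohomology_with_supports_1}, applied to $Y=\Delta$ and $\sH=\sF\boxtimes\mathbb{D}_X\sF$, shows that it suffices to prove
\begin{equation*}
(F_l^{n*})([\Delta])=[\Delta^{(n)}]\quad\text{in}\quad H^{0}_{\Delta^{(n)}}(X\times_k X,\sF\boxtimes\mathbb{D}_X\sF),
\end{equation*}
where $F_l^{n*}$ here is the pullback on cohomology with supports (\ref{partial_Frobenius_pullback_cohomology_with_supports}) followed by the identification $F_l^{n*}(\sF\boxtimes\mathbb{D}_X\sF)\simeq \sF\boxtimes\mathbb{D}_X\sF$ induced by iterating the Weil structure \eqref{Weil_Sheaf}.

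Next, via the natural isomorphism \eqref{cohomology_with_supports_using_hom}, both sides correspond to morphisms $\Delta^{(n)}_*\bar{\Q}_{\ell}\to \sF\boxtimes\mathbb{D}_X\sF$, and I would describe each explicitly. On the one hand, $[\Delta^{(n)}]$ is by construction the composite $(\mathrm{ev}^{(n)}_{\sF})^{\vee}$ of \eqref{cohomology_class_Frobenius}, i.e.\ the Verdier dual of the morphism $\mathrm{ev}^{(n)}_{\sF}\colon \mathbb{D}_X\sF\boxtimes\sF\to\Delta^{(n)}_*K_X$ built from $\mathrm{ev}_{\sF}$ by applying $(F^n_{X/\F_q}\times 1_X)^{!}$, then using the dual Weil structure \eqref{dual_Weil_structure} on the source via \eqref{Frobenius_cohomology_class_1} and base change on the target along the Cartesian square
\begin{equation*}
\xymatrix{X \ar[r]^-{F^n_{X/\F_q}}\ar[d]_-{\Delta^{(n)}} & X \ar[d]^-{\Delta}\\
X\times_k X \ar[r]^-{F^n_{X/\F_q}\times 1_X} & X\times_k X.}
\end{equation*}
On the other hand, $F_l^{n*}[\Delta]$ is obtained by applying $F_l^{n*}$ to $(\mathrm{ev}_{\sF})^{\vee}\colon\Delta_*\bar{\Q}_{\ell}\to \sF\boxtimes\mathbb{D}_X\sF$, base-changing $F_l^{n*}\Delta_*\bar{\Q}_{\ell}\simeq \Delta^{(n)}_*\bar{\Q}_{\ell}$ along the same Cartesian square, and using the iterated Weil structure on $\sF$ to identify $F_l^{n*}(\sF\boxtimes\mathbb{D}_X\sF)\simeq \sF\boxtimes\mathbb{D}_X\sF$.

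I would then establish the equality by Verdier-dualizing both morphisms and observing that each dual becomes an instance of the same construction applied to $\mathrm{ev}_{\sF}$. The key formal input is that for the morphism $F_l^n$, Verdier duality exchanges $F_l^{n*}$ and $F_l^{n!}$ on morphisms (so that $(F_l^{n*}\alpha)^{\vee}$ is identified with $F_l^{n!}\alpha^{\vee}$), and that base change along the Cartesian square above is compatible with Verdier duality (\cite{Illusie},~(1.7.3)). Applying these compatibilities, the dual of $F_l^{n*}[\Delta]$ is precisely the morphism obtained by applying $(F_l^{n})^{!}$ to $\mathrm{ev}_{\sF}$ and performing the same source and target identifications that enter the definition of $\mathrm{ev}^{(n)}_{\sF}$; hence it coincides with $[\Delta^{(n)}]^{\vee}$.

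The main obstacle is the bookkeeping in this final step: one must verify that the two systems of identifications --- the Weil structure on $\sF$ used to identify $F_l^{n*}(\sF\boxtimes\mathbb{D}_X\sF)\simeq \sF\boxtimes\mathbb{D}_X\sF$ in $F_l^{n*}[\Delta]$, and the dual Weil structure $F_{\sF}^{\vee}$ used in \eqref{Frobenius_cohomology_class_1} to define $\mathrm{ev}^{(n)}_{\sF}$ --- are genuinely Verdier dual to each other, and that the two base-change isomorphisms (for $\Delta_*\bar{\Q}_{\ell}$ on one side and for $\Delta_*K_X$ on the other) are interchanged under duality. These are all standard compatibilities in Grothendieck's six-functor formalism, but writing down a single commutative diagram expressing $(F_l^{n*}[\Delta])^{\vee}=\mathrm{ev}^{(n)}_{\sF}$, assembled from the naturality of $F_l^{n*}\dashv F_l^{n!}$ and the compatibility of base change with Verdier duality on the Cartesian square above, is the substantive work.
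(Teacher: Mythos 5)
Your proof is correct and follows essentially the same route as the paper: reduce to the comparison in cohomology with supports $H^0_{\Delta^{(n)}}(X\times_k X,\sF\boxtimes\mathbb{D}_X\sF)$, and then identify the two morphisms $\Delta^{(n)}_*\bar{\Q}_{\ell}\to\sF\boxtimes\mathbb{D}_X\sF$ by unwinding the Verdier-dual constructions. One remark on each half. For the reduction, citing (\ref{partial_Frobenius_pullback_cohomology_with_supports_1}) with $\sH=\sF\boxtimes\mathbb{D}_X\sF$ alone is not quite enough: the forget-supports map in (\ref{global_cohomology_classes}) targets $H^0_c(U,j^*(\sF\boxtimes\mathbb{D}_X\sF))\simeq H^0(X\times_k X,j_!j^*(\sF\boxtimes\mathbb{D}_X\sF))$ rather than $H^0(X\times_k X,\sF\boxtimes\mathbb{D}_X\sF)$, and the $F_l^*$-action on the former is the one built from (\ref{Action_cohomology_partial_Frobenius}); the paper's three-by-three diagram (\ref{Frobenius_pullback_cohomology_classes_1}), whose middle column runs through $j_!j^*(\sF\boxtimes\mathbb{D}_X\sF)$, is precisely what reconciles the pullback on supports with that action. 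This is a minor bookkeeping gap and is easily repaired along the paper's lines. For the final identification, you are doing more than the paper, which simply declares that step to be immediate from the definition of $[\Delta^{(n)}]$. Your unwinding --- dualizing both sides, using $\mathbb{D}\circ(F_l^n)^!\simeq(F_l^n)^*\circ\mathbb{D}$, the compatibility of the base-change isomorphism along the square defining $\Delta^{(n)}$ with Verdier duality, and the observation that the Weil identification $F_l^{n*}(\sF\boxtimes\mathbb{D}_X\sF)\simeq\sF\boxtimes\mathbb{D}_X\sF$ is Verdier dual to the isomorphism (\ref{Frobenius_cohomology_class_1}) entering $\text{ev}^{(n)}_{\sF}$ --- is exactly the content hidden behind the paper's claim of immediacy, and your reasoning there is correct.
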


\begin{proof}

First note that as closed subschemes of $X \times_k X$,~$(F_{l}^{n})^{-1}(\Delta) = \Delta^{(n)}$.~Since $F_l^{n*}=(F_l^*)^n$ as endomorphisms of $H^0_c(U,j^*(\sF \boxtimes \mathbb{D}_X \sF))$,~it suffices to prove $F^{n*}_l([\Delta])=[\Delta^{(n)}]$.~For ease of writing let $\sG \colonequals \sF \boxtimes \mathbb{D}_X \sF$.~Consider the diagram of cohomology groups

\begin{equation}\label{Frobenius_pullback_cohomology_classes_1}
\xymatrix{H^0_{\Delta^{(n)}}(X \times_k X,\sG)  \ar[r]^{(\ref{first_specific_isomorphism_supports})}_{\cong} & H^0_{\Delta^{(n)}}(X \times_k X,j_!j^*\sG) \ar[r]^-{} & H^0_c(U,j^*\sG)\\
               H^0_{\Delta^{(n)}}(X \times_k X,F_l^{n*}\sG) \ar[u]^-{\cong} \ar[r]^{(\ref{first_specific_isomorphism_supports})}_{\cong} & H^0_{\Delta^{(n)}}(X \times_k X,F_l^{n*}j_!j^*\sG) \ar[r] \ar[u]^-{\cong} & H^0_c(U,F_l^{n*}j^*\sG) \ar[u]^-{\cong} \\
               H^0_{\Delta}(X \times_k X,\sG) \ar[u]^-{(\ref{partial_Frobenius_pullback_cohomology_with_supports})} \ar[r]^{(\ref{first_specific_isomorphism_supports})}_{\cong} & H^0_{\Delta}(X \times_k X,j_!j^*\sG) \ar[u]^-{(\ref{partial_Frobenius_pullback_cohomology_with_supports})} \ar[r]^-{} & H^0_c(U,j^*\sG) \ar[u] \ar@/_5pc/[uu]_-{F^{n*}_l}}.
\end{equation}

The isomorphisms along the vertical arrows of (\ref{Frobenius_pullback_cohomology_classes_1}) are induced by the Weil structure (see (\ref{Action_cohomology_partial_Frobenius})).~The arrow from $H^0_c(U,j^*\sG)$ to $H^0_c(U,F_l^{n*}j^*\sG)$ is induced by pullback,~and the arrow from $H^0_{\Delta^{(n)}}(X \times_k X,F_l^{n*}j_!j^*\sG)$ to $ H^0_c(U,F_l^{n*}j^*\sG)$ is the composition of base change and forget supports map.

That the square on the upper-left corner of Diagram \ref{Frobenius_pullback_cohomology_classes_1} commutes is a consequence of the outer commutative square in (\ref{Action_cohomology_partial_Frobenius}).~The square in the upper-right corner of (\ref{Frobenius_pullback_cohomology_classes_1}) commutes by functoriality of the forget supports map applied to $F_l^{n*}j_!j^*\sG \to j_!j^*\sG$.~The square in the bottom-left corner commutes by the definition of pullback map (\ref{partial_Frobenius_pullback_cohomology_with_supports}).~The square in the bottom-right corner commutes as a consequence of the commutative diagram (\ref{partial_Frobenius_pullback_cohomology_with_supports_1}).~Thus the Diagram \ref{Frobenius_pullback_cohomology_classes_1} is commutative.

Recall that the cohomology classes $[\Delta^{(n)}]$ begin their life in $H^0_{\Delta^{(n)}}(X \times_k X,\sG)$.~Thus to prove the Lemma it suffices to show that image of the class $[\Delta] \in  H^0_{\Delta}(X \times_k X,\sG)$ along the left column is $[\Delta^{(n)}] \in H^0_{\Delta^{(n)}}(X \times_k X,\sG)$.~This is immediate from the definition of $[\Delta^{(n)}]$ (see (\ref{The_cohomology_Classes})).
\end{proof}

%

Now suppose $\sF$ arises from a mixed sheaf $\sF_0$ on $X_0$ (the chosen model of $X$) of \textit{weight less than or equal to} $w$.~Further assume that $\sF_0$ is in $^pD^{\leq a}(X_0,~\bar{\Q}_{\ell})$.~Thus $\sF$ comes equipped with a canonical structure of a Weil sheaf.

\begin{lem}\label{partial_weights_Frobenius}
For any $V \in \sP$ and any integer $n$,~the eigenvalues of $F_l^*$ acting on $H^{n}_c(V,j_V^*(\sF \boxtimes \mathbb{D}_X \sF))$ are Weil numbers of weight less than or equal to $w+a+\text{dim}(X)$.
\end{lem}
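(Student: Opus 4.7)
The plan is to combine a Mayer-Vietoris argument on the cover $V = \bigcup_i (U_i\times_k U_i)$ with the K\"unneth formula and Deligne's weight bound, exploiting the crucial feature that the partial Frobenius acts non-trivially on only one of the two factors.

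First, since $X$ is noetherian I may assume the union is finite, say $V = \bigcup_{i=1}^{r} V_i$ with $V_i = U_i \times_k U_i$. For any nonempty $S \subseteq \{1,\dots,r\}$ the intersection $V_S \colonequals \bigcap_{i\in S} V_i = U_S \times_k U_S$, where $U_S \colonequals \bigcap_{i\in S} U_i$, is again open, defined over $\F_q$, and stable under $F_l = F_{X/\F_q}\times 1_X$. The Mayer-Vietoris/\v Cech spectral sequence
\[
E_1^{p,q} = \bigoplus_{|S| = p+1} H^q_c\bigl(V_S,\, j_{V_S}^*(\sF\boxtimes\mathbb{D}_X\sF)\bigr) \;\Longrightarrow\; H^{p+q}_c\bigl(V,\,j_V^*(\sF\boxtimes\mathbb{D}_X\sF)\bigr)
\]
is $F_l^*$-equivariant, so it suffices to bound the weights on each $E_1^{p,q}$-term.

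Next I would compute each term by K\"unneth. Since $U_S \hookrightarrow X$ is an open immersion, $(\mathbb{D}_X\sF)|_{U_S} \simeq \mathbb{D}_{U_S}(\sF|_{U_S})$, and
\[
H^q_c\bigl(V_S,\, \sF|_{U_S}\boxtimes\mathbb{D}_{U_S}(\sF|_{U_S})\bigr) \;\simeq\; \bigoplus_{r+s=q} H^r_c(U_S,\sF|_{U_S}) \otimes_{\bar{\Q}_\ell} H^s_c\bigl(U_S,\mathbb{D}_{U_S}(\sF|_{U_S})\bigr).
\]
Because $\text{pr}_2\circ F_l = \text{pr}_2$ while $\text{pr}_1\circ F_l = F_{X/\F_q}\circ\text{pr}_1$, the partial Frobenius $F_l^*$ acts on this decomposition as $F_{U_S/\F_q}^{*} \otimes \mathrm{id}$, with the Frobenius structure on the first factor inherited from the given Weil structure on $\sF$. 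Hence the eigenvalues of $F_l^*$ on each K\"unneth summand are precisely those of $F_{U_S/\F_q}^{*}$ on $H^r_c(U_S,\sF|_{U_S})$.

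Finally, I would invoke two standard results. By Deligne's weight formalism (\cite{BBD}, 5.1.14), the eigenvalues of $F_{U_S/\F_q}^{*}$ on $H^r_c(U_S,\sF|_{U_S})$ are Weil numbers of weight $\le w+r$. To bound $r$, I observe that the open restriction $j_S^*$ is t-exact for the perverse t-structure, so $\sF|_{U_S}\in {}^pD^{\le a}(U_{S,0})$; using the perverse truncation triangles together with the fact that for a perverse sheaf $P$ on a variety of dimension $d'$ one has $H^i_c(P)=0$ for $i>d'$ (a consequence of the support conditions defining the perverse t-structure, cf.\ \cite{BBD}, 4.2.4), an induction on the perverse amplitude yields $H^r_c(U_S,\sF|_{U_S}) = 0$ for $r > a + \dim U_S \le a + \dim X$. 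Combining these two bounds gives weight $\le w + a + \dim X$ on each K\"unneth summand, hence on each $E_1^{p,q}$, and therefore on $H^n_c(V,j_V^*(\sF\boxtimes\mathbb{D}_X\sF))$.

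The main obstacle is conceptual rather than technical: one must recognize that although $V$ has dimension $2\dim X$, the partial Frobenius is trivial on one of the two factors, so the effective dimension entering the weight bound is $\dim X$ (coming from the first K\"unneth factor) rather than $2\dim X$. This is precisely what allows the bound to match the statement of Theorem~\ref{bound_local_term_invariant_subscheme_introduction} and thereby produce an error term of order $q^{n(d-1/2)}$ in Theorem~\ref{non_uniform_estimate}, as discussed in Remark~\ref{perversity_drop_remark}.
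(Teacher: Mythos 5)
Your proof is correct and follows essentially the same route as the paper: a Mayer--Vietoris/\v{C}ech reduction to the single-product case $U\times_k U$, then the K\"unneth (projection formula) decomposition under which $F_l^*$ becomes $F_{U/\F_q}^*\otimes\mathrm{id}$, and finally Deligne's weight bound combined with the vanishing $H^r_c(U,\sF|_U)=0$ for $r>a+\dim X$ coming from the perverse amplitude (\cite{BBD}, 4.2.4 and 5.1.14). The paper phrases the reduction as an induction via the long exact Mayer--Vietoris sequence and writes the second K\"unneth factor as $H^{i-n}(U,\sF|_U)^{\vee}$ rather than $H^s_c(U,\mathbb{D}_U\sF|_U)$, but these are identified by Verdier duality, so the two arguments are the same in substance.
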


\begin{proof}

If $V=V' \cup V''$,~where $V'$ and $V''$ are elements in $\sP$,~the Mayer-Vietoris sequence implies that the Lemma is true for $V$,~if it is true for $V',~V''$ and $V' \cap V''$.

Now suppose $V=(U_1 \times U_1) \bigcup (\cup_{i=2}^{r} (U_i \times_k U_i))$ is an union of $r \geq 2$ open sets of the form $U_i \times_k U_i$.~Note that the intersection $(U_1 \times U_1) \cap \cup_{i=2}^{r} (U_i \times_k U_i) = \cup \cup_{i=2}^{r} ((U \cap U_i) \times_k (U \cap U_i))$,~is an union of atmost $r-1$ open sets of the form $U_i \times_k U_i$.~Thus Mayer-Vietoris allows us to reduce to the case $r=1$.

Hence suppose $V=U \times_k U$ for an open subset $U$ of $X$ defined over $\F_q$.~Then Kunneth formula and Poincare duality imply that for any integer $n$ 
\begin{equation}\label{partial_weights_Frobenius_2}
H^n_c(U \times_k U,~j^*(\sF \boxtimes \mathbb{D}_X \sF)) \simeq \oplus_i (H^i_c(U ,\sF|_{U}) \otimes_{\bar{\Q}_{\ell}} (H^{i-n}(U,\sF|_{U}))^{\vee}),
\end{equation}

\noindent and that the action of $F_{l}^*$ on $H^j_c(U \times_k U,~j^*(\sF \boxtimes \mathbb{D}_X \sF))$ under the isomorphism (\ref{partial_weights_Frobenius_2}) corresponds to $F_{U/\F_q}^* \otimes 1$ on each factor  $H^i_c(U,\sF|_{U}) \otimes_{\bar{\Q}_{\ell}} (H^{i-n}(U,\sF|_{U}))^{\vee})$.~Here $F_{U/\F_q}^*$ is the Frobenius pullback on $H^i_c(U,\sF|_{U})$.

Since $\sF$ comes from a mixed sheaf $\sF_0$,~the eigenvalues of Frobenius on $H^i_c(U,\sF|_{U})$ are Weil numbers (\cite{Deligne_Weil_II},~Th\'eor\`eme 1.1).~Moreover since $\sF_0$ is of weight less than or equal to $w$,~ the Frobenius weights on $H^i_c(U,\sF|_{U})$ are bounded above by $w+i$ (\cite{BBD},~5.1.14 (i)).~Also $\sF|_{U}$ continues to be in $~^pD^{\leq a}$ (\cite{BBD},~Proposition 1.4.16 (i)).~Thus $H^i_c(U,\sF|_{U})$ vanishes for $i>\text{dim}(X)+a$ (\cite{BBD}, Section 4.2.4).

Hence (\ref{partial_weights_Frobenius_2}) implies that the eigenvalues of $F_l^*$ acting on any $H^n_c(V,j_V^*(\sF \boxtimes \mathbb{D}_X \sF))$ are Weil numbers of weight less than or equal to $w+a+\text{dim}(X)$.

\end{proof}

We will need the following elementary linear algebra lemma,~ whose proof is carried out for the sake of completion.

\begin{lem}\label{linear_algebra_bound}
Let $V$ be a finite dimensional complex vector space.~Let $T:V \to V$ be a linear map,~and $w \in \R^{\geq 1}$,~a positive real number such that all the eigenvalues $\alpha$  of $T$ satisfy $|\alpha| \leq w$.

Let $\Phi$ be a linear functional on $V$.~Then for any $v \in V$ 

\begin{enumerate}
\item there exists a positive real number $M_v$ such that,~for any $n \geq 0$

\begin{equation}\label{linear_algebra_bound_1}
|\Phi(T^nv)| \leq M_v n^{\text{dim}(V)}w^{n}.
\end{equation}

\item The power series $\sum_{n \geq 1} \Phi(T^nv)t^n \in \C[[t]] \subseteq \C((t))$,~is a rational function of $t$,~that is it belongs to $\C(t) \subseteq \C((t))$

\end{enumerate}

\end{lem}

\begin{proof}
If $T$ is semi-simple,~choosing a basis in which $T$ is diagonal we have 

\begin{equation}\label{linear_algebra_bound_2}
|\Phi(T^nv)| \leq M'_v w^{n},
\end{equation}

\noindent for some constant $M'_v$ possibly depending on $v$.~Also $(2)$ is immediate,~by choosing a basis in which $T$ is diagonal.

In general $T$ can be written as a sum of two commuting operators $T^{\text{ss}}$ and $T^{\text{nil}}$,
\begin{equation}\label{linear_algebra_bound_3}
T=T^{\text{ss}}+T^{\text{nil}},
\end{equation}
\noindent with $T^{\text{ss}}$  semi-simple and $T^{\text{nil}}$  nilpotent.~Thus for any positive integer $n$

\begin{equation}\label{linear_algebra_bound_4}
T^n=\sum_{i=0}^{\text{min}\{n,\text{dim}(V)\}} {n \choose i} (T^{\text{ss}})^{n-i}(T^{\text{nil}})^i.
\end{equation}

Since the set of eigenvalues of $T$ and $T^{\text{ss}}$ are the same,~combining (\ref{linear_algebra_bound_2}) and (\ref{linear_algebra_bound_4}) we get the necessary bound as shown below

\begin{equation}
|\Phi(T^nv)| \leq \sum_{i=0}^{\text{min}\{n,\text{dim}(V)\}}{n \choose i} M'_{(T^{\text{nil}})^iv}w^{n-i} \leq M_vn^{\text{dim}(V)}w^{n},
\end{equation}

\noindent with $M_v \colonequals (\text{dim}(V)+1)\max \limits_{0 \leq i \leq \text{dim}(V)} M'_{(T^{\text{nil}})^iv}$.

$(2)$ is also immediate in the non-semi-simple case as a consequence of the equality (\ref{linear_algebra_bound_4}),~and the result in the semi-simple case.

\end{proof}

\subsection{Proof of Theorem \ref{local_term_growth_good_correspondence}}

Combining Proposition \ref{local_terms_using_functional},~Lemma \ref{Frobenius_pullback_cohomology_classes},~Lemma \ref{partial_weights_Frobenius} and Lemma \ref{linear_algebra_bound},~$(1)$ we have for all $n \geq 0$,

\begin{equation}\label{bound_local_term_invariant_subscheme_main_section_proof_2}
|\text{LT}(u^{(n)})| \leq M(\tau)n^{r}q^{n(\frac{(w+a+\text{dim}(X))}{2})},
\end{equation}

\noindent for some positive real number $M(\tau)$.~Here $r=\text{dim}_{\bar{\Q}_{\ell}}(H^{0}_c(U,~j^*(\sF \boxtimes \mathbb{D}_X \sF)))$.

Given any $\e>0$,~let $N(\e)$ be the smallest positive integer $n$ such that $q^{n \e} \geq n^r$.~Then (\ref{bound_local_term_invariant_subscheme_main_section_proof_2}) implies that for any $n \geq N(\e)$,

\begin{equation}\label{bound_local_term_invariant_subscheme_main_section_proof_3}
|\text{LT}(u^{(n)})| \leq M(\tau)q^{n(\frac{(w+a+\text{dim}(X))}{2}+\e)},
\end{equation}

\noindent as desired.

\begin{rmk}\label{Remark_on_non_uniformity}
The proof of Lemma \ref{linear_algebra_bound} allows us to explicitly determine $M(\tau)$ in (\ref{bound_local_term_invariant_subscheme_main_section_proof_3}).~Indeed if one knew that the action of the partial Frobenius on $H^{0}_c(U,~j^*(\sF \boxtimes \mathbb{D}_X \sF))$ was semi-simple then one could have used (\ref{linear_algebra_bound_2}) instead to obtain a sharper bound as compared to (\ref{bound_local_term_invariant_subscheme_main_section_proof_3}).~Morevover as observed there,~the constant $M(\tau)$ would then only depend on the class $[\Delta]$.

\end{rmk}

\subsection{Proof of Theorem \ref{non_uniform_estimate},~$(2)$}\label{proof_part_II}

We proceed as in the proof of Theorem \ref{non_uniform_estimate},~$(1)$ and $(3)$ in Section \ref{enough_to_bound_local} and conclude that (using the same notations)

\begin{equation}\label{proof_part_II'}
\#\text{Fix}(c^{(n)}(k))=\text{LT}(u^{(n)})-\text{LT}(u^{(n)}|_Z).
\end{equation}

The result now follows from combining the Lefschetz-Verdier trace formula,~Proposition \ref{local_terms_using_functional},~Lemma \ref{linear_algebra_bound},~$(2)$,~and a standard Hankel determinant argument (see for example \cite{Del74},~Lemme 1.7).

\appendix

\section{A cohomological correspondence of the intersection complex}\label{cohomological_IC}

In this section, we will use  geometric semi-simplicity of pure perverse sheaves to construct a cohomological correspondence.~A similar result was earlier obtained in an unpublished note of Hanamura-Saito \cite[Theorem 2]{Hanamura_Saito}.~For the sake of completeness we present an alternative approach here.~Throughout this section let $k_0$ be an arbitrary finite field.~Let $k$ be an algebraic closure of $k_0$.

\subsection{Basic properties of $\text{IC}_X$}

In this section we summarize a few basic properties of $\text{IC}_X$ which will be used later.~Recall for any variety $X/k$ of dimension $d$ there is a natural map

\begin{equation}\label{natural_map_constant_to_intersection_complex}
\bar{\Q}_{\ell}[d] \to \text{IC}_X,
\end{equation}

\noindent which is an isomorphism on the regular locus of $X$.~An analogous map exists for varieties over $k_0$ also.

We can complete (\ref{natural_map_constant_to_intersection_complex}) into a triangle 

\begin{equation}\label{triangle_constant_intersection}
\xymatrix{\bar{\Q}_{\ell}[d] \ar[r] & \text{IC}_X \ar[r] & \sF \ar[r]^-{+1}&}.
\end{equation}

Clearly $\sF$ is supported on the singular locus of $X$.~Since the singular locus is of dimension at most $d-1$,~the restriction of $\bar{\Q}_{\ell}[d]$ to the singular locus is in $~^p{D}^{\leq -1}$ (\cite{BBD},~(4.0.1)').~Further $\text{IC}_X$ by construction,~when restricted to the singular locus belongs to $~^pD^{\leq -1}$.~Thus $\sF$ is in $~^pD^{\leq -1}$,~and has dimension of support at most $d-1$.~Hence $H^{i}_c(X,\sF)$ vanishes for $i \geq d-1$ (\cite{BBD},~ Section 4.2.4).~Thus the natural map

\begin{equation}\label{triangles_constant_intersection_1}
H_c^{2d}(X,\bar{\Q}_{\ell}) \to H_c^{d}(X,\text{IC}_X), 
\end{equation}

\noindent is an isomorphism

Dualizing (\ref{natural_map_constant_to_intersection_complex}) gives a natural map 

\begin{equation}\label{intersection_complex_to_dualizing_complex}
\text{IC}_X \to K_X[-d],
\end{equation}

\noindent which is also an isomorphism on the regular locus.~Dualizing (\ref{triangles_constant_intersection_1}) implies that the natural map

\begin{equation}\label{triangles_constant_intersection_1_dual}
H^{-d}(X,\text{IC}_X) \to  H^{-2d}(X,K_X), 
\end{equation}

\noindent induced by (\ref{intersection_complex_to_dualizing_complex}) is an isomorphism.


For any non-empty open subset $j:U \hookrightarrow X$,~functoriality of the adjunction $j_! j^* \to 1$ applied to (\ref{natural_map_constant_to_intersection_complex}) and (\ref{intersection_complex_to_dualizing_complex}) gives a commutative diagram 

\begin{equation}\label{functoriality_trace_map}
\xymatrix{H^{0}_c(U,K_U) \ar[r]^{\cong} & H_c^0(X,K_X) \ar[r]_-{\text{Tr}_X}^-{\cong} & \bar{\Q}_{\ell} \\
               H^{d}_c(U,\text{IC}_U) \ar[r]^-{\cong} \ar[u] & H_c^{d}(X,\text{IC}_X) \ar[u] \\
                H^{2d}_c(U, \bar{\Q}_{\ell}) \ar[u]^{\cong}_{(\ref{triangles_constant_intersection_1})} \ar[r]^{\cong} & H_c^{2d}(X,\bar{\Q}_{\ell}) \ar[u]^{\cong}_{(\ref{triangles_constant_intersection_1})} }.
\end{equation}

Here $\text{Tr}_X$ is the natural trace map on $H_c^0(X,K_X)$.~Since $X$ is assumed to be a variety and $U$ is a nonempty open subset of $X$,~the top and bottom rows of (\ref{functoriality_trace_map}) are isomorphisms.

If $U$ is contained in the regular locus of $X$ then the arrows in the left column of (\ref{functoriality_trace_map}) are trivially isomorphisms,~and thus the natural map 

\begin{equation}\label{top_cohomology_pushforward}
H_c^{d}(X,\text{IC}_X) \to H^0_c(X,K_X),
\end{equation}

\noindent is also an isomorphism.

By abuse of notation, we will also call the composite map 

\begin{equation}\label{our_definition_trace}
H^{2d}_c(X,\bar{\Q}_{\ell}) \to \bar{\Q}_{\ell}
\end{equation}

\noindent in (\ref{functoriality_trace_map}) as $\text{Tr}_X$.~Note that this coincides with the usual trace map when $X$ is regular.

We can also dualize the diagram (\ref{functoriality_trace_map})

\begin{equation}\label{functoriality_trace_map_dual}
\xymatrix{H^{0}(U,\bar{\Q}_{\ell}) \ar[d] & H^0(X,\bar{\Q}_{\ell}) \ar[l]_-{\cong} \ar[d] & \bar{\Q}_{\ell} \ar[l]_-{\cong}  \\
               H^{-d}(U,\text{IC}_U) \ar[d]_-{\cong}^-{(\ref{triangles_constant_intersection_1_dual})} & H^{-d}(X,\text{IC}_X) \ar[d]_-{\cong}^-{(\ref{triangles_constant_intersection_1_dual})} \ar[l]_-{\cong}  \\
                H^{-2d}(U, K_U) & H^{-2d}(X,K_X) \ar[l]_-{\cong} }.
\end{equation}

Also (\ref{top_cohomology_pushforward}) dualizes to give a natural isomorphism

\begin{equation}\label{top_cohomology_pushforward_dual}
H^{0}(X,\bar{\Q}_{\ell}) \simeq H^{-d}(X,\text{IC}_X).
\end{equation}

\subsection{Decomposition in the derived category and a Lemma}

Let $f_0:Y_0 \to X_0$ be a projective morphism of schemes of finite type over $k_0$. Let $\eta \in H^{2}(Y_0,~\Q_{\ell}(1))$ be the Chern class of a relatively ample line bundle on $Y_0$.~Then $\eta$  defines a map (in $D^b_c(Y_0,\bar{\Q}_{\ell})$)

\begin{equation}\label{map_derived_category_relatively_ample_1}
\eta:\bar{\Q}_{\ell} \to \bar{\Q}_{\ell}(1)[2].
\end{equation}

Tensoring (\ref{map_derived_category_relatively_ample_1}) with $\text{IC}_{Y_0}$,~we get a map

\begin{equation}\label{map_derived_category_relatively_ample_2}
\eta_{\text{IC}_{Y_0}}:\text{IC}_{Y_0} \to \text{IC}_{Y_0}(1)[2].
\end{equation}

Thus for any integer $i$,~(\ref{map_derived_category_relatively_ample_2}) induces maps on the perverse cohomologies

\begin{equation}\label{map_derived_category_relatively_ample_3}
\eta^i:~^pH^{-i}(f_{0*}IC_{Y_0}) \to ~^pH^{i}(f_{0*}IC_{Y_0})(i).
\end{equation}

Purity of $\text{IC}_{Y_0}$ (\cite{BBD},~Corollaire 5.3.2) and the relative hard Lefschetz (\cite{BBD},~Th\'eor\`eme 5.4.10) imply that the maps in (\ref{map_derived_category_relatively_ample_3}) are isomorphisms.~Using these isomorphisms Deligne obtained a canonical self dual isomorphism over $k_0$ (\cite{Deligne_Decomposition},~Section 3)\footnote{If one assume $f_0$ is only \textit{proper}, purity of $\text{IC}_{Y_0}$ \cite[Th\'eor\`eme 5.3.8]{BBD} together with \cite[Proposition 6.2.6]{Deligne_Weil_II}
implies that $f_{0*}\text{IC}_{Y_0}$ is pure and hence by \cite[Th\'eor\`eme 5.4.5]{BBD} there exists a \textit{non-canonical} isomorphism over $k$
\begin{equation}\label{Deligne_Decomposition_over_k}
f_{*}\text{IC}_{Y} \simeq \oplus_{i} ~^pH^i(f_{*}IC_{Y})[-i].
\end{equation} 
It is unclear how to descend this isomorphism to a finite subfield of $k$. I thank the referee for pointing this out.}


\begin{equation}\label{Deligne_Decomposition}
f_{0*}\text{IC}_{Y_0} \simeq \oplus_{i} ~^pH^i(f_{0*}IC_{Y_0})[-i].
\end{equation}

\subsubsection{A Lemma}
Let $j_0:U_0 \hookrightarrow X_0$ be an open immersion.~Let $K_0$ be a perverse sheaf on $X_0$.~The adjoint triple $(j_!,j^*,j_*)$ gives rise to a commutative diagram of perverse sheaves on $X_0$

\begin{equation}\label{pure_perverse_map_1}
\xymatrix{ & \text{image}(\phi_0) \ar@{^{(}->}[d] \ar@{->>}[r]^-{\tilde{\psi_0}} & j_{0!*}j_0^*K_0 \colonequals \text{image}(\psi_0 \circ \phi_0) \ar@{^{(}->}[d] \\
^pj_{0!}j^*_0K_0 \ar@{->>}[ur]^-{\phi_0} \ar[r]^-{\phi_0} & K_0 \ar[r]^-{\psi_0} & ^pj_{0*}j^*_0K_0}.
\end{equation}

\begin{lem}\label{pure_perverse_map}
If in addition, $K_0$ is pure then,~$\tilde{\psi_0}$ is an isomorphism.
\end{lem}

\begin{proof}

To check that $\tilde{\psi_0}$ is an isomorphism we can work geometrically.~Thus we have a diagram analogous to Diagram \ref{pure_perverse_map_1} but over $k$

\begin{equation}\label{pure_perverse_map_2}
\xymatrix{ & \text{image}(\phi) \ar@{^{(}->}[d] \ar@{->>}[r]^-{\tilde{\psi}} & j_{!*}j^*K \ar@{^{(}->}[d] \\
^pj_{!}j^*K \ar@{->>}[ur]^-{\phi} \ar[r]^-{\phi} & K \ar[r]^-{\psi} & ^pj_{*}j^*K},
\end{equation}

\noindent and we need to show that $\tilde{\psi}$ is an isomorphism.

Since $K_0$ is assumed to be pure,~it is geometrically semi-simple (\cite{BBD},~Th\'eor\`eme 5.3.8).~Thus we can assume that $K$ is a simple perverse sheaf.

If $j^*K$ is $0$,~then so are $\text{image}(\phi)$ and $j_{!*}j^*K$,~and $\tilde{\psi}$ is trivially an isomorphism.~Else $\text{image}(\phi)$ is necessarily nonzero (since its restriction to $U$ is nonzero),~and by the simplicity of $K$ is necessarily equal to $K$.~Thus $\text{image}(\phi)$ is also simple,~and hence the surjection $\tilde{\psi}$ is necessarily an isomorphism.

\end{proof}

Lemma \ref{pure_perverse_map} implies that when $K_0$ is a pure perverse sheaf on $X_0$,~there is a natural injection of perverse sheaves

\begin{equation}\label{map_intermediate_extension_1}
j_{0!*}j_0^*K_0 \to K_0,
\end{equation}

\noindent whose restriction to $U_0$ is the natural isomorphism

\begin{equation}\label{map_intermediate_extension_2}
j_0^*j_{0!*}j^*_0K_0 \simeq j^*_0K_0.
\end{equation}

\subsection{A pullback map on $\text{IC}_X$} 

Let $f \colon Y \to X$ be a \textit{dominant} and projective morphism of varieties of the same dimension $d$ over $k$.~Let $f_0 \colon Y_0 \to X_0$ be a choice of model (of $f \colon Y \to X$) over a finite sub field $k_0$ of $k$.

Let $j_0 \colon U_0 \hookrightarrow X_0$ be a non-empty regular open subset of $X_0$. We have a Cartesian diagram

\begin{equation}\label{pullback_intersection_cohomology_-1}
\xymatrix{f_0^{-1}(U_0) \ar[d]^-{f_0'} \ar@{^{(}->}[r]^-{j'_0} & Y_0 \ar[d]^-{f_0} \\
               U_0 \ar@{^{(}->}[r]^-{j_0} & X_0 }.
\end{equation}

By proper base change 

\begin{equation}\label{pullback_intersection_cohomology_-2}
j_0^*f_{0*}\text{IC}_{Y_0} \simeq ~f'_{0*}\text{IC}_{f_0^{-1}(U_0)}.
\end{equation}

On $f_0^{-1}(U_0)$ (\ref{natural_map_constant_to_intersection_complex}) implies that there exists a natural map 

\begin{equation}\label{pullback_intersection_cohomology_-3'}
\bar{\Q}_{\ell}[d]  \to \text{IC}_{f_0^{-1}(U_0)}.
\end{equation}

Since $U_0$ is regular,~we have morphisms

\begin{equation}\label{pullback_intersection_cohomology_-3}
\xymatrix{ j^*_0 \text{IC}_{X_0} \ar[r]^-{\cong} & \bar{\Q}_{\ell}[d] \ar[r]^-{(A)} & f'_{0*}(\bar{\Q}_{\ell}[d]) \ar[r]_-{(\ref{pullback_intersection_cohomology_-3'})} & f'_{0*}\text{IC}_{f_0^{-1}(U_0)} \ar[r]^-{\cong (\text{BC})}_-{(\ref{pullback_intersection_cohomology_-2})} & j_0^*f_{0*}\text{IC}_{Y_0}}.
\end{equation}

Here the morphism $(A)$ is induced by adjunction.~Denote by $f'_{0*}$ the composite map

\begin{equation}\label{pullback_intersection_cohomology_-3''}
f'_{0*} \colon j^*_0 \text{IC}_{X_0} \to  j_0^*f_{0*}\text{IC}_{Y_0},
\end{equation}

\noindent in (\ref{pullback_intersection_cohomology_-3}),~and by $f'_{*}$ the corresponding map 

\begin{equation}\label{pullback_intersection_cohomology_-4}
f'_* \colon  j^* \text{IC}_{X} \to  j^*f_{*}\text{IC}_{Y},
\end{equation}

\noindent obtained by base change of (\ref{pullback_intersection_cohomology_-3''}) to $k$.~By construction of (\ref{pullback_intersection_cohomology_-4}),~we have a commutative diagram of sheaves on $U$

\begin{equation}\label{pullback_intersection_cohomology_-5}
 \xymatrix{\bar{\Q}_{\ell}[d] \ar[d]^-{(A)} \ar[r]^-{\cong} & j^* \text{IC}_{X} \ar[d]^-{(\ref{pullback_intersection_cohomology_-4})}   \\
                  f'_*\bar{\Q}_{\ell}[d] \ar[d]_-{(\ref{natural_map_constant_to_intersection_complex})} \ar[r] & j^*f_{*}\text{IC}_{Y} \\
                     f'_* \text{IC}_{f^{-1}(U)} \ar[ur]_-{(\text{BC})}^-{\cong} &       },
\end{equation}

\noindent where $(A)$ is induced by adjunction.

\begin{rmk}\label{no_confusion}
The $f'_{0*}$ in (\ref{pullback_intersection_cohomology_-3''}) is not to be confused with the functor $f'_{0*}$.~Whenever either makes an appearance it will be clear from the context which one we mean.
\end{rmk}

Having made these choices there is a canonical map as shown in the following lemma.

\begin{prop}\label{pullback_intersection_cohomology}
There is a map $f_* \colon \text{IC}_X \to f_*\text{IC}_Y$ (defined over $k_0$) such that when restricted to $U$ it is the map in (\ref{pullback_intersection_cohomology_-4}).

\end{prop}

\begin{proof}
We begin by choosing a decomposition 

\begin{equation}\label{pullback_intersection_cohomology_1_1}
\phi_0 \colon f_{0*}\text{IC}_{Y_0} \simeq \oplus_{i}~^pH^i(f_{0*}IC_{Y_0})[-i].
\end{equation}

Let $\phi$ be the corresponding isomorphism over $k$.~Let 

\begin{equation}\label{pullback_intersection_cohomology_1_2}
\pi_0 \colon \oplus_{i}  ~^pH^i(f_{0*}IC_{Y_0})[-i] \to ~^pH^{0}(f_{0*}IC_{Y_0})
\end{equation}

\noindent be the projection to the zeroth graded piece.~Let $\pi$ be the corresponding projection over $k$.~Let 

\begin{equation}\label{pullback_intersection_cohomology_1_3}
u_{0}  \colon ~^pH^0(f_{0*}IC_{Y_0}) \to \oplus_{i}  ~^pH^i(f_{0*}IC_{Y_0})[-i] 
\end{equation}

\noindent be the inclusion of the zeroth graded piece,~and denote by $u$ the corresponding inclusion over $k$.

Restricting (\ref{pullback_intersection_cohomology_1_1}) to $U_0$,~and composing with (\ref{pullback_intersection_cohomology_-3''}) gives us maps

\begin{equation}\label{pullback_intersection_cohomology_1_4}
f_{0*}^{'i} \colon j_0^*\text{IC}_{X_0} \to j_0^*~^pH^i(f_{0*}IC_{Y_0})[-i],
\end{equation}

\noindent by projecting on each of the summand.~Let $f_{*}^{'i}$ be the corresponding maps over $k$.

Since $j_0^*\text{IC}_{X_0}$ and $ j_0^*~^pH^i(f_{0*}IC_{Y_0})$ are perverse sheaves on $U_0$,~the maps $f_{0*}^{'i}$ are all zero for $i>0$ (\cite{BBD},~Corollaire 2.1.21).~Further since $^pH^i(f_{0*}IC_{Y_0})$ is pure of weight $i$,~the morphisms $f_{*}^{'i}$ are all zero for $i<0$ (\cite{BBD},~Proposition 5.1.15 (iii)).~Thus

\begin{equation}\label{pullback_intersection_cohomology_1_5}
f'_*=(j^* \phi^{-1}) \circ (j^*u) \circ f^{'0}_*.
\end{equation}

Note that the maps on either side of the equality (\ref{pullback_intersection_cohomology_1_5}) are defined over $k_0$,~but the equality is over $k$.

The map $f^{'0}_{0*}$ is a map of pure perverse sheaves over $k_0$,~thus applying Lemma \ref{pure_perverse_map} in the form of (\ref{map_intermediate_extension_1}),~we get a natural map

\begin{equation}\label{pullback_intersection_cohomology_1_6}
f^{0}_{0*} \colon \text{IC}_{X_0} \to~^pH^0(f_{0*}IC_{Y_0})
\end{equation}

\noindent of perverse sheaves on $X_0$,~which restricts to $f^{'0}_{0*}$ over $U_0$.

We define

\begin{equation}\label{pullback_intersection_cohomology_1_6}
f_{0*} \colonequals \phi^{-1}_0 \circ u_0 \circ  f^{0}_{0*} \colon  \text{IC}_{X_0} \to f_{0*} \text{IC}_{Y_0}.
\end{equation}

Let $f_*$ be the corresponding map over $k$.~Since by construction $f^{0}_{0*}$ restricts to $f^{'0}_{0*}$ over $U_0$,~(\ref{pullback_intersection_cohomology_1_5}) implies that $f_*$ restricts to $f'_*$ over $U$ as desired.

\end{proof}

We also have the dual map.

\begin{cor}\label{dual_map_intersection_cohomology}
There is natural map 

\begin{equation}\label{dual_map_intersection_cohomology_1}
f^{\vee}_* \colon f_*\text{IC}_Y \to \text{IC}_X
\end{equation}

\noindent (defined over $k_0$) such that when restricted $U$ it is dual to the map (\ref{pullback_intersection_cohomology_-4}).
\end{cor}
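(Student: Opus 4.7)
The plan is to obtain $f^{\vee}_*$ by applying Verdier duality to the morphism $f_{0*} \colon \text{IC}_{X_0} \to f_{0*}\text{IC}_{Y_0}$ constructed in Proposition \ref{pullback_intersection_cohomology}, and then using the canonical self-duality (up to a Tate twist) of the intersection complex to identify source and target.

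More precisely, first I would apply $\mathbb{D}_{X_0}$ to $f_{0*}$. Since $f_0$ is proper, one has the canonical isomorphism $\mathbb{D}_{X_0} \circ f_{0*} \simeq f_{0*} \circ \mathbb{D}_{Y_0}$ (coming from $f_! = f_*$ and the standard compatibility of Verdier duality with $f_*$), yielding a morphism
\begin{equation*}
f_{0*}\mathbb{D}_{Y_0}\text{IC}_{Y_0} \longrightarrow \mathbb{D}_{X_0}\text{IC}_{X_0}
\end{equation*}
defined over $k_0$. Next, since $\text{IC}_{V_0}$ is defined as the intermediate extension of $\bar{\Q}_{\ell}[d]$ from the regular locus and the intermediate extension functor commutes with Verdier duality (\cite{BBD}, Corollaire 2.1.23), there is a canonical isomorphism $\mathbb{D}_{V_0}\text{IC}_{V_0} \simeq \text{IC}_{V_0}(d)$ for any $d$-dimensional variety $V_0/k_0$. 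Applying this to $X_0$ and $Y_0$ (both of dimension $d$) and untwisting by $(-d)$ produces the desired morphism
\begin{equation*}
f_{0*}^{\vee} \colon f_{0*}\text{IC}_{Y_0} \longrightarrow \text{IC}_{X_0},
\end{equation*}
and we set $f_*^{\vee}$ to be its base change to $k$.

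It remains to verify the compatibility on $U$. Restricting to $U_0 \subseteq X_0$, the identifications $\text{IC}_{X_0}|_{U_0} \simeq \bar{\Q}_{\ell}[d]$ and $\text{IC}_{f_0^{-1}(U_0)} \simeq \bar{\Q}_{\ell}[d]$ are compatible with the canonical duality isomorphism used above (since on a regular variety the self-duality $\mathbb{D}\bar{\Q}_{\ell}[d] \simeq \bar{\Q}_{\ell}[d](d)$ is the standard one). Thus restricting $f_{0*}^{\vee}$ to $U_0$ yields, by the naturality of Verdier duality applied to each of the steps in the definition (\ref{pullback_intersection_cohomology_-3}) of $f'_{0*}$, precisely the Verdier dual of the map $f'_{0*}$ in (\ref{pullback_intersection_cohomology_-4}) (with the Tate twists cancelled).

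The only real subtlety is bookkeeping: keeping track of where the Tate twist $(d)$ appears and verifying that the canonical self-duality of $\text{IC}$ is compatible, on the regular open $U$, with the naive self-duality $\mathbb{D}\bar{\Q}_{\ell}[d] \simeq \bar{\Q}_{\ell}[d](d)$ used to make sense of ``dual to (\ref{pullback_intersection_cohomology_-4}).'' Once this compatibility is recorded (it is formal from the construction of the intermediate extension), everything else reduces to proper base change and the functoriality of Verdier duality, both of which have already been invoked in the construction of $f_*$.
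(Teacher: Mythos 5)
Your proof is correct and is essentially the intended argument: the paper states the corollary with no written proof beyond the phrase ``We also have the dual map,'' and the construction it has in mind is exactly what you carry out — apply $\mathbb{D}_{X_0}$ to the map $f_{0*}\colon\text{IC}_{X_0}\to f_{0*}\text{IC}_{Y_0}$ of Proposition \ref{pullback_intersection_cohomology}, use properness of $f_0$ to commute $\mathbb{D}$ past $f_{0*}$, invoke the self-duality $\mathbb{D}\,\text{IC}\simeq\text{IC}(d)$, untwist, and base change to $k$. Your check of the compatibility on $U$ via the naturality of duality applied to each factor of (\ref{pullback_intersection_cohomology_-3}) is also the right way to see that the restriction agrees with the Verdier dual of (\ref{pullback_intersection_cohomology_-4}).
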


We can also apply adjunction to $f_*$ to obtain the following.

\begin{cor}\label{adjunction_map_intersection_cohomology}
There is a natural map 

\begin{equation}\label{adjunction_map_intersection_cohomology_1}
f^* \colon f^*\text{IC}_X \to \text{IC}_Y,
\end{equation}

\noindent which when restricted to $f^{-1}(U)$ is the map in (\ref{natural_map_constant_to_intersection_complex}).
\end{cor}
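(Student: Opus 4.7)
The plan is to obtain $f^*$ by applying the $(f^*, f_*)$ adjunction to the morphism $f_*$ constructed in Proposition \ref{pullback_intersection_cohomology}. Concretely, define
\[
f^* \;:=\; \bigl( f^*\text{IC}_X \xrightarrow{f^*(f_*)} f^*f_*\text{IC}_Y \xrightarrow{\varepsilon_{\text{IC}_Y}} \text{IC}_Y \bigr),
\]
where $\varepsilon$ denotes the counit of the adjunction $(f^*, f_*)$. Since Proposition \ref{pullback_intersection_cohomology} already produces $f_{0*}$ over $k_0$ and the counit is functorial, the resulting morphism descends to $k_0$.

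It then remains to identify $f^*|_{f^{-1}(U)}$ with the natural map (\ref{natural_map_constant_to_intersection_complex}). Let $f' \colon f^{-1}(U) \to U$ and $j' \colon f^{-1}(U) \hookrightarrow Y$ be as in (\ref{pullback_intersection_cohomology_-1}). Standard compatibility of the adjunctions $(f^*, f_*)$ and $(f'^{*}, f'_*)$ with the base change isomorphism $j^* f_* \text{IC}_Y \simeq f'_* \text{IC}_{f^{-1}(U)}$ identifies the restriction $j'^{*} f^*$ with the $(f'^{*}, f'_*)$-adjoint of the composition
\[
j^*\text{IC}_X \xrightarrow{j^*(f_*)} j^*f_*\text{IC}_Y \xrightarrow{(\text{BC})} f'_*\text{IC}_{f^{-1}(U)}.
\]
By Proposition \ref{pullback_intersection_cohomology} this composition is the map (\ref{pullback_intersection_cohomology_-4}), and the commutative diagram (\ref{pullback_intersection_cohomology_-5}) factors it as the unit $j^*\text{IC}_X \simeq \bar{\Q}_{\ell}[d] \to f'_*\bar{\Q}_{\ell}[d]$ followed by $f'_*$ applied to the natural map (\ref{pullback_intersection_cohomology_-3'}). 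Taking the $(f'^{*}, f'_*)$-adjoint and invoking the zig-zag identity collapses the unit-counit pair, producing exactly the map $\bar{\Q}_{\ell}[d] \to \text{IC}_{f^{-1}(U)}$ of (\ref{natural_map_constant_to_intersection_complex}) under the canonical identification $f'^{*}\bar{\Q}_{\ell}[d] \simeq \bar{\Q}_{\ell}[d]$.

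I expect no substantive obstacle here: the real content is already encoded in Proposition \ref{pullback_intersection_cohomology}, and the corollary reduces to unwinding the $(f^*, f_*)$ adjunction together with base change through the Cartesian square (\ref{pullback_intersection_cohomology_-1}). The only care needed is to ensure that the base change isomorphism is compatible with the two adjunctions and that the $k_0$-rationality is preserved throughout; both are routine bookkeeping.
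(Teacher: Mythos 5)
Your proposal is correct and is exactly the approach the paper intends: the paper itself proves Corollary \ref{adjunction_map_intersection_cohomology} by the single remark "We can also apply adjunction to $f_*$," and your argument simply unwinds that adjunction and verifies the compatibility with restriction to $f^{-1}(U)$ via base change and the zig-zag identity. The $k_0$-rationality bookkeeping is handled the same way as in the paper, since $f_{0*}$ and the counit are both defined over $k_0$.
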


Dualizing we also have the following result.

\begin{cor}\label{adjunction_dual_map_intersection_cohomology}
There is a natural map 

\begin{equation}\label{adjunction_dual_map_intersection_cohomology_1}
f^! \colon \text{IC}_Y \to f^{!}\text{IC}_X,
\end{equation}

\noindent which when restricted to $f^{-1}(U)$ is dual to the map in (\ref{natural_map_constant_to_intersection_complex}).

\end{cor}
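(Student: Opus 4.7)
The plan is to obtain the map $f^!$ by transposing the map $f^{\vee}_* \colon f_*\text{IC}_Y \to \text{IC}_X$ of Corollary \ref{dual_map_intersection_cohomology} under the $(f_!,f^!)$ adjunction. Since $f$ is projective, hence proper, we have the canonical identification $f_! \simeq f_*$, so $f^{\vee}_*$ is equivalently a morphism $f_!\text{IC}_Y \to \text{IC}_X$. The adjunction $(f_!,f^!)$, which is already defined at the level of $k_0$, then produces a canonical morphism $\text{IC}_Y \to f^!\text{IC}_X$, and we take this as $f^!$. By construction it is defined over $k_0$.

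For the compatibility on $f^{-1}(U)$, I would unfold Corollary \ref{dual_map_intersection_cohomology} there: the restriction $f^{\vee}_*|_U$ is by definition the Verdier dual of the map $f'_*$ in (\ref{pullback_intersection_cohomology_-4}). Diagram (\ref{pullback_intersection_cohomology_-5}) shows that on the regular open $U$ this $f'_*$ factors through the canonical isomorphism $\text{IC}_X|_U \simeq \bar{\Q}_{\ell}[d]$ followed by the adjunction morphism $\bar{\Q}_{\ell}[d] \to f'_*\text{IC}_{f^{-1}(U)}$ arising from the map (\ref{natural_map_constant_to_intersection_complex}) on $f^{-1}(U)$. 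The standard compatibility of Verdier duality with the $(f'^*,f'_*)$ and $(f'_!,f'^!)$ adjunctions now says that the transpose under $(f'_!,f'^!)$ of the Verdier dual of a morphism is exactly the Verdier dual of its transpose under $(f'^*,f'_*)$. Applied here, this identifies $f^!|_{f^{-1}(U)}$ with the Verdier dual of the natural map $\bar{\Q}_{\ell}[d] \to \text{IC}_{f^{-1}(U)}$, which is precisely the assertion of the corollary.

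The construction is purely formal once Corollary \ref{dual_map_intersection_cohomology} is available, so there is no real obstacle; the only care needed is the bookkeeping involved in tracking the two adjunctions and the implicit Tate twist in the self-duality of $\text{IC}$, neither of which interferes with the existence of the morphism. Alternatively, one could start from Corollary \ref{adjunction_map_intersection_cohomology} and apply Verdier duality directly via $\mathbb{D}f^* \simeq f^!\mathbb{D}$, together with the self-duality of $\text{IC}_X$; this gives the same map, but routes through self-duality of $\text{IC}$ rather than through a direct adjunction, and I find the route via $f^{\vee}_*$ cleaner.
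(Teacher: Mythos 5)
Your proposal is correct and matches the paper's intent: the paper gives no argument beyond the phrase ``Dualizing we also have the following result,'' and you supply exactly the formal content behind that, using the $(f_!,f^!)$ adjunction on $f^{\vee}_*$, which by the standard compatibility of Verdier duality with the $(f^*,f_*)$ and $(f_!,f^!)$ adjunctions is the same map as the Verdier dual of Corollary \ref{adjunction_map_intersection_cohomology}. Your verification of the restriction to $f^{-1}(U)$ via Diagram (\ref{pullback_intersection_cohomology_-5}) is accurate, and the closing remark about Tate twists is exactly the kind of bookkeeping one needs to keep in mind.
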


\begin{rmk}\label{Deligne's_decomposition}
Note that compatibility of $f_*|_{U}$ with (\ref{pullback_intersection_cohomology_-4}) is independent of the choice of an isomorphism in (\ref{Deligne_Decomposition}).~Since the argument only depends on vanishing of $f_{*}^{'i}$ for $i \neq 0$,~and all the choices lead to this vanishing.\end{rmk}

In the rest of this section, we assume that $X_0$ (and hence $Y_0$) is projective over $k_0$.

The morphism of sheaves

\begin{equation}\label{action_top_cohomology_1_1}
f_* \colon \text{IC}_X \to f_* \text{IC}_Y,
\end{equation}

\noindent constructed in Proposition \ref{pullback_intersection_cohomology} induces a linear map

\begin{equation}\label{action_top_cohomology_-1}
f^* \colon H^i(X,\text{IC}_X) \to H^i(Y,\text{IC}_Y).
\end{equation}

By properness of $f$ we also have an action on the compactly supported cohomology

\begin{equation}\label{action_top_cohomology_-3}
f^{'*}_c \colon H_c^i(U,\text{IC}_U) \to H_c^i(f^{-1}(U),\text{IC}_{f^{-1}(U)}).
\end{equation}

Moreover, we have a diagram of cohomology groups

\begin{equation}\label{pullback_compactly_supported}
\xymatrix{ H^{i+d}_c(U,\bar{\Q}_{\ell}) \ar[d]^-{f^*} \ar[r]^-{\cong} & H^{i}_c(U,\text{IC}_U) \ar[r] \ar[d]^-{f^{'*}_c}_-{(\ref{action_top_cohomology_-3})} & H^i(X,\text{IC}_X) \ar[d]^-{f^*}_-{(\ref{action_top_cohomology_-1})} \\ 
      H^{i+d}_c(f^{-1}(U),\bar{\Q}_{\ell}) \ar[r]& H^{i}_c(f^{-1}(U),~\text{IC}_{f^{-1}(U)}) \ar[r] & H^i(Y,\text{IC}_Y) }.
\end{equation}

The square in the left of the Diagram \ref{pullback_compactly_supported} is commutative as a result of (\ref{pullback_intersection_cohomology_-5}).~The square on the right commutes by the functoriality of the adjunction $j_!j^* \to 1$.~When $i=d$,~the Diagram \ref{functoriality_trace_map} implies that all the row maps in the Diagram \ref{pullback_compactly_supported} are isomorphisms.
%
%

We also have a commutative diagram of usual cohomology groups

\begin{equation}\label{pullback_compactly_supported_1}
\xymatrix{ \bar{\Q}_{\ell} \ar[d]_-{\text{deg}(f)}  \ar[r]^-{\cong}_-{\text{Tr}_X^{-1}} & H^{2d}(X,\bar{\Q}_{\ell})  \ar[d]^-{f^*} & H^{2d}_c(U,\bar{\Q}_{\ell}) \ar[l]_-{\cong} \ar[d]^-{f^*}  \\ 
               \bar{\Q}_{\ell} & H^{2d}(Y,\bar{\Q}_{\ell})  \ar[l]_-{\cong}^-{\text{Tr}_Y} &  H^{2d}_c(f^{-1}(U),\bar{\Q}_{\ell}) \ar[l]_-{\cong} }.
\end{equation}

Here $\text{Tr}_X$ and $\text{Tr}_Y$ are the trace maps on the top cohomology (\ref{our_definition_trace}).~Finally combining the Diagrams \ref{pullback_compactly_supported} and \ref{pullback_compactly_supported_1} we have the following result.

\begin{lem}\label{action_top_cohomology}
The following diagram is commutative 

\begin{equation}\label{action_top_cohomology_1}
\xymatrix{\bar{\Q}_{\ell} \ar[d]_-{\text{deg}(f)}  \ar[r]^-{\cong}_-{\text{Tr}_X^{-1}} & H^{2d}(X,\bar{\Q}_{\ell}) \ar[r]^-{\cong}_-{(\ref{triangles_constant_intersection_1})} \ar[d]^-{f^*} & H^d(X,\text{IC}_X) \ar[d]^-{f^*}_-{(\ref{action_top_cohomology_-1})} \ar[d]^-{f^*} \\ 
              \bar{\Q}_{\ell} & H^{2d}(Y,\bar{\Q}_{\ell})  \ar[l]_-{\cong}^-{\text{Tr}_Y} & H^d(Y,\text{IC}_Y) \ar[l]_-{\cong}^-{(\ref{triangles_constant_intersection_1})}}.
\end{equation}

\end{lem}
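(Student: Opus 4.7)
The plan is to deduce the lemma by pasting together the two diagrams \eqref{pullback_compactly_supported} and \eqref{pullback_compactly_supported_1} established immediately before the statement. Since $X_0$ and $Y_0$ are proper over $k_0$, compactly supported cohomology coincides with ordinary cohomology throughout, so the cohomology groups appearing in \eqref{action_top_cohomology_1} match those produced by the two input diagrams. First I would fix a non-empty open subset $U_0 \subseteq X_0$ contained in the regular locus of $X_0$ (such a $U_0$ exists since $X_0$ is geometrically integral and of finite type), and base-change to obtain $U \subseteq X$; the Cartesian square \eqref{pullback_intersection_cohomology_-1} supplies the corresponding $f^{-1}(U) \subseteq Y$.

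For the right square of \eqref{action_top_cohomology_1}, I specialize \eqref{pullback_compactly_supported} to $i = d$. Its top and bottom rows then become isomorphisms: the leftmost horizontal arrows are isomorphisms because $U$ is open dense in the proper $X$ (and likewise for $f^{-1}(U) \subseteq Y$), and the middle horizontal arrows are the isomorphism \eqref{triangles_constant_intersection_1} at degree $d$. Consequently the composition of the top row is precisely \eqref{triangles_constant_intersection_1} for $X$, and similarly for the bottom row and $Y$, so pasting the two commutative squares of \eqref{pullback_compactly_supported} yields the right square of \eqref{action_top_cohomology_1}. For the left square, I simply invoke \eqref{pullback_compactly_supported_1}, whose left square is literally the one I need. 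Stacking these commutative rectangles along the common vertical arrow $f^\ast \colon H^{2d}(X,\bar{\Q}_{\ell}) \to H^{2d}(Y,\bar{\Q}_{\ell})$ finishes the proof.

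All of the real content therefore sits in the two input diagrams, each of which is handled earlier in the section: the right square of \eqref{pullback_compactly_supported} is functoriality of the adjunction $j_! j^\ast \to \mathrm{id}$; its left square is \eqref{pullback_intersection_cohomology_-5}, which is built into the construction of $f_\ast$ given in Proposition \ref{pullback_intersection_cohomology}; and \eqref{pullback_compactly_supported_1} is the classical identity $\mathrm{Tr}_Y \circ f^\ast = \deg(f) \cdot \mathrm{Tr}_X$ on top \'etale cohomology for a generically finite dominant proper morphism between $d$-dimensional varieties, consistent with our definition \eqref{our_definition_trace} of $\mathrm{Tr}_X$ because the latter agrees with the usual trace on the regular open $U$ via \eqref{functoriality_trace_map}. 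The only obstacle is bookkeeping — making sure the two middle columns match — rather than any substantive new argument.
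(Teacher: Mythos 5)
Your proposal is correct and matches the paper's intent exactly: the paper presents Lemma \ref{action_top_cohomology} as an immediate consequence of pasting diagrams \eqref{pullback_compactly_supported} (at $i=d$) and \eqref{pullback_compactly_supported_1}, which is precisely your argument. One minor slip in the write-up: in your discussion of the rows of \eqref{pullback_compactly_supported} you have the justifications reversed — the leftmost horizontal arrows are the isomorphisms \eqref{triangles_constant_intersection_1}, while it is the middle arrows $H^{d}_c(U,\text{IC}_U)\to H^d(X,\text{IC}_X)$ that are isomorphisms because $U$ is dense open in the proper variety $X$ (via the middle row of \eqref{functoriality_trace_map}) — but this does not affect the validity of the proof.
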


We can also dualize the arguments above.~The morphism (\ref{dual_map_intersection_cohomology_1}) induces a pushforward on the intersection cohomology groups

\begin{equation}\label{pushforward_cohomology_all}
f_* \colon H^{i}(Y,\text{IC}_Y) \to H^{i}(X,\text{IC}_X).
\end{equation}

As in Lemma \ref{action_top_cohomology} we would like to understand this action when $i=d$.~Note that by construction

\begin{equation}\label{pushforward_cohomology}
f_* \colon H^{d}(Y,\text{IC}_Y) \to H^{d}(X,\text{IC}_X).
\end{equation}

 \noindent is dual to the pullback map

\begin{equation}\label{pushforward_cohomology_dual'}
f^* \colon H^{-d}(X,\text{IC}_X) \to H^{-d}(Y,\text{IC}_Y),
\end{equation}

\noindent induced by (\ref{action_top_cohomology_1_1}).~Moreover restricting (\ref{action_top_cohomology_1_1}) to $U$,~and we obtain pullback maps

 \begin{equation}\label{pushforward_cohomology_dual}
f^{'*} \colon H^{i}(U,\text{IC}_U) \to H^{i}(f^{-1}(U),\text{IC}_{f^{-1}(U)}).
\end{equation}

As before the commutative diagram (\ref{pullback_intersection_cohomology_-5}) and functoriality of the adjunction $1 \to j_*j^*$ gives rise to a commutative diagram

\begin{equation}\label{pullback_usual}
\xymatrix{ H^{i+d}(X,\bar{\Q}_{\ell}) \ar[r] \ar[d]^-{f^*} & H^{i+d}(U,\bar{\Q}_{\ell}) \ar[d]^-{f^*} \ar[r]^-{\cong} & H^{i}(U,\text{IC}_U) \ar[d]^-{f^{'*}}_-{(\ref{pushforward_cohomology_dual})} & H^i(X,\text{IC}_X) \ar[d]^-{f^*}_-{(\ref{action_top_cohomology_-1})} \ar[l]  \\ 
       H^{i+d}(Y,\bar{\Q}_{\ell}) \ar[r]  & H^{i+d}(f^{-1}(U),\bar{\Q}_{\ell}) \ar[r] & H^{i}(f^{-1}(U),~\text{IC}_{f^{-1}(U)}) \ar[r] & H^i(Y,\text{IC}_Y) \ar[l] }.
\end{equation}

In particular when $i=-d$,~the Diagram \ref{functoriality_trace_map_dual} implies that all the row maps in the Diagram \ref{pullback_usual} are isomorphisms.~Thus we get a commutative diagram 

\begin{equation}\label{pullback_usual_2}
\xymatrix{\bar{\Q}_{\ell} \ar[r]^-{\cong} \ar@{=}[d] & H^{0}(X,\bar{\Q}_{\ell}) \ar[r]^-{\cong}_-{(\ref{top_cohomology_pushforward_dual})} \ar[d]^-{f^*} & H^{-d}(X,\text{IC}_X) \ar[d]^-{f^*}_-{(\ref{pushforward_cohomology_dual'})} \ar[d]^-{f^*} \\ 
              \bar{\Q}_{\ell} \ar[r]^-{\cong} & H^{0}(Y,\bar{\Q}_{\ell})  & H^{-d} (Y,\text{IC}_Y) \ar[l]_-{\cong}^-{(\ref{top_cohomology_pushforward_dual})}}.
\end{equation}

Dualizing the Diagram \ref{pullback_usual_2} we get a commutative diagram

\begin{equation}\label{pullback_usual_3}
\xymatrix{ \bar{\Q}_{\ell}  & H^{0}(X,K_X) \ar[l]_-{\cong}^-{\text{Tr}_X} & H^{d}(X,\text{IC}_X) \ar[l]_-{\cong}^-{(\ref{top_cohomology_pushforward})} \\
               \bar{\Q}_{\ell} \ar@{=}[u] &  H^{0}(Y,K_Y) \ar[l]_-{\cong}^-{\text{Tr}_Y} \ar[u]^-{f_*} &  H^{d}(Y,\text{IC}_Y) \ar[l]_-{\cong}^-{(\ref{top_cohomology_pushforward})} \ar[u]^-{f_*}_-{(\ref{pushforward_cohomology})} }.
\end{equation}

Here $f_* \colon H^0(Y,K_Y) \to H^0(X,K_X)$ is induced by the adjunction $f_*f^! \to 1$ and is dual to the pullback map $f^* \colon H^0(X,\bar{\Q}_{\ell}) \to H^0(Y,\bar{\Q}_{\ell})$.~Thus combining Diagrams \ref{pullback_usual_3} and \ref{functoriality_trace_map} we have a commutative diagram  

\begin{equation}\label{pushforward_trace_compatible}
\xymatrix{ H^{2d}(Y,\bar{\Q}_{\ell})  \ar@/^5pc/[rrd]^-{\text{Tr}_Y}_-{(\ref{our_definition_trace})} \ar[r]^-{\cong}_{(\ref{triangles_constant_intersection_1})} & H^{d}(Y,\text{IC}_Y) \ar[dd]^-{f_*}_-{(\ref{pushforward_cohomology})} \ar[dr]^-{\cong} & \\
                  &        & \bar{\Q}_{\ell} \\
                  H^{2d}(X,\bar{\Q}_{\ell}) \ar[r]^-{\cong}_-{(\ref{triangles_constant_intersection_1})} \ar@/_5pc/[rru]_-{\text{Tr}_X}^-{(\ref{our_definition_trace})} & H^d(X,\text{IC}_X) \ar[ur]^-{\cong} &} .
\end{equation}

The Diagrams \ref{action_top_cohomology_1} and \ref{pushforward_trace_compatible} immediately imply the following proposition.

\begin{prop}\label{pushforward_pullback_intersection_cohomology}
Let $f_0,g_0$ be two dominant morphisms from $X_0$ to  $Y_0$.~Let $f^* \colon H^{d}(X,\text{IC}_X) \to H^d(Y,\text{IC}_Y)$ and $g_* \colon H^{d}(Y,\text{IC}_Y) \to H^{d}(X,\text{IC}_X)$ be the linear maps as defined in (\ref{action_top_cohomology_-1}) and (\ref{pushforward_cohomology}).~Then 
\begin{equation}\label{pushforward_pullback_intersection_cohomology_1}
g_* \circ f^*=\text{deg}(f),
\end{equation}
as endomorphisms of the one dimensional $\bar{\Q}_{\ell}$ vector space $H^{d}(X,\text{IC}_X)$.
\end{prop}


\begin{thebibliography}{PTW02}
\bibliographystyle{amsalpha}

\bibitem[Ame11]{Amerik} Amerik,~E.: {\it Existence of non-preperiodic algebraic points for a rational self-map of infinite order \it}.~ Math. Res. Lett. 18(2) (2011), 251–256.

\bibitem[SGA4]{SGA4}  Artin,~M.,~Grothendieck,~A.~,~Verdier~J.~L.~et al.: {\it Th\'eorie des topos et cohomologie \'etale des Sch\'emas}.~Tome 3,~Lecture Notes in Mathematics 305,~Springer-Verlag, 1973.

\bibitem[SGA5]{SGA5} Edit\'e par Luc Illusie.: {\it Cohomologie l-adique et fonctions L}.~Lecture Notes in Mathematics 589,~Springer-Verlag, 1965.

\bibitem[BBFGK95]{Gabber_pull_back} Barthel,~G.,~Brasselet,~J.-P.,~Fieseler,~K.-H.,~Gabber,~O.,~Kaup,~L.: {\it Rel\`evement de cycles alg\'ebriques et homomorphismes associ\'es en homologie d'intersection}.~Ann. of Math. (2) 141 (1995),~147-179.

\bibitem[BBDG18]{BBD}  Beilinson,~A.,~Bernstein,~J.,~Deligne~P.,~Gabber~O: {\it Faisceaux pervers}.~Astérisque Société Mathématique de France (2018).

\bibitem[BS14]{Bhatt_Scholze} Bhatt,~B.,~Scholze,~P.: {\it The pro-\'etale topology of schemes}. arXiv preprint arXiv:1309.1198.

\bibitem[BS05]{Borisov_Sapir} Borisov,~A.,~Sapir,~M.: {\it Polynomial maps over finite fields and residual finiteness of mapping tori of group endomorphisms}. Invent. Math. 160 (2005), no. 2, 341-356.

\bibitem[Con07]{Con07} Conrad, B.: {\it Deligne's notes on Nagata compactifications}. Journal-Ramanujan Mathematical Society 22, no. 3 (2007): 205.

\bibitem[Del74]{Del74} Deligne,~P.: {\em La conjecture de Weil. I},~Inst. Hautes \'Etudes Sci. Publ. Math.,~43 (1974),~273-307.

\bibitem[Del77]{Deligne_SGA4.5} Deligne, P.:  {\it SGA 4.5}. Lecture Notes in Math (1977), 569.

\bibitem[Del80]{Deligne_Weil_II}  Deligne,~P.: \emph{La conjecture de Weil. II}, Publ. Math. IHES, 52(1) (1980), 137-252.

\bibitem[Del94]{Deligne_Decomposition} Deligne, P.: {\it D\'ecompositions dans la cat\'egorie d\'eriv\'ee}. In Proc. Symp. Pure Math, vol. 55, no. 1, pp. 115-127. 1994.

 \bibitem[EGAII]{EGAII} Grothendieck,~A.:  {\it \'El\'ements de g\'eom\'etrie alg\'ebrique: II. \'Etude globale \'el\'ementaire de quelques classes de morphismes}.~Inst. Hautes \'Etudes Sci. Publ. Math.,~8 (1961),~5-222.

\bibitem[EM10]{Esnault_Mehta} Esnault,~H.,~Mehta,~V.: {\it Simply connected projective manifolds in characteristic $p>0$ have no nontrivial stratified bundles}. Invent. Math. 181 (2010), no. 3, 449-465.

\bibitem[ESB16]{Esnault_Srinivas_Bost} Esnault,~H.,~Srinivas,~V.,~Bost,~J.~B.: {\it Simply connected varieties in characteristic $p> 0$}. Compositio Mathematica, 152(2) (2016), pp.255-287.

\bibitem[Fak03]{Fakhruddin} Fakhruddin,~N.: {\it Questions on self maps of algebraic varieties}. J. Ramanujan Math. Soc. 18 (2003), no. 2, 109-122.

\bibitem[FK88]{Freitag_Kiehl}  Freitag,~E.,~Kiehl,~R : {\it Etale Cohomology and the Weil Conjecture}. Ergebnisse der Mathematik und ihrer Grenzgebiete. 3. Folge/A Series of Modern Surveys in Mathematics,~1988.

\bibitem[Fuj97]{Fujiwara} Fujiwara, K.: {\it Rigid geometry, Lefschetz-Verdier trace formula and Deligne's conjecture}. Inventiones mathematicae, 127(3) (1997), 489-533.
 
\bibitem[Hru12]{Hrushovski} Hrushovski, E.: {\it The elementary theory of the Frobenius automorphism}. Preprint dated 24th July,~2012 available at \url{http://www.ma.huji.ac.il/~ehud/FROB.pdf}.~An earlier version is available at arXiv: math/0406514 (2004).

 \bibitem[HS06]{Hanamura_Saito} Hanamura,~M.,~Saito,~M.: {\it Weight filtration on the cohomology of algebraic varieties} arXiv preprint math/0605603.
  
 \bibitem[Ill77]{Illusie}  Illusie,~L.: {\it Formule de Lefschetz, in Cohomologie $\ell$-adique et fonctions L, SGA 5}. Lecture Notes in Mathematics (1977), 589.
 
 \bibitem[LW54]{Lang_Weil} Lang,~S.,~Weil,~A.: {\it Number of points of varieties in finite fields}. American Journal of Mathematics 76, no. 4 (1954): 819-827.
 
 \bibitem[Pin92]{Pink} Pink,~R.: {\it On the calculation of local terms in the Lefschetz-Verdier trace formula and its application to a conjecture of Deligne}. Ann. of Math.~135 (1992),~no. 3, 483?525.
  
\bibitem[Var07]{Varshavsky} Varshavsky, Y.: {\it Lefschetz-Verdier trace formula and a generalization of a theorem of Fujiwara}. GAFA Geometric And Functional Analysis, 17(1) (2007), 271-319.
 
\bibitem[Var18]{Varshavsky_Hrushovski} Varshavsky, Y.: {\it Intersection of a correspondence with a graph of Frobenius \it}. J. Algebraic Geom. 27 (2018), 1-20.

\bibitem[Web99]{Weber_1} Weber,~A.: {\it A morphism of intersection homology and hard Lefschetz}. Contemp. Math. 241,~(1999), 339-348.

\bibitem[Web04]{Weber_2} Weber,~A.: {\it Pure homology of algebraic varieties}. Topology 43, (2004), 635-644.

\end{thebibliography}
\end{document}